\DeclareMathOperator{\Hom}{Hom}
\DeclareMathOperator{\Pic}{Pic}
\DeclareMathOperator{\Aut}{Aut}
\DeclareMathOperator{\modf}{mod}
\DeclareMathOperator{\Ob}{Ob}
\newcommand{\smod}{\underline{\modf}}
\newcommand{\A}{\mathbb A}
\newcommand{\D}{\mathbb D}
\newcommand{\Db}{\mathcal D^b}
\newcommand{\Dc}{\mathcal D}
\newcommand{\E}{\mathbb E}
\newcommand{\Mcl}{\mathcal M}
\newcommand{\Pc}{\mathcal P}
\newcommand{\T}{\mathcal T}
\newcommand{\Z}{\mathbb Z}
\newtheorem{definition}{Definition}[section]
\newtheorem{theorem}[definition]{Theorem}
\newtheorem{corollary}[definition]{Corollary}
\newtheorem{proposition}[definition]{Proposition}
\theoremstyle{definition}
\newtheorem{example}[definition]{Example}
\tikzset{vertex/.style={circle,fill=black,inner sep=1pt,outer sep=2pt},
         mvertex/.style={rectangle,draw=black,thick,inner sep=2pt,outer sep=2pt},
         tvertex/.style={inner sep=1pt,font=\scriptsize},
         unvertex/.style={circle,fill=white,draw=white,inner sep=1pt},
         bvertex/.style={circle,fill=white,draw=white,inner sep=1pt, font=\scriptsize},
         fill1/.style={fill=black!15,draw=black!15},
         fill2/.style={fill=black!30,draw=black!30},
         fill12/.style={fill=black!45,draw=black!45},
         leadsto/.style={-angle 90,decorate,decoration=snake,very thick},
         cut/.style={decorate,decoration=saw,very thick}}
\newcommand{\replacevertex}[3][fill=white,draw=white]
 {
  \node at #2 [#1,circle,inner sep=1pt] {};
  \node #2 at #2 #3;
 }
\begin{document}
\title[Realizing orbit categories as stable module categories]{Realizing orbit categories as stable module categories - a complete classification}
\author[B. Grimeland]{Benedikte Grimeland}
\address{Department of Teacher Education\\Norwegian University of Science and Technology\\7491 Trondheim}
\email{benedikte.grimeland@ntnu.no}
\author[K. M. Jacobsen]{Karin M.\ Jacobsen}
\address{Department of Mathematical Sciences\\Norwegian University of Science and Technology\\7491 Trondheim}
\email{karin.marie.jacobsen@math.ntnu.no}
\date{}
\begin{abstract}
We classify all triangulated orbit categories of path-algebras of Dynkin diagrams that are triangle equivalent to a stable module category of a representation-finite self-injective standard algebra. For each triangulated orbit category \(\T\) we give an explicit description of a representation-finite self-injective standard algebra with stable module category triangle equivalent to \(\T\). 
\keywords{Representation Theory, Self-injective algebras, Orbit categories, Stable translation quivers}
\end{abstract}
\maketitle



\section{Introduction}
Let \(k\) be an algebraically closed field. In this paper we will focus on two types of triangulated categories with finitely many isomorphism classes of indecomposable objects: triangulated orbit categories of bounded derived categories of path algebras of Dynkin quivers of type \(\A,\D\) and \(\E\), and stable module categories of representation-finite self-injective algebras of Dynkin tree type. The triangulated categories we consider are \(\Hom\)-finite.

It is well-known that the stable module category of a self-injective algebra is a triangulated category. Riedtmann showed in \cite{riedtmannTysk} that all connected stable components of the AR-quiver of a representation-finite algebra are of Dynkin tree type. In two subsequent papers by Riedtmann \cite{riedtmannA} and Bretschner, L{\"a}ser and Riedtmann \cite{RmVenner}, a complete classification of all representation-finite self-injective algebras of Dynkin type is given in terms of their quivers with relations. 
Continuing their work, Asashiba gives an invariant under derived equivalence for representation-finite self-injective algebras, based on the shape of the AR-quiver \cite{asashiba1}\cite{asashiba2}, called the type of the algebra. Standard algebras of one type are stably equivalent, as well as derived equivalent. He also determines which types contain standard algebras.

Triangulated orbit categories have been well studied, see e.g.\ \cite{bmrrt}, \cite{ccs} and \cite{kellerTriOrb}. The orbit category of a triangulated category is not necessarily triangulated itself. However Keller showed that the orbit category \(\mathcal D^b(H)/F\) is triangulated for \(H\) a hereditary algebra, \(\Db(H)\) the bounded derived category of \(\modf A\), and with certain restrictions on the functor \(F\) \cite{kellerTriOrb}. In the case where \(F=\tau^{-1}[m-1]\) for \(m\in\mathbb N\), the orbit category \(\mathcal D^b(H)/F\) is known as the \(m\)-cluster category \(\mathcal C_m(H)\). The Calabi-Yau dimension of \(\mathcal C_m(H) \) is \(m\).

Keller and Reiten proved in \cite{KelReitMorita} that an algebraic triangulated category with Calabi-Yau dimension \(m\) that contains an \((m-1)\)-cluster tilting object \(T\) with a hereditary endomorphism algebra \(H\) such that \(\Hom (T,\Sigma^{-i}T)=0\) for \(i=0,\ldots, m-2\) is triangle equivalent to the \(m\)-cluster category \(\mathcal C_m(H)\). 

More recently in \cite{dugas}, Dugas was able to determine the Calabi-Yau dimension to some of the stable module categories of representation-finite self-injective algebras.

The theorem of Keller and Reiten, combined with the Calabi-Yau dimensions calculated by Dugas, was used by Holm and J{\o}rgensen \cite{holmJ} to classify which stable module categories of self-injective algebras are triangle equivalent to an \(m\)-cluster category. 

Another approach has been to use Galois coverings to study triangle equivalences between triangulated categories with some finiteness condition: 

Xiao and Zhu show in \cite{XiaoZhu}  that if \(\T\) is a locally finite triangulated category, its Auslander-Reiten quiver is of the form \(\Z\Delta/G\), where \(\Delta\) is a Dynkin diagram and \(G\) is an automorphism group of \(\Z\Delta\). For most quivers \(\Z\Delta/G\) they also give triangulated categories  where \(\Z\Delta/G\) is the Auslander-Reiten quiver. In doing this, they show that the orbit categories of the form \(\mathcal D^b(k\A_n)/\tau^m\) are equivalent to stable categories. 

In \cite{amiot}, Amiot reproves Xiao and Zhu's statement for categories with finitely many equivalence classes of indecomposable objects. Amiot also shows \cite[Thm.\ 7.2]{amiot} that any finite, standard, connected, algebraic, triangulated category is triangle equivalent to \(\Db( k\Delta)/\Phi\) for some Dynkin diagram \(\Delta\) and autoequivalence \(\Phi\).
This result reduces the problem of finding triangle equivalences between triangulated categories to finding isomorphisms between translation quivers. 

Using the result from Amiot, we classify all triangulated orbit categories of path algebras of Dynkin diagrams that are triangle equivalent to the stable module category of a representation-finite standard self-injective algebra. 
The orbit categories we consider are all of standard type. We cannot have an equivalence between a category of standard type and one of non-standard type, so we only need to consider self-injective algebras of standard type.

In Sections 2-5, we give an overview of the theory required for our result. In particular, section 4 contains a corollary to Amiot's theorem \cite[Thm.\ 7.2]{amiot} that is the basis for our main result.

Sections 6-8 contain the calculations, examples and results for Dynkin type \(\A, \D, \E\). The results are summed up in Section 9, where the main theorem is stated:

\begin{theorem}
Let \(\Delta\) be a Dynkin diagram and let \(\Phi\) be an autoequivalence such that \(\Db(k\Delta)\!/\Phi\) is triangulated. Let \(\Lambda\) a self-injective algebra.
The orbit category \(\mathcal C=\Db(k\Delta)\!/\Phi\) is triangle equivalent to \(\smod \Lambda\) exactly in the cases described in table \ref{SummaryTableInt}.
\begin{table}[h!tb]
\centering
\begin{tabular}{|>{$}l<{$}>{$}l<{$}|l|l|}
\hline
\multicolumn{2}{|l|}{\(\mathcal{C}\)} 		& \(\Lambda\) & Sec.\!\\\hline
\Db(k\A_r)/\tau^w			&r\geq 1, w\geq 1 												& Nakayama alg.\ \(N_{w,r+1}\)\! & \ref{Nakayama}\\\hline
\Db(k\A_r)/\tau^w\phi	 	&\begin{aligned}r&=2l+1,\, l\geq 1 \\ w&=rv,\, r\geq 1\end{aligned} 	& M\"obius alg.\ \(M_{l,v}\) 	&\ref{Mobius} \\\hline
\Db(k\D_r)/\tau^w	 		&r\geq 4, w=s(2r-3), s\geq 1 				& \(D_{n,s,1}\) 		& \ref{Drot1} \\\hline
\Db(k\D_r)/\tau^w\phi	 	&r\geq 4, w=s(2r-3), s\geq 1				& \(D_{n,s,2}\)		& \ref{Drot2} \\\hline
\Db(k\D_4)/\tau^{5w}\rho 	&w\geq 1									& \(D_{4,s,3}\)		& \ref{Drot3} \\\hline
\Db(k\D_r)/\tau^w	 		&\begin{aligned}r&=3m, m\geq 2\\ w&=s(2r-3)/3, s\geq 1, 3\nmid s\end{aligned}  \!	& \(D_{3m,\frac{s}{3},1}\) & \ref{D3m}\\\hline
\Db(k\E_r)/\tau^w	 		&\begin{aligned}	
						r=6 \text{ and }& w=11s \\ 
						r=7 \text{ and }& w=17s  \hspace{5pt}s\geq 1 \\
						r=8 \text{ and }& w=29s \\ \end{aligned} 	 & \(E_{r, s, 1}\) & \ref{E678}\\\hline
\Db(k\E_6)/\tau^w\phi 		& w=11s, s\geq 1 & \(E_{6,s,2}\) 	& \ref{E6twist} \\\hline
\end{tabular}
\caption{The cases up to triangulated equivalence where \(\mathcal C=\Db(k\Delta)/\Phi\) is triangle equivalent to \(\smod \Lambda\). The definitions of the self-injective algebras \(\Lambda\)  are stated in the sections as listed.}
\label{SummaryTableInt}
\end{table}
\end{theorem}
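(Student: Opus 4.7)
The plan is to invoke the corollary of Amiot's theorem stated in Section 4: any finite, connected, standard, algebraic triangulated category is determined up to triangle equivalence by its Auslander-Reiten quiver, viewed as a stable translation quiver. Both \(\smod \Lambda\) (for \(\Lambda\) representation-finite self-injective of standard type) and \(\Db(k\Delta)/\Phi\) satisfy these hypotheses, so the question reduces to a combinatorial one: for which parameters do the AR-quivers on the two sides agree as quotients of some \(\Z\Delta\) by an admissible group of translation-quiver automorphisms?

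The first step is to put both sides uniformly in the form \(\Z\Delta/G\). On the orbit-category side, any admissible \(\Phi\) is, up to isomorphism of autoequivalences of \(\Db(k\Delta)\), of the form \(\tau^{w}\psi\) where \(\psi\) is either the identity, a reflection \(\phi\) (available in types \(\A_{2l+1}\), \(\D_{r}\) and \(\E_{6}\)), or the triality \(\rho\) of \(\D_{4}\); the induced automorphism of \(\Z\Delta\) generates \(G\). On the stable-module-category side, Riedtmann's and Asashiba's classification of representation-finite self-injective standard algebras directly gives the AR-quiver of \(\smod\Lambda\) as \(\Z\Delta/G_{\Lambda}\), where \(G_{\Lambda}\) can be read off from Asashiba's type invariant. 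The theorem therefore becomes a question of matching subgroups of \(\Aut(\Z\Delta)\).

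The bulk of the work is a case analysis by Dynkin type, to be carried out in Sections~6--8. In each case I would enumerate the admissible pairs \((w,\psi)\) and compare the corresponding \(G\) with the groups realized by the algebras in Asashiba's list. In type \(\A_{r}\) the untwisted orbits match the Nakayama algebras \(N_{w,r+1}\), and the \(\phi\)-twisted orbits, available for \(r=2l+1\) and \(w=rv\), match the Möbius algebras \(M_{l,v}\). In type \(\D_{r}\) the generic cases match \(D_{n,s,1}\) and \(D_{n,s,2}\), while the sporadic \(\D_{4}\)-triality produces the family \(D_{4,s,3}\); a separate argument handles \(r=3m\) with \(3\nmid s\), where a translation power coincides with the composition of a smaller translation and a diagram automorphism. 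In type \(\E\) only the Coxeter-like exponents \(11\), \(17\), \(29\) for \(\E_{6},\E_{7},\E_{8}\) yield admissible free actions, producing \(E_{r,s,1}\); the \(\phi\)-twist in \(\E_{6}\) contributes \(E_{6,s,2}\). In each direction, one must both exhibit the stated \(\Lambda\) and verify that no other orbit category yields a self-injective stable category.

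The main obstacle I expect is the treatment of the exceptional symmetries. The triality of \(\D_{4}\) is a genuine order-three phenomenon that lies outside the pattern set by the other \(\D_{r}\) and must be treated by direct inspection of \(\Z\D_{4}/\langle\tau^{5w}\rho\rangle\). The \(\D_{3m}\) identification relies on the subtle combinatorial coincidence that \(\tau^{s(2r-3)/3}\) descends to the same quotient as a different translation-plus-automorphism only when \(3\mid r\); a priori this case could be missed, leaving the classification incomplete. The twist in \(\E_{6}\) is also delicate since \(\phi\) does not commute neatly with \(\tau\). Once these three exceptional comparisons are reduced to explicit identifications of stable translation quivers and verified, the remaining entries of the table follow by direct comparison of the orders of \(\tau\)-orbits and Nakayama permutations against Asashiba's data.
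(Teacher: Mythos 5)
Your proposal follows essentially the same route as the paper: reduce to a comparison of AR-quivers via the corollary to Amiot's theorem, use Asashiba's exhaustive list of types of standard representation-finite self-injective algebras to guarantee that no cases are missed, and then match translation quivers $\Z\Delta/G$ case by case across the Dynkin types, treating the $\D_4$ triality, the $\D_{3m}$ frequency-$\frac{s}{3}$ family, and the $\E_6$ twist separately. The only point worth making explicit (which the paper does in the introduction) is that non-standard algebras are excluded because a standard triangulated category cannot be equivalent to a non-standard one.
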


\section{Translation quivers, mesh categories and automorphism groups}
Translation quivers can be seen as an abstraction of the properties of AR-quivers. They are central in Riedtmann's classification of all self-injective algebra of Dynkin type \(\A\), \(\D\) and \(\E\). They are also related to the derived category, as we will see in theorem \ref{meshDeriv}. 
Background on translation quivers can be found in \cite{happel} and \cite{ARS}, from which we recall the following central definitions and results. 
\begin{definition}
We define a quiver \(Q=(Q_0,Q_1,s,t)\) to consist of a set of vertices \(Q_0\), a set of arrows \(Q_1\), a source map \(s\) and a target/sink map \(t\). 
\begin{description}[font=\bfseries]
\item[\(x^-\) and \(x^+\)] For a  vertex \(x\in Q_0\) we denote by \(x^-\) the set of direct predecessors of \(x\) in \(Q\), and by \(x^+\) the set of direct successors of \(x\) in \(Q\).
\item[Locally finite quiver] A quiver \(Q\) is called locally finite if for each \(x\in Q_0\) the sets \(x^-\) and \(x^+\) are finite.
\item[Translation quiver] Let \(\theta\) be an injective map from a subset of \(Q_0\) to \(Q_0\). The pair \((Q,\theta)\) is called a translation quiver if the following is satisfied:
	\begin{enumerate}
	\item \(Q\) has no loops and no multiple arrows
	\item For \(x\in Q_0\) such that \(\theta(x)\) is defined, we have that \(x^-=\theta(x)^+\)
	\end{enumerate}
	The map \(\theta\) is called the translation of the translation quiver \((Q,\theta)\).
	
	For \(x\in Q_0\) such that \(\theta(x)\) is defined, we can define a map \(\sigma\) on the arrows going into \(x\), by setting \(\sigma(z\rightarrow x)=(\theta(x)\rightarrow z)\). This map is a bijection on the respective sets of arrows.
\item[Stable translation quiver] A translation quiver \((Q,\theta)\) is called stable if \(\theta:Q_0\rightarrow Q_0\) is a bijection.
\item[Morphism of translation quivers]  Given two translation quivers \((Q,\theta)\) and \((Q^{'}\!,\theta^{'}\!)\), we define a morphism \(f:(Q,\theta)\rightarrow (Q^{'},\theta^{'})\) as a pair of maps \(f_0:Q_0\rightarrow Q^{'}_0\) and \(f_1:Q_1\rightarrow Q^{'}_1\) such that 
	\begin{itemize}
	\item if \(\alpha\in Q_1\), and \(\alpha:x\rightarrow y\) then \(f_1(\alpha)\in Q^{'}_1\) is the arrow 			       	\(f_1(\alpha):f_0(x)\rightarrow f_0(y)\).
	\item for all vertices \(x\in Q\) where \(\theta\) is defined we have \(f_0(\theta(x))=\theta^{'}(f_0(x))\).
	\end{itemize}
\item[Isomorphism of translation quivers] A morphism of translation quivers \(f:(Q,\theta)\rightarrow (Q^{'},\theta^{'})\) is an isomorphism if it has an inverse. The inverse is a morphism of translation quivers \(g:(Q^{'},\theta^{'})\rightarrow (Q,\theta)\) such that \(g\circ f\) is the identity on \((Q,\theta)\) and \(f\circ g\) is the identity on \((Q^{'},\theta^{'})\).
\end{description}

\end{definition}

For a quiver \(\Delta=(\Delta_0, \Delta_1)\) without loops, we can define a stable translation quiver \((\Z\Delta, \theta)\) as follows: 
\begin{description}[font=\bfseries]
\item[Vertices] The elements of \(\Z\times \Delta_0\).
\item[Arrows] For any arrow \(i\rightarrow j\) in \(\Delta_1\) and any \(n\in \Z\), we have arrows \((n,i)\rightarrow (n,j)\) and \((n, j)\rightarrow (n+1,i)\).
\item[Translation] Given by \(\theta(n,i)=(n-1,i)\).
\end{description}

Our focus will be on translation quivers of the form \((\mathbb Z\Delta,\theta)\) for \(\Delta\) of Dynkin type \(\A,\D\) and \(\E\). We use the following orientation on the Dynkin diagrams:
\def\arraystretch{0}
\begin{center}
\begin{tabular}{m{1cm}m{10cm}}
\(\A_r\): &
\begin{tikzpicture}[anchor=west]
\node(1) at (0,0) {1};
\node(2) at (1,0) {2};
\node(dots) at (2,0) {\(\cdots\)};
\node(-) at (3.5,0) {\(r-1\)};
\node(r) at (5,0) {\(r\)};
\draw[->] (1) -- (2);
\draw[->] (2) -- (dots);
\draw[->] (dots) -- (-);
\draw[->] (-) -- (r);
\end{tikzpicture}
\\
\(\D_r\):&
\begin{tikzpicture}[anchor=west]
\node(1) at (0,0) {1};
\node(2) at (1,0) {2};
\node(dots) at (2,0) {\(\cdots\)};
\node(-2) at (3.5,0) {\(r-2\)};
\node(-) at (5,-.5) {\(r-1\)};
\node(r) at (5,.5) {\(r\)};
\draw[->] (1) -- (2);
\draw[->] (2) -- (dots);
\draw[->] (dots) -- (-2);
\draw[->] (-2) -- (r);
\draw[->] (-2) -- (-);
\end{tikzpicture}
\\
\(\E_r\):&
\begin{tikzpicture}
\node(1) at (0,0) {1};
\node(2) at (1,0) {2};
\node(3) at (2,0) {3};
\node(dots) at (3,0) {\(\cdots\)};
\node(-2) at (4.2,0) {\(r-2\)};
\node(-) at (5,0)[anchor=west] {\(r-1\)};
\node(r) at (2,1) {\(r\)};
\draw[->] (1) -- (2);
\draw[->] (2) -- (3);
\draw[->] (3) -- (r);
\draw[->] (3) -- (dots);
\draw[->] (dots) -- (-2);
\draw[->] (-2) -- (-);
\end{tikzpicture}
\\

\end{tabular}
\end{center}
\def\arraystretch{1}

The stable translation quivers \((\mathbb Z\A_r,\!\theta)\), \((\mathbb Z\D_r,\theta)\), \((\mathbb Z\E_6,\theta)\), \((\mathbb Z\E_7,\theta)\) and \((\mathbb Z\E_8,\theta)\) are shown in figure \ref{translationq}. 

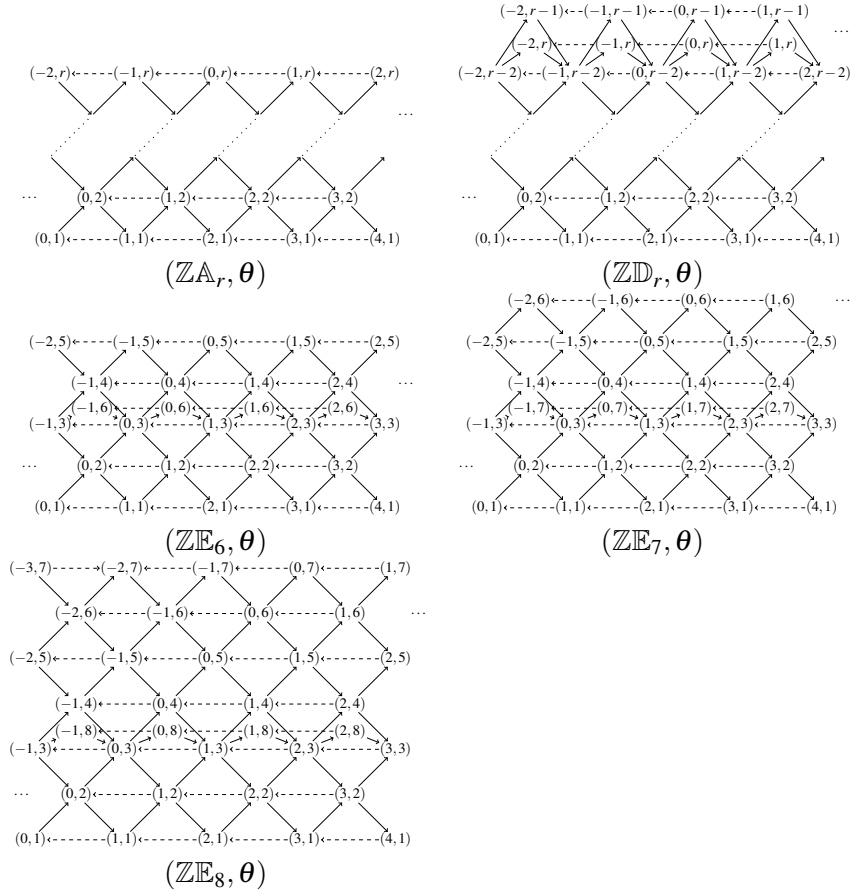
\begin{figure}[b]
\begin{tabular}{cc}
\scalebox{0.5}{
\begin{tikzpicture}[scale=1.1, inner sep=.5pt]
\node (01) at (0,0) {\((0,1)\)};
\node (11) at (2,0) {\((1,1)\)};
\node (21) at (4,0) {\((2,1)\)};
\node (31) at (6,0) {\((3,1)\)};
\node (41) at (8,0) {\((4,1)\)};
\node (02) at (1,1) {\((0,2)\)};
\node (12) at (3,1) {\((1,2)\)};
\node (22) at (5,1) {\((2,2)\)};
\node (32) at (7,1) {\((3,2)\)};

\node (-13) at (0,2) {};
\node (03) at (2,2) {};
\node (13) at (4,2) {};
\node (23) at (6,2) {};
\node (33) at (8,2) {};

\node (-1r-1) at (1,3) {};
\node (0r-1) at (3,3) {};
\node (1r-1) at (5,3) {};
\node (2r-1) at (7,3) {};

\node (-2r) at (0,4) {\((-2,r)\)};
\node (-1r) at (2,4) {\((-1,r)\)};
\node (0r) at (4,4) {\((0,r)\)};
\node (1r) at (6,4) {\((1,r)\)};
\node (2r) at (8,4) {\((2,r)\)};

\node (vdots) at (-.5,1) {\(\cdots\)};
\node (hdots) at (8.5,3) {\(\cdots\)};

\draw[->] (01)--(02);
\draw[->] (11)--(12);
\draw[->] (21)--(22);
\draw[->] (31)--(32);

\draw[->] (02)--(03);
\draw[->] (12)--(13);
\draw[->] (22)--(23);
\draw[->] (32)--(33);

\draw[->] (-1r-1)--(-1r);
\draw[->] (0r-1)--(0r);
\draw[->] (1r-1)--(1r);
\draw[->] (2r-1)--(2r);

\draw[->] (02)--(11);
\draw[->] (12)--(21);
\draw[->] (22)--(31);
\draw[->] (32)--(41);

\draw[->] (-13)--(02);
\draw[->] (03)--(12);
\draw[->] (13)--(22);
\draw[->] (23)--(32);

\draw[->] (-2r)--(-1r-1);
\draw[->] (-1r)--(0r-1);
\draw[->] (0r)--(1r-1);
\draw[->] (1r)--(2r-1);
\draw[thick, loosely dotted] (-13)--(-1r-1);
\draw[thick, loosely dotted] (03)--(0r-1);
\draw[thick, loosely dotted] (13)--(1r-1);
\draw[thick, loosely dotted] (23)--(2r-1);

\draw[dashed, ->] (11) -- (01);
\draw[dashed, ->] (21) -- (11);
\draw[dashed, ->] (31) -- (21);
\draw[dashed, ->] (41) -- (31);

\draw[dashed, ->] (12) -- (02);
\draw[dashed, ->] (22) -- (12);
\draw[dashed, ->] (32) -- (22);

\draw[dashed, ->] (-1r) -- (-2r);
\draw[dashed, ->] (0r) -- (-1r);
\draw[dashed, ->] (1r) -- (0r);
\draw[dashed, ->] (2r) -- (1r);

\end{tikzpicture}}
&
\scalebox{.5}{
\begin{tikzpicture}[scale=1.1, inner sep=.5pt]
\node (01) at (0,0) {\((0,1)\)};
\node (11) at (2,0) {\((1,1)\)};
\node (21) at (4,0) {\((2,1)\)};
\node (31) at (6,0) {\((3,1)\)};
\node (41) at (8,0) {\((4,1)\)};

\node (02) at (1,1) {\((0,2)\)};
\node (12) at (3,1) {\((1,2)\)};
\node (22) at (5,1) {\((2,2)\)};
\node (32) at (7,1) {\((3,2)\)};

\node (-13) at (0,2) {};
\node (03) at (2,2) {};
\node (13) at (4,2) {};
\node (23) at (6,2) {};
\node (33) at (8,2) {};

\node (-1r-1) at (1,3) {};
\node (0r-1) at (3,3) {};
\node (1r-1) at (5,3) {};
\node (2r-1) at (7,3) {};

\node (-2r) at (0,4) {\((-2,r-2)\)};
\node (-1r) at (2,4) {\((-1,r-2)\)};
\node (0r) at (4,4) {\((0,r-2)\)};
\node (1r) at (6,4) {\((1,r-2)\)};
\node (2r) at (8,4) {\((2,r-2)\)};

\node (-2l) at (1,4.7) {\((-2, r)\)};
\node (-2h) at (1,5.5) {\((-2, r-1)\)};
\node (-1l) at (3,4.7) {\((-1, r)\)};
\node (-1h) at (3,5.5) {\((-1, r-1)\)};
\node (0l)  at (5,4.7) {\((0, r)\)};
\node (0h)  at (5,5.5) {\((0, r-1)\)};
\node (1l)  at (7,4.7) {\((1, r)\)};
\node (1h)  at (7,5.5) {\((1, r-1)\)};

\node (vdots) at (-.5,1) {\(\cdots\)};
\node (hdots) at (8.4,5) {\(\cdots\)};

\draw[->] (01)--(02);
\draw[->] (11)--(12);
\draw[->] (21)--(22);
\draw[->] (31)--(32);

\draw[->] (02)--(03);
\draw[->] (12)--(13);
\draw[->] (22)--(23);
\draw[->] (32)--(33);

\draw[->] (-1r-1)--(-1r);
\draw[->] (0r-1)--(0r);
\draw[->] (1r-1)--(1r);
\draw[->] (2r-1)--(2r);

\draw[->] (02)--(11);
\draw[->] (12)--(21);
\draw[->] (22)--(31);
\draw[->] (32)--(41);

\draw[->] (-13)--(02);
\draw[->] (03)--(12);
\draw[->] (13)--(22);
\draw[->] (23)--(32);

\draw[->] (-2r)--(-1r-1);
\draw[->] (-1r)--(0r-1);
\draw[->] (0r)--(1r-1);
\draw[->] (1r)--(2r-1);

\draw[->] (-2r)--(-2l);
\draw[->] (-2r)--(-2h);
\draw[->] (-2l)--(-1r);
\draw[->] (-2h)--(-1r);
\draw[->] (-1r)--(-1l);
\draw[->] (-1r)--(-1h);
\draw[->] (-1l)--(0r);
\draw[->] (-1h)--(0r);
\draw[->] (0r)--(0l);
\draw[->] (0r)--(0h);
\draw[->] (0l)--(1r);
\draw[->] (0h)--(1r);
\draw[->] (1r)--(1l);
\draw[->] (1r)--(1h);
\draw[->] (1l)--(2r);
\draw[->] (1h)--(2r);

\draw[thick, loosely dotted] (-13)--(-1r-1);
\draw[thick, loosely dotted] (03)--(0r-1);
\draw[thick, loosely dotted] (13)--(1r-1);
\draw[thick, loosely dotted] (23)--(2r-1);

\draw[dashed, ->] (11) -- (01);
\draw[dashed, ->] (21) -- (11);
\draw[dashed, ->] (31) -- (21);
\draw[dashed, ->] (41) -- (31);

\draw[dashed, ->] (12) -- (02);
\draw[dashed, ->] (22) -- (12);
\draw[dashed, ->] (32) -- (22);

\draw[dashed, ->] (-1r) -- (-2r);
\draw[dashed, ->] (0r) -- (-1r);
\draw[dashed, ->] (1r) -- (0r);
\draw[dashed, ->] (2r) -- (1r);

\draw[dashed, ->] (-1l) -- (-2l);
\draw[dashed, ->] (0l) -- (-1l);
\draw[dashed, ->] (1l) -- (0l);

\draw[dashed, ->] (-1h) -- (-2h);
\draw[dashed, ->] (0h) -- (-1h);
\draw[dashed, ->] (1h) -- (0h);

\end{tikzpicture}}
\\
\((\mathbb Z \A_r,\theta)\)&
\((\mathbb Z \D_r,\theta)\)\\

\scalebox{.5}{
\begin{tikzpicture}[scale=1.1, inner sep=.5pt]
\node (01) at (0,0) {\((0,1)\)};
\node (11) at (2,0) {\((1,1)\)};
\node (21) at (4,0) {\((2,1)\)};
\node (31) at (6,0) {\((3,1)\)};
\node (41) at (8,0) {\((4,1)\)};

\node (02) at (1,1) {\((0,2)\)};
\node (12) at (3,1) {\((1,2)\)};
\node (22) at (5,1) {\((2,2)\)};
\node (32) at (7,1) {\((3,2)\)};

\node (-13) at (0,2) {\((-1,3)\)};
\node (03) at (2,2) {\((0,3)\)};
\node (13) at (4,2) {\((1,3)\)};
\node (23) at (6,2) {\((2,3)\)};
\node (33) at (8,2) {\((3,3)\)};

\node (-14) at (1,3) {\((-1,4)\)};
\node (04) at (3,3) {\((0,4)\)};
\node (14) at (5,3) {\((1,4)\)};
\node (24) at (7,3) {\((2,4)\)};

\node (-25) at (0,4) {\((-2,5)\)};
\node (-15) at (2,4) {\((-1,5)\)};
\node (05) at (4,4) {\((0,5)\)};
\node (15) at (6,4) {\((1,5)\)};
\node (25) at (8,4) {\((2,5)\)};

\node (-16) at (1,2.4) {\((-1,6)\)};
\node (06) at (3,2.4) {\((0,6)\)};
\node (16) at (5,2.4) {\((1,6)\)};
\node (26) at (7,2.4) {\((2,6)\)};

\node (vdots) at (-.5,1) {\(\cdots\)};
\node (hdots) at (8.5,3) {\(\cdots\)};

\draw[->] (01)--(02);
\draw[->] (11)--(12);
\draw[->] (21)--(22);
\draw[->] (31)--(32);

\draw[->] (02)--(11);
\draw[->] (12)--(21);
\draw[->] (22)--(31);
\draw[->] (32)--(41);

\draw[->] (02)--(03);
\draw[->] (12)--(13);
\draw[->] (22)--(23);
\draw[->] (32)--(33);

\draw[->] (-13)--(02);
\draw[->] (03)--(12);
\draw[->] (13)--(22);
\draw[->] (23)--(32);

\draw[->] (-13)--(-14);
\draw[->] (03)--(04);
\draw[->] (13)--(14);
\draw[->] (23)--(24);

\draw[->] (-13)--(-16);
\draw[->] (03)--(06);
\draw[->] (13)--(16);
\draw[->] (23)--(26);

\draw[->] (-14)--(03);
\draw[->] (04)--(13);
\draw[->] (14)--(23);
\draw[->] (24)--(33);

\draw[->] (-16)--(03);
\draw[->] (06)--(13);
\draw[->] (16)--(23);
\draw[->] (26)--(33);

\draw[->] (-14)--(-15);
\draw[->] (04)--(05);
\draw[->] (14)--(15);
\draw[->] (24)--(25);

\draw[->] (-25)--(-14);
\draw[->] (-15)--(04);
\draw[->] (05)--(14);
\draw[->] (15)--(24);

\draw[dashed, ->] (11) -- (01);
\draw[dashed, ->] (21) -- (11);
\draw[dashed, ->] (31) -- (21);
\draw[dashed, ->] (41) -- (31);

\draw[dashed, ->] (12) -- (02);
\draw[dashed, ->] (22) -- (12);
\draw[dashed, ->] (32) -- (22);

\draw[dashed, ->] (03) -- (-13);
\draw[dashed, ->] (13) -- (03);
\draw[dashed, ->] (23) -- (13);
\draw[dashed, ->] (33) -- (23);

\draw[dashed, ->] (04) -- (-14);
\draw[dashed, ->] (14) -- (04);
\draw[dashed, ->] (24) -- (14);

\draw[dashed, ->] (06) -- (-16);
\draw[dashed, ->] (16) -- (06);
\draw[dashed, ->] (26) -- (16);

\draw[dashed, ->] (-15) -- (-25);
\draw[dashed, ->] (05) -- (-15);
\draw[dashed, ->] (15) -- (05);
\draw[dashed, ->] (25) -- (15);
\end{tikzpicture}}
%
&
\scalebox{.5}{
\begin{tikzpicture}[scale=1.1, inner sep=.5pt]
\node (01) at (0,0) {\((0,1)\)};
\node (11) at (2,0) {\((1,1)\)};
\node (21) at (4,0) {\((2,1)\)};
\node (31) at (6,0) {\((3,1)\)};
\node (41) at (8,0) {\((4,1)\)};

\node (02) at (1,1) {\((0,2)\)};
\node (12) at (3,1) {\((1,2)\)};
\node (22) at (5,1) {\((2,2)\)};
\node (32) at (7,1) {\((3,2)\)};

\node (-13) at (0,2) {\((-1,3)\)};
\node (03) at (2,2) {\((0,3)\)};
\node (13) at (4,2) {\((1,3)\)};
\node (23) at (6,2) {\((2,3)\)};
\node (33) at (8,2) {\((3,3)\)};

\node (-14) at (1,3) {\((-1,4)\)};
\node (04) at (3,3) {\((0,4)\)};
\node (14) at (5,3) {\((1,4)\)};
\node (24) at (7,3) {\((2,4)\)};

\node (-25) at (0,4) {\((-2,5)\)};
\node (-15) at (2,4) {\((-1,5)\)};
\node (05) at (4,4) {\((0,5)\)};
\node (15) at (6,4) {\((1,5)\)};
\node (25) at (8,4) {\((2,5)\)};

\node (-26) at (1,5) {\((-2,6)\)};
\node (-16) at (3,5) {\((-1,6)\)};
\node (06) at (5,5) {\((0,6)\)};
\node (16) at (7,5) {\((1,6)\)};

\node (-17) at (1,2.4) {\((-1,7)\)};
\node (07) at (3,2.4) {\((0,7)\)};
\node (17) at (5,2.4) {\((1,7)\)};
\node (27) at (7,2.4) {\((2,7)\)};

\node (vdots) at (-.5,1) {\(\cdots\)};
\node (hdots) at (8.5,5) {\(\cdots\)};

\draw[->] (01)--(02);
\draw[->] (11)--(12);
\draw[->] (21)--(22);
\draw[->] (31)--(32);

\draw[->] (02)--(11);
\draw[->] (12)--(21);
\draw[->] (22)--(31);
\draw[->] (32)--(41);

\draw[->] (02)--(03);
\draw[->] (12)--(13);
\draw[->] (22)--(23);
\draw[->] (32)--(33);

\draw[->] (-13)--(02);
\draw[->] (03)--(12);
\draw[->] (13)--(22);
\draw[->] (23)--(32);

\draw[->] (-13)--(-14);
\draw[->] (03)--(04);
\draw[->] (13)--(14);
\draw[->] (23)--(24);

\draw[->] (-13)--(-17);
\draw[->] (03)--(07);
\draw[->] (13)--(17);
\draw[->] (23)--(27);

\draw[->] (-14)--(03);
\draw[->] (04)--(13);
\draw[->] (14)--(23);
\draw[->] (24)--(33);

\draw[->] (-17)--(03);
\draw[->] (07)--(13);
\draw[->] (17)--(23);
\draw[->] (27)--(33);

\draw[->] (-14)--(-15);
\draw[->] (04)--(05);
\draw[->] (14)--(15);
\draw[->] (24)--(25);

\draw[->] (-25)--(-14);
\draw[->] (-15)--(04);
\draw[->] (05)--(14);
\draw[->] (15)--(24);

\draw[->] (-25)--(-26);
\draw[->] (-15)--(-16);
\draw[->] (05)--(06);
\draw[->] (15)--(16);

\draw[->] (-26)--(-15);
\draw[->] (-16)--(05);
\draw[->] (06)--(15);
\draw[->] (16)--(25);
\draw[dashed, ->] (11) -- (01);
\draw[dashed, ->] (21) -- (11);
\draw[dashed, ->] (31) -- (21);
\draw[dashed, ->] (41) -- (31);

\draw[dashed, ->] (12) -- (02);
\draw[dashed, ->] (22) -- (12);
\draw[dashed, ->] (32) -- (22);

\draw[dashed, ->] (03) -- (-13);
\draw[dashed, ->] (13) -- (03);
\draw[dashed, ->] (23) -- (13);
\draw[dashed, ->] (33) -- (23);

\draw[dashed, ->] (04) -- (-14);
\draw[dashed, ->] (14) -- (04);
\draw[dashed, ->] (24) -- (14);

\draw[dashed, ->] (07) -- (-17);
\draw[dashed, ->] (17) -- (07);
\draw[dashed, ->] (27) -- (17);

\draw[dashed, ->] (-15) -- (-25);
\draw[dashed, ->] (05) -- (-15);
\draw[dashed, ->] (15) -- (05);
\draw[dashed, ->] (25) -- (15);

\draw[dashed, ->] (-16) -- (-26);
\draw[dashed, ->] (06) -- (-16);
\draw[dashed, ->] (16) -- (06);
\end{tikzpicture}}
%
\\
\((\mathbb Z \E_6,\theta)\)&
\((\mathbb Z \E_7,\theta)\)\\
\scalebox{.5}{
\begin{tikzpicture}[scale=1.2, inner sep=.5pt]
\node (01) at (0,0) {\((0,1)\)};
\node (11) at (2,0) {\((1,1)\)};
\node (21) at (4,0) {\((2,1)\)};
\node (31) at (6,0) {\((3,1)\)};
\node (41) at (8,0) {\((4,1)\)};

\node (02) at (1,1) {\((0,2)\)};
\node (12) at (3,1) {\((1,2)\)};
\node (22) at (5,1) {\((2,2)\)};
\node (32) at (7,1) {\((3,2)\)};

\node (-13) at (0,2) {\((-1,3)\)};
\node (03) at (2,2) {\((0,3)\)};
\node (13) at (4,2) {\((1,3)\)};
\node (23) at (6,2) {\((2,3)\)};
\node (33) at (8,2) {\((3,3)\)};

\node (-14) at (1,3) {\((-1,4)\)};
\node (04) at (3,3) {\((0,4)\)};
\node (14) at (5,3) {\((1,4)\)};
\node (24) at (7,3) {\((2,4)\)};

\node (-25) at (0,4) {\((-2,5)\)};
\node (-15) at (2,4) {\((-1,5)\)};
\node (05) at (4,4) {\((0,5)\)};
\node (15) at (6,4) {\((1,5)\)};
\node (25) at (8,4) {\((2,5)\)};

\node (-26) at (1,5) {\((-2,6)\)};
\node (-16) at (3,5) {\((-1,6)\)};
\node (06) at (5,5) {\((0,6)\)};
\node (16) at (7,5) {\((1,6)\)};

\node (-37) at (0,6) {\((-3,7)\)};
\node (-27) at (2,6) {\((-2,7)\)};
\node (-17) at (4,6) {\((-1,7)\)};
\node (07) at (6,6) {\((0,7)\)};
\node (17) at (8,6) {\((1,7)\)};

\node (-18) at (1,2.4) {\((-1,8)\)};
\node (08) at (3,2.4) {\((0,8)\)};
\node (18) at (5,2.4) {\((1,8)\)};
\node (28) at (7,2.4) {\((2,8)\)};

\node (vdots) at (-0.2,1) {\(\cdots\)};
\node (hdots) at (8.5,5) {\(\cdots\)};

\draw[->] (01)--(02);
\draw[->] (11)--(12);
\draw[->] (21)--(22);
\draw[->] (31)--(32);

\draw[->] (02)--(11);
\draw[->] (12)--(21);
\draw[->] (22)--(31);
\draw[->] (32)--(41);

\draw[->] (02)--(03);
\draw[->] (12)--(13);
\draw[->] (22)--(23);
\draw[->] (32)--(33);

\draw[->] (-13)--(02);
\draw[->] (03)--(12);
\draw[->] (13)--(22);
\draw[->] (23)--(32);

\draw[->] (-13)--(-14);
\draw[->] (03)--(04);
\draw[->] (13)--(14);
\draw[->] (23)--(24);

\draw[->] (-13)--(-18);
\draw[->] (03)--(08);
\draw[->] (13)--(18);
\draw[->] (23)--(28);

\draw[->] (-14)--(03);
\draw[->] (04)--(13);
\draw[->] (14)--(23);
\draw[->] (24)--(33);

\draw[->] (-18)--(03);
\draw[->] (08)--(13);
\draw[->] (18)--(23);
\draw[->] (28)--(33);

\draw[->] (-14)--(-15);
\draw[->] (04)--(05);
\draw[->] (14)--(15);
\draw[->] (24)--(25);

\draw[->] (-25)--(-14);
\draw[->] (-15)--(04);
\draw[->] (05)--(14);
\draw[->] (15)--(24);

\draw[->] (-25)--(-26);
\draw[->] (-15)--(-16);
\draw[->] (05)--(06);
\draw[->] (15)--(16);

\draw[->] (-26)--(-15);
\draw[->] (-16)--(05);
\draw[->] (06)--(15);
\draw[->] (16)--(25);

\draw[->] (-26)--(-27);
\draw[->] (-16)--(-17);
\draw[->] (06)--(07);
\draw[->] (16)--(17);

\draw[->] (-37)--(-26);
\draw[->] (-27)--(-16);
\draw[->] (-17)--(06);
\draw[->] (07)--(16);

\draw[dashed, ->] (11) -- (01);
\draw[dashed, ->] (21) -- (11);
\draw[dashed, ->] (31) -- (21);
\draw[dashed, ->] (41) -- (31);

\draw[dashed, ->] (12) -- (02);
\draw[dashed, ->] (22) -- (12);
\draw[dashed, ->] (32) -- (22);

\draw[dashed, ->] (03) -- (-13);
\draw[dashed, ->] (13) -- (03);
\draw[dashed, ->] (23) -- (13);
\draw[dashed, ->] (33) -- (23);

\draw[dashed, ->] (04) -- (-14);
\draw[dashed, ->] (14) -- (04);
\draw[dashed, ->] (24) -- (14);

\draw[dashed, ->] (08) -- (-18);
\draw[dashed, ->] (18) -- (08);
\draw[dashed, ->] (28) -- (18);

\draw[dashed, ->] (-15) -- (-25);
\draw[dashed, ->] (05) -- (-15);
\draw[dashed, ->] (15) -- (05);
\draw[dashed, ->] (25) -- (15);

\draw[dashed, ->] (-16) -- (-26);
\draw[dashed, ->] (06) -- (-16);
\draw[dashed, ->] (16) -- (06);

\draw[dashed, ->] (-37) -- (-27);
\draw[dashed, ->] (-17) -- (-27);
\draw[dashed, ->] (07) -- (-17);
\draw[dashed, ->] (17) -- (07);
\end{tikzpicture}}
\\
\((\mathbb Z \E_8,\theta)\)&
\end{tabular}
\caption{Translation quivers of Dynkin diagrams}
\label{translationq}
\end{figure}
The set of automorphisms on a translation quiver \((Q, \theta)\) forms a group \(A\).  A \emph{group of automorphisms} of \((Q,\theta)\) is a subgroup of \(A\). 

\begin{definition}
Let \(G\) be a group of automorphisms of a translation quiver \((Q,\theta)\). The group \(G\) is called admissible if each orbit of \(G\) intersects the set \(\left\{x\right\}\cup x^+\) in at most one point, and intersects the set \(\left\{x\right\}\cup x^-\) in at most one point for each \(x\in Q_0\). 
\end{definition}

Given a (stable) translation quiver \((Q,\theta)\) and an admissible group \(G\) of  automorphisms of \((Q,\theta)\), one can form the (stable) translation quiver \((Q,\theta)/G\), where \((Q/G)_0=Q_0/G\) and \((Q/G)_1=Q_1/G\). The maps \(s,t\) and \(\theta\) are induced by the corresponding maps of \((Q, \theta)\) \cite{riedtmannTysk}. For the stable translation quivers given by \(\mathbb Z\Delta\), where \(\Delta\) is a Dynkin diagram, all admissible automorphism groups are known \cite{riedtmannTysk}\cite{amiot}.

For a translation quiver \((Q,\theta)\), we can define a path category \(\Pc(Q,\theta)\) as follows:
\begin{description}[font=\bfseries]
\item[Objects] The objects are the vertices of \(Q\).
\item[Morphisms] For a pair of objects \(x,y\in\Ob\Pc(Q,\theta)\) we let \(\Hom_\Pc(x,y)\) be the \(k\)-vector space with basis given by the set of all paths from \(x\) to \(y\).
\end{description}
There is a mesh ideal in \(\Pc(Q,\theta)\) generated by relations \[m_x=\sum_{\alpha: z\rightarrow x}\alpha \sigma(\alpha)\] for any \(x\) where \(\theta(x)\) is defined.
We get the mesh category \(\Mcl(Q,\theta)\) by taking the quotient of the path category by the mesh ideal.

\begin{theorem}[\cite{happel}]\label{meshDeriv}
Let \(\Delta\) be a Dynkin quiver.
The category of indecomposables in \(\Db(k\Delta)\) is equivalent to \(\Mcl_{(\Z\Delta, \theta)}\) as an additive category. 
\end{theorem}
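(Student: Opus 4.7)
The strategy is the one due to Happel: construct a $k$-linear functor $F \colon \Mcl(\Z\Delta,\theta) \to \mathrm{ind}\,\Db(k\Delta)$ realising the Auslander-Reiten structure of $\Db(k\Delta)$, and verify that it is a bijection on objects and an isomorphism on Hom-spaces. Since $k\Delta$ is hereditary, every indecomposable object of $\Db(k\Delta)$ has the form $\Sigma^n M$ for $M \in \mathrm{ind}\,\modf k\Delta$ and $n \in \Z$. By Gabriel's theorem these indecomposable modules are in bijection with the positive roots of $\Delta$, hence are finite in number, so $\mathrm{ind}\,\Db(k\Delta)$ has a well-defined AR-quiver, and the Auslander-Reiten translate $\tau$ is defined on all indecomposables of $\Db(k\Delta)$ (this is where having the derived category, rather than just $\modf k\Delta$, matters, since projectives have no AR-translate inside the module category).

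First I would set up $F$ on objects. Fix an orientation of $\Delta$ and let $P_i$ denote the indecomposable projective at vertex $i$. Define $F(0,i) = P_i$ and $F(n,i) = \tau^{-n} P_i$ (a shift of an indecomposable module, or an indecomposable module, depending on $n$). Since every indecomposable of $\Db(k\Delta)$ lies in a single $\tau$-orbit of some $P_i$ (this follows from the structure of the AR-quiver of a Dynkin hereditary algebra, together with the fact that $\tau$ shifts between the module category and its shift in the derived category), $F$ is bijective on isomorphism classes of objects. On arrows, I would send the arrow $\alpha\colon (n,i)\to(n,j)$ (resp.\ $(n,j)\to (n+1,i)$) to a chosen irreducible map between the corresponding indecomposables; such irreducible maps exist and span the relevant radical-mod-radical-squared space by the standard theory of AR-quivers. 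This extends uniquely to a $k$-linear functor from the path category $\Pc(\Z\Delta,\theta)$.

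Next I would check that $F$ factors through the mesh ideal. For each vertex $x$ of $\Z\Delta$ the sum $\sum_{\alpha:z\to x} \alpha\,\sigma(\alpha)$ is the obstruction appearing in the middle term of the AR-triangle ending at $F(x)$: by the definition of an AR-triangle $F(\theta x) \to \bigoplus_z F(z) \to F(x) \to \Sigma F(\theta x)$, the composition $F(\theta x)\to F(x)$ is zero, and by choosing the irreducible maps compatibly with the triangle one obtains precisely the mesh relation. Hence $F$ descends to a functor $\bar F\colon \Mcl(\Z\Delta,\theta)\to \mathrm{ind}\,\Db(k\Delta)$.

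The main obstacle, and the last step, is to show that $\bar F$ is full and faithful. For this I would argue by induction along $\tau$-orbits: using the AR-triangles, the dimension of $\Hom_{\Db(k\Delta)}(F(y), F(x))$ satisfies the same recursion as $\dim\Hom_{\Mcl(\Z\Delta,\theta)}(y,x)$ (namely, it equals $\dim\Hom(y, \bigoplus_{z\in x^-}F(z)) - \dim\Hom(y, F(\theta x))$ whenever $\theta x$ is defined and $y \not\cong x$), and the base cases at the projectives are handled by the standard computation of $\Hom(P_i, P_j)$ together with vanishing of $\Hom$ between different shifts. Since $\bar F$ is surjective on morphisms in each step of the induction (the irreducible maps generate the radical), an equality of dimensions upgrades this to an isomorphism on Hom-spaces. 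This completes the equivalence. The delicate point is the bookkeeping for Dynkin types $\D$ and $\E$ where the mesh relations couple several arrows at the branch vertex; here I would lean on the fact that both sides have finitely many indecomposables and positive-definite bilinear form, so that matching dimensions forces a global isomorphism.
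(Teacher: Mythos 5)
The paper offers no proof of this statement at all---it is quoted directly from Happel's book---and your outline is essentially a reconstruction of Happel's own argument: identify the vertices of \((\Z\Delta,\theta)\) with the \(\tau\)-orbits of the indecomposable projectives, check the mesh relations against the AR-triangles, and match \(\Hom\)-dimensions via the mesh recursion, using that irreducible maps generate the radical for fullness. The only point where your sketch is noticeably thinner than the cited proof is the claim that the irreducible maps can be chosen ``compatibly'' so that \emph{all} meshes are realized by AR-triangles simultaneously (this global consistency of choices, rather than the vanishing of any single composite \(F(\theta x)\to F(x)\), is the real content of standardness, and is handled in Happel by an induction along a section of \(\Z\Delta\)); with that supplied, your plan is the standard proof.
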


The category \(\Db(k\Delta)\) has Auslander-Reiten triangles \cite{happel} and the Auslander-Reiten translation \(\tau\) commutes with suspension functor. 
Thus an autoequivalence on \(\Db(k\Delta)\) induces an automorphism on \((\Z\Delta, \theta)\).
In particular, the suspension functor on \(\Db(k\Delta)\) induces an automorphism on \(\Mcl_{(\Z\Delta, \theta)}\), of which we give the details in Table \ref{tableOfS}.
\setlength\dashlinedash{0.75pt}
\setlength\dashlinegap{1.5pt}
\setlength\arrayrulewidth{0.3pt}

\begin{table}[b]
\begin{center}
\begin{tabular}{m{3.1cm}|m{7.6cm}}
Translation quiver & Automorphism \(S\)\\\hline
\((\mathbb Z\A_n,\theta)\) 	& \(S(p,q)=(p+q, n+1-q)\)		\\ 
[1ex]
\hdashline

\((\mathbb Z\D_n,\theta)\) \(n\) even 	& \(S=\theta^{-(n+1)}\) 		\\ 
[1ex]
\hdashline
\((\mathbb Z\D_n,\theta)\) \(n\) odd 	& \(S=\theta^{-(n+1)}\xi\), where \(\xi\) is the automorphism on \((\mathbb Z\D_n,\theta)\) which exchanges the vertices \((x,r)\) and \((x,r-1)\) for \(x\in\mathbb Z\)		\\ 
[1ex]
\hdashline
\((\mathbb Z\E_6,\theta)\) 	& \(S=\xi\theta^{-6}\), where \(\xi\) is the automorphism on \((\mathbb Z\E_6,\theta)\) exchanging \((x,5)\) with \((x+2,1)\) and \((y,4)\) with \((y+1,2)\) for \(x,y\in\mathbb Z\)		\\
[1ex]
\hdashline
\((\mathbb Z\E_7,\theta)\) 	&	 \(S=\theta^{-9}\)			\\
[1ex]\hdashline
\((\mathbb Z\E_8,\theta)\) 	& \(S=\theta^{-15}\)			\\
\end{tabular}
\caption{The definition of the automorphism S in translation quivers of Dynkin type}\label{tableOfS}
\end{center}
\end{table}

Conversely, an automorphism on \((\Z\Delta, \theta\!)\) induces an autoequivalence on \(\Db\!(k\Delta)\):
\begin{theorem}[{\cite[Thm. 3.8]{MiyachiYekutieli}}]\label{Picard}
Let \(\Delta\) be a Dynkin diagram.
Let \(\Dc\Pic_k(k\Delta)\) be the derived Picard group,  i.\ e.\ the group of automorphisms on \(\Db(k\Delta)\) induced by two-sided tilting complexes. Let \(\Aut(\Z\Delta,\theta)\) be the group of automorphisms on \((\Z\Delta, \theta)\). Then \[\Dc\Pic_k(k\Delta)\cong \Aut(\Z\Delta,\theta).\]
\end{theorem}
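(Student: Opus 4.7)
The plan is to build mutually inverse group homomorphisms between the two groups, using Happel's equivalence (Theorem~\ref{meshDeriv}) between the indecomposables of $\Db(k\Delta)$ and the mesh category $\Mcl_{(\Z\Delta,\theta)}$ as the bridge. The easy direction is to send an autoequivalence to its action on the AR-quiver: any triangulated autoequivalence $F$ of $\Db(k\Delta)$ permutes the isoclasses of indecomposables and their irreducible morphisms, giving actions on the vertices and arrows of $\Z\Delta$; because $F$ is triangulated it sends AR-triangles to AR-triangles, so the induced map commutes with $\tau$ (identified with $\theta$) and lies in $\Aut(\Z\Delta,\theta)$. Factoring through isomorphism classes of functors then yields a well-defined group homomorphism $\Psi:\Dc\Pic_k(k\Delta)\to\Aut(\Z\Delta,\theta)$.

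For the inverse direction, given $f\in\Aut(\Z\Delta,\theta)$, $f$ first induces a $k$-linear automorphism of the path category $\Pc(\Z\Delta,\theta)$. Since $f$ commutes with $\theta$, it also commutes with the induced arrow bijection $\sigma$, so it sends each mesh relation $m_x$ to $m_{f(x)}$ and descends to a $k$-linear automorphism of $\Mcl_{(\Z\Delta,\theta)}$. By Theorem~\ref{meshDeriv}, extending additively to finite direct sums lifts this to a $k$-linear automorphism $\tilde f$ of $\Db(k\Delta)$. The triangulated structure on $\Db(k\Delta)$ is generated by AR-triangles, which correspond exactly to meshes in $(\Z\Delta,\theta)$, so $\tilde f$ automatically respects triangles.

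The main obstacle is upgrading the triangulated autoequivalence $\tilde f$ to one realized by a two-sided tilting complex, so that it genuinely lands in $\Dc\Pic_k(k\Delta)$. For this I would invoke Rickard's theorem, which identifies \emph{standard} derived autoequivalences of a $k$-algebra with those induced by two-sided tilting complexes, together with the fact (specific to the piecewise hereditary setting $\Db(k\Delta)$) that every triangulated autoequivalence is isomorphic to a standard one; this last step is the technical core of Miyachi--Yekutieli and uses the uniqueness of tilting objects built from the canonical generators of $\Db(k\Delta)$. Granting this, one obtains $\Phi:\Aut(\Z\Delta,\theta)\to\Dc\Pic_k(k\Delta)$. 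The identity $\Psi\circ\Phi=\mathrm{id}$ is immediate from the constructions, while $\Phi\circ\Psi=\mathrm{id}$ follows because a triangulated autoequivalence of $\Db(k\Delta)$ is determined up to isomorphism by its action on indecomposables and irreducible morphisms, i.e. by its image in $\Aut(\Z\Delta,\theta)$.
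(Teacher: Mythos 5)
The paper does not actually prove this statement: Theorem \ref{Picard} is quoted directly from Miyachi--Yekutieli, so there is no internal proof to compare yours against. Judged on its own, your outline has the right overall shape --- two mutually inverse maps with Happel's equivalence (Theorem \ref{meshDeriv}) as the bridge --- and your description of \(\Psi\) is fine (note that for Dynkin \(\Delta\) the spaces of irreducible maps are one-dimensional, so the action on arrows is determined by the action on vertices). But the substance of the theorem lies exactly in the steps you assert rather than prove, and at the crucial point the argument becomes circular: you justify the existence of \(\Phi\) by invoking ``the technical core of Miyachi--Yekutieli,'' which is the result being proved.

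Two specific gaps. First, Theorem \ref{meshDeriv} is an equivalence of \emph{additive} categories only, so an automorphism of the mesh category gives a priori only a \(k\)-linear self-equivalence of \(\Db(k\Delta)\). Your claim that \(\tilde f\) ``automatically respects triangles'' because the triangulation is ``generated by AR-triangles'' is not an argument: upgrading an additive equivalence to a triangle equivalence, and then to one induced by a two-sided tilting complex as membership in \(\Dc\Pic_k(k\Delta)\) requires, is precisely the delicate standardness issue that this whole paper is organized around, and it is the content of the cited theorem. Second, \(\Phi\circ\Psi=\mathrm{id}\) needs the statement that a triangle autoequivalence inducing the identity on \((\Z\Delta,\theta)\) is isomorphic to the identity functor; ``determined by its action on indecomposables and irreducible morphisms'' is exactly what must be shown (e.g.\ by normalizing the scalars attached to the arrows of the mesh category over a tree, or via uniqueness properties of the canonical tilting object), not something that is immediate. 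As it stands your text is a correct road map to the Miyachi--Yekutieli proof rather than a proof of it.
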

This result will be used in Section \ref{typeD}.

\section{Orbit Categories}
Throughout the rest of this paper we will assume \(k\) to be an algebraically closed field. 

\begin{definition}
Given an additive category  \(\mathcal A\) and an automorphism \(F:\mathcal A\rightarrow \mathcal A \), we define the quotient functor \(\pi: \mathcal A\rightarrow \mathcal A/F\), where \(\mathcal A/F\) is the orbit category. The orbit category has the same objects as \(\mathcal A\), and morphisms given by \(\Hom_{\mathcal A/F}(X,Y)=\bigoplus_{n\in\mathbb Z}\Hom_\mathcal A(X,F^nY)\).
\end{definition}
We can replace the automorphism by an autoequivalence, see e.\ g.\ \cite{AsashibaGabriel}.
Certain orbit categories of triangulated categories were shown by Keller in \cite{kellerTriOrb} to be triangulated:
\begin{theorem}[\cite{kellerTriOrb}]\label{KellerTriOrb}
Let \(\mathcal H\) be an hereditary abelian k-category such that there is a triangle equivalence \[\mathcal D^b(A)\cong\mathcal D^b(\mathcal H),\] where \(A\) is a finite dimensional \(k\)-algebra. If \(F\) is a standard autoequivalence on \(\mathcal D^b(\mathcal H)\) such that 
\begin{itemize}
\item for each indecomposable object \(U\) of \(\mathcal H\) there are only finitely many objects \(F^iU\) that lie in \(\mathcal H\) for \(i\in\mathbb Z\).
\item there exist some integer \(N\geq0\) such that the \(F\)-orbit of each indecomposable object of \(\mathcal D^b(\mathcal H)\) contains an object \(U\left[n\right]\) for some \(0\leq n\leq N\) and some indecomposable object \(U\) of \(\mathcal H\).
\end{itemize}
Then the orbit category \(\mathcal O_F(\mathcal H):=\mathcal D^b(\mathcal H)/F\) is naturally a triangulated category, and the projection functor \(\pi:\mathcal D^b(\mathcal H)\rightarrow \mathcal O_F(\mathcal H)\) is a triangle functor. 
\end{theorem}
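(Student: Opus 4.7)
The plan is to reproduce the strategy of \cite{kellerTriOrb}: lift everything to the dg level, realise the orbit category as the zeroth cohomology of a dg category concentrated in degree $0$, and transport the triangulated structure from the derived category of that dg category.

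First I would fix an algebraic (dg) enhancement $\mathcal{A}$ of $\Db(\mathcal{H})$, i.e.\ a pretriangulated dg category with $H^0\mathcal{A}\simeq\Db(\mathcal{H})$. Since $F$ is standard by hypothesis, I can lift $F$ to a dg autoequivalence $\tilde F:\mathcal{A}\to\mathcal{A}$. Then I would form the \emph{orbit dg category} $\mathcal{B}$ with the same objects as $\mathcal{A}$ and
\[
\mathcal{B}(X,Y)=\bigoplus_{n\in\mathbb{Z}}\mathcal{A}(X,\tilde F^{n}Y),
\]
with composition inherited from $\mathcal{A}$ via $\tilde F$. By construction, $H^{0}\mathcal{B}$ is precisely the orbit category $\mathcal{O}_{F}(\mathcal{H})$.

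The technical heart is to show that $\mathcal{B}$ is quasi-isomorphic to $H^{0}\mathcal{B}$, i.e.\ that $H^{p}\mathcal{B}(X,Y)=0$ for $p\neq 0$ and $X,Y$ indecomposable objects of $\mathcal{H}$. Because $\mathcal{H}$ is hereditary, every indecomposable of $\Db(\mathcal{H})$ is of the form $U[m]$ for some indecomposable $U\in\mathcal{H}$, so I can write $F^{n}Y\simeq V_{n}[m_{n}]$ with $V_{n}\in\mathcal{H}$ indecomposable. Then $H^{p}\mathcal{A}(X,F^{n}Y)=\Ext^{p+m_{n}}(X,V_{n})$, which vanishes unless $p+m_{n}\in\{0,1\}$. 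For a fixed $p$ the surviving $n$ are those for which $F^{n}Y$ lies in $\mathcal{H}[-p]\cup\mathcal{H}[1-p]$; since $F$ commutes with the shift, this is equivalent to $F^{n}(Y[p])\in\mathcal{H}$ (respectively $F^{n}(Y[p-1])\in\mathcal{H}$). Hypothesis (i) applied to the indecomposables $Y[p]$ and $Y[p-1]$ then implies that only finitely many $n$ contribute. For $p=0$ the surviving $n$ give exactly the orbit sum computing $\Hom_{\mathcal{O}_{F}(\mathcal{H})}(X,Y)$; for $p\neq 0$ one checks that the resulting finite direct sum is killed by a telescoping argument using the composition with $\tilde F$, yielding vanishing.

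Once $\mathcal{B}$ is concentrated in degree $0$, the Yoneda functor embeds $H^{0}\mathcal{B}=\mathcal{O}_{F}(\mathcal{H})$ fully faithfully into the derived category $\mathcal{D}(\mathcal{B})$, and hypothesis (ii) ensures that this image lies in the thick subcategory $\thick(\mathcal{B})=\mathrm{per}(\mathcal{B})$ and is stable under shifts and cones coming from $\Db(\mathcal{H})$. The triangulated structure of $\mathrm{per}(\mathcal{B})$ therefore restricts to one on $\mathcal{O}_{F}(\mathcal{H})$, and the projection $\pi$ is a triangle functor by construction. The main obstacle is the vanishing in degree $p\neq 0$ in the previous paragraph: keeping track of which shifts of $\mathcal{H}$ are visited by the $F$-orbit of $Y$, and combining the local finiteness from (i) with the global boundedness from (ii), is where all the hypotheses must be used simultaneously.
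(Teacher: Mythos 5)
The paper offers no proof of this statement---it is quoted verbatim from Keller's \emph{On triangulated orbit categories}---so the only meaningful comparison is with Keller's original argument, whose overall architecture you have correctly identified: lift \(F\) to a dg autoequivalence of an enhancement \(\mathcal{A}\), form the dg orbit category \(\mathcal{B}\), embed \(H^0\mathcal{B}=\mathcal{O}_F(\mathcal{H})\) into \(\mathcal{D}(\mathcal{B})\) by Yoneda, and transport the triangulated structure. However, the step you call the technical heart---that \(\mathcal{B}\) has cohomology concentrated in degree \(0\), so that \(\mathcal{B}\) is quasi-isomorphic to \(H^0\mathcal{B}\)---is false, and no telescoping argument can rescue it. Already for \(F=\tau^{-1}[1]\) (the cluster category, which satisfies both hypotheses) one has \(H^{-1}\mathcal{B}(X,Y)\supseteq\Hom_{\Db(\mathcal H)}(X,\tau^{-1}Y)\neq 0\) for suitable indecomposables \(X,Y\in\mathcal H\), since \(F^{1}Y[-1]=\tau^{-1}Y\) lies in \(\mathcal H\); similarly for \(F=\tau^{w}\) the degree-one part contains summands \(\Ext^1_{\mathcal H}(X,\tau^{wn}Y)\), which are nonzero in general. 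Your computation correctly shows that for each fixed \(p\) only finitely many \(n\) contribute---this is exactly what hypothesis (i) buys, namely \(\Hom\)-finiteness of the orbit category---but it does not and cannot give vanishing for \(p\neq 0\).

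Keller's actual proof does not need \(\mathcal{B}\) to be concentrated in degree \(0\). He defines the triangulated hull \(\mathcal{M}\) as the triangulated subcategory of \(\mathcal{D}(\mathcal{B})\) generated by the representable dg modules; full faithfulness of \(\mathcal{O}_F(\mathcal{H})\rightarrow\mathcal{M}\) on representables is automatic from \(\Hom_{\mathcal{D}(\mathcal{B})}(X^{\wedge},Y^{\wedge})\cong H^0\mathcal{B}(X,Y)\), irrespective of the higher cohomology of \(\mathcal{B}\). The real work, which your sketch asserts rather than proves, is \emph{essential surjectivity}: every object of \(\mathcal{M}\), built from representables by shifts and cones, is again isomorphic to a representable. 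This is where hereditarity of \(\mathcal{H}\) enters (every object of \(\Db(\mathcal{H})\) is the direct sum of its shifted cohomologies, so the cone of a morphism between images of stalk complexes again decomposes into stalk complexes lying in the image of \(\pi\)) together with hypothesis (ii), which confines each orbit to a bounded window \(\mathcal{H},\mathcal{H}[1],\ldots,\mathcal{H}[N]\) and allows an induction on \(N\). Without this closure-under-cones argument your proposal does not establish that the triangulated structure of \(\mathcal{D}(\mathcal{B})\) restricts to \(\mathcal{O}_F(\mathcal{H})\), so as written the proof has a genuine gap at its central step.
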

We now let \(\Delta\) be a Dynkin diagram, and consider the category \(\mathcal D^b(k\Delta)\). The AR-translation \(\tau\) and the suspension functor \(\left[1\right]\) satisfies the requirements on \(F\). In many cases, as we will see, so will the  composition \(\tau^m\left[n\right]\).

The AR-quiver of \(\mathcal D^b(k\Delta)\) is isomorphic as a translation quiver to \((\mathbb Z \Delta,\theta)\).  The action of \(\tau\) and \(\left[1\right]\) on the AR-quiver of \(\mathcal D^b(k \Delta)\) is equivalent to the action of respectively \(\theta\) and \(S\) on \((\mathbb Z \Delta,\theta)\). Hence, if \(\tau^m\left[n\right]\) satisfies the requirements on \(F\),  the AR-quiver of \(\mathcal D^b(k\Delta)/\tau^m\left[n\right]\) is isomorphic as a translation quiver to \((\mathbb Z \Delta,\theta)/(\theta^m S^n)\).

In \(\mathcal D^b(k\Delta)\) we know that \(\left[2\right]\cong\tau^{-h}\) where \(h\) is the Coxeter number of \(\Delta\) see \cite{GabrielAR}\cite{clusterKeller}. The Coxeter number is known to be \(n+1\) for \(\A_n\), \(2n-2\) for \(\D_n\), \(12\) for \(\E_6\), \(18\) for \(\E_7\) and \(30\) for \(\E_8\).

\section{Amiot's theorem}
A very important tool we will use is theorem \cite[Thm.\ 7.2]{amiot} by Amiot. We first need to give a definition of two special classes of triangulated categories.
\begin{definition}
A \(\Hom\)-finite triangulated category \(\mathcal T\) is called
\begin{description}[font=\bfseries]
\item[algebraic] if it is triangle equivalent to the stable category of a Frobenius category.
\item[standard] if the category of indecomposable objects of \(\T\) equivalent as a \(k\)-linear category to the mesh category \(\Mcl_{(k\Gamma, \tau)}\), where \(\Gamma\) is the AR-quiver of \(\mathcal T\).
\item[finite] if it has finitely many indecomposable objects up to isomorphism
\end{description}
\end{definition}

\begin{theorem}[{\cite[7.2]{amiot}}] \label{amiotMain}
Let \(\mathcal T\) be an indecomposable finite triangulated category which is algebraic and standard. Then there exists a Dynkin diagram \(\Delta\) of type \(\A\), \(\D\) or \(\E\), and an auto-equivalence \(\Phi\) on \(\mathcal D^b( k\Delta)\) such that \(\mathcal T\) is triangle equivalent to the orbit category \(\mathcal D^b( k\Delta)/\Phi\).
\end{theorem}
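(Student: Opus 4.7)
The plan is to proceed in three stages: first identify the shape of the Auslander-Reiten quiver of $\mathcal{T}$, second use the standard hypothesis to get a $k$-linear equivalence with a mesh category (hence an orbit category via Happel's theorem \ref{meshDeriv}), and third upgrade this to a triangle equivalence using the algebraic hypothesis.

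For the first stage, I would observe that $\mathcal{T}$, being $\Hom$-finite, finite, and triangulated, has Auslander-Reiten triangles, so its AR-quiver $\Gamma_\mathcal{T}$ is a finite stable translation quiver. By a Riedtmann-type structure theorem (in the triangulated setting this is essentially the Xiao-Zhu result mentioned in the introduction), the connected components of $\Gamma_\mathcal{T}$ are of the form $\Z\Delta/G$ for a Dynkin diagram $\Delta$ and an admissible group $G$ of automorphisms. Since $\mathcal{T}$ is indecomposable, $\Gamma_\mathcal{T}\cong \Z\Delta/G$ for a single Dynkin $\Delta$.

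For the second stage, by the standard hypothesis the category $\ind\mathcal{T}$ is equivalent as a $k$-linear category to the mesh category $\Mcl(\Z\Delta/G)$. The mesh category of a quotient equals the quotient of the mesh category by the induced action, so $\Mcl(\Z\Delta/G)\simeq \Mcl(\Z\Delta)/G$, and by Happel's theorem \ref{meshDeriv} this is $\ind\Db(k\Delta)/\Phi$, where $\Phi$ is the autoequivalence on $\Db(k\Delta)$ induced by a generator of $G$ (this $\Phi$ exists and is a standard autoequivalence by theorem \ref{Picard}). Taking additive closure gives a $k$-linear equivalence $\mathcal{T}\simeq \Db(k\Delta)/\Phi$.

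The main obstacle — and the place where the algebraic hypothesis really enters — is the third stage: promoting this $k$-linear equivalence to a triangle equivalence. The strategy I would follow is to use the fact that an algebraic triangulated category is by definition the stable category of a Frobenius category, and hence admits a canonical DG-enhancement. One then needs to lift the $k$-linear equivalence to an equivalence of DG-enhancements, which would automatically preserve triangles. Concretely, one identifies a basic additive generator $T$ of $\mathcal{T}$ corresponding under the $k$-linear equivalence to a basic additive generator of $\Db(k\Delta)/\Phi$, computes $\send(T)$ on both sides, and invokes a Keller-style Morita theorem for algebraic triangulated categories (in the spirit of the Keller-Reiten result cited in the introduction) to conclude that $\mathcal{T}$ and $\Db(k\Delta)/\Phi$ are triangle equivalent because they share the same DG endomorphism algebra of a common generator. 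The delicate point will be verifying that the autoequivalence $\Phi$ produced from the mesh-category comparison does indeed satisfy Keller's hypotheses from theorem \ref{KellerTriOrb}, so that $\Db(k\Delta)/\Phi$ carries a triangulated structure in the first place; this is essentially a finiteness check on the $\Phi$-orbits, which is automatic once $G$ is an admissible group acting freely with finitely many orbits.
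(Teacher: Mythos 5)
You should first be aware that the paper does not prove this statement at all: it is quoted verbatim from Amiot \cite[Thm.\ 7.2]{amiot} and used as a black box (only its proof is later \emph{referred to} in Corollary \ref{sameARquiver}). So there is no in-paper proof to compare against; what follows is an assessment of your sketch against Amiot's actual argument. Your three-stage outline does track that argument faithfully in its first two stages: the AR-quiver of a connected finite \(\Hom\)-finite triangulated category is \(\Z\Delta/G\) for a Dynkin \(\Delta\) and an admissible group \(G\) (Riedtmann/Xiao--Zhu), standardness identifies \(\ind\T\) with \(\Mcl_{(\Z\Delta,\theta)}/G\), Happel's theorem \ref{meshDeriv} identifies the latter with \(\ind\Db(k\Delta)/\Phi\), and Miyachi--Yekutieli (theorem \ref{Picard}) supplies the lift of the quiver automorphism to a standard autoequivalence \(\Phi\). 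This is exactly how Amiot reduces the problem.

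The genuine gap is in your third stage. From the \(k\)-linear equivalence you obtain an isomorphism of \emph{graded} algebras \(\bigoplus_{i}\Hom(T,\Sigma^{i}T)\) on the two sides, i.e.\ an isomorphism of the cohomology of the DG endomorphism algebras; you do \emph{not} yet know that the DG endomorphism algebras themselves are quasi-isomorphic, which is what Keller's Morita theorem requires. Your phrase ``they share the same DG endomorphism algebra of a common generator'' asserts precisely the thing that still has to be proved. Closing this gap --- showing that the triangulated (equivalently, DG or \(A_\infty\)) enhancement of such a category is determined by its underlying \(k\)-linear structure --- is the technical heart of Amiot's paper, carried out by an explicit analysis of the possible \(A_\infty\)-structures/obstructions in the standard finite case; it is not a formal consequence of algebraicity plus a Morita theorem. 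Your final remark about verifying Keller's hypotheses from theorem \ref{KellerTriOrb} is also slightly off target: Amiot does not need \(\Db(k\Delta)/\Phi\) to be triangulated a priori by Keller's construction; rather, the triangulated structure is transported from \(\T\) once the enhancement-uniqueness step is done. So the outline is the right shape, but as written the decisive step is assumed rather than proved.
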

We specialize the theorem to deal with the cases we will use:

\begin{corollary}\label{sameARquiver}
Let \(\Lambda\) be a representation-finite, self-injective, basic algebra such that \(\smod \Lambda\) is of standard type.
Let \(\Delta\) be a Dynkin diagram, and let \(\Phi: \Db ( k\Delta)\rightarrow \Db ( k\Delta)\) be a functor such that \(\Db ( k\Delta)/\Phi\) is triangulated.

If the AR-quivers of \(\modf \Lambda\) and \(\Db ( k\Delta)/\Phi\) are isomorphic as translation quivers, then  \(\smod \Lambda\) and \(\Db ( k\Delta)/\Phi\) are equivalent as triangulated categories.
\end{corollary}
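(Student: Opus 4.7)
\medskip

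\noindent\textbf{Proof plan.} The strategy is to apply Amiot's theorem (Theorem \ref{amiotMain}) to $\smod\Lambda$ to realize it as an orbit category, and then use Theorem \ref{Picard} to match the two orbit categories via their translation quiver data.

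First I would verify that $\smod\Lambda$ satisfies the hypotheses of Theorem \ref{amiotMain}. It is finite because $\Lambda$ is representation-finite, algebraic because $\modf\Lambda$ is a Frobenius category (as $\Lambda$ is self-injective) with stable category $\smod\Lambda$, and standard by assumption; connectedness is either assumed or can be handled componentwise. Applying Theorem \ref{amiotMain} then yields a Dynkin diagram $\Delta'$ and an autoequivalence $\Phi':\Db(k\Delta')\to\Db(k\Delta')$ together with a triangle equivalence $\smod\Lambda\simeq\Db(k\Delta')/\Phi'$.

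Next I would compare AR-quivers. Since a triangle equivalence induces an isomorphism of AR-quivers, the AR-quiver of $\smod\Lambda$ is isomorphic as a translation quiver to $(\Z\Delta',\theta)/\langle\psi'\rangle$, where $\psi'$ is the admissible automorphism of $(\Z\Delta',\theta)$ induced by $\Phi'$. On the other hand, the AR-quiver of $\Db(k\Delta)/\Phi$ is $(\Z\Delta,\theta)/\langle\psi\rangle$ with $\psi$ induced by $\Phi$. The hypothesis gives an isomorphism of translation quivers
\[
(\Z\Delta,\theta)/\langle\psi\rangle \;\cong\; (\Z\Delta',\theta)/\langle\psi'\rangle.
\]
Because $(\Z\Delta,\theta)$ is the universal cover of a connected stable translation quiver of Dynkin tree class $\Delta$, and the tree class is an invariant of the quotient, we get $\Delta\cong\Delta'$. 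Lifting the isomorphism of quotients to the universal covers then shows that $\langle\psi\rangle$ and $\langle\psi'\rangle$ are conjugate admissible subgroups of $\Aut(\Z\Delta,\theta)$.

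Finally I would pass back from translation quiver automorphisms to autoequivalences using Theorem \ref{Picard}: the isomorphism $\Dc\Pic_k(k\Delta)\cong\Aut(\Z\Delta,\theta)$ shows that $\psi$ and $\psi'$ (being conjugate automorphisms of $\Z\Delta$) correspond to autoequivalences $\Phi$ and $\Phi'$ which are conjugate in the derived Picard group. Conjugate autoequivalences yield triangle equivalent orbit categories, so $\Db(k\Delta)/\Phi\simeq\Db(k\Delta')/\Phi'\simeq\smod\Lambda$ as triangulated categories. The main obstacle I expect is the second paragraph above: namely, upgrading a bare isomorphism of the quotient translation quivers to an isomorphism of their universal covering data that identifies the deck transformation groups. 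This is where one must use that $(\Z\Delta,\theta)\to(\Z\Delta,\theta)/G$ really is a universal cover in the sense of translation quivers, so that the automorphism group acting on the cover is determined (up to conjugation) by the quotient; once this is in hand, the rest of the argument is formal via Amiot's theorem and Theorem \ref{Picard}.
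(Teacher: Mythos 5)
Your proposal is correct in substance and reaches the same conclusion, but it takes a genuinely more explicit route than the paper. The paper's proof is two lines: it checks that \(\smod\Lambda\) is finite, standard and algebraic, and then appeals to \emph{the proof} (not merely the statement) of Amiot's theorem, in which the pair \((\Delta,\Phi)\) is read off directly from the AR-quiver of the category; since the two categories have isomorphic AR-quivers, Amiot's construction identifies them with the same orbit category and no separate comparison step is needed. You instead use Amiot's theorem as a black box to produce some \(\Db(k\Delta')/\Phi'\simeq\smod\Lambda\), and then do the matching yourself via Riedtmann's covering theory (to recover \(\Delta\cong\Delta'\) and the conjugacy class of the deck transformation group) and Theorem \ref{Picard} (to transport the conjugacy of \(\psi\) and \(\psi'\) in \(\Aut(\Z\Delta,\theta)\) back to the derived Picard group). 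What your version buys is transparency: every step the paper hides inside ``the proof of Amiot's theorem'' is made visible, at the cost of two extra inputs and two points that deserve a word of justification. First, conjugacy of the subgroups \(\langle\psi\rangle\) and \(\langle\psi'\rangle\) only gives that \(\psi'\) is conjugate to \(\psi\) or to \(\psi^{-1}\); this is harmless because \(\Db(k\Delta)/\Phi\) and \(\Db(k\Delta)/\Phi^{-1}\) coincide, but it should be said. Second, the assertion that conjugate autoequivalences yield \emph{triangle} equivalent orbit categories is immediate at the \(k\)-linear level but requires, for the triangulated structure of Keller's construction, that the conjugating equivalence is standard (given by a two-sided tilting complex, which Theorem \ref{Picard} guarantees) and hence lifts to the DG level; you assert this without comment. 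Neither point is a genuine gap, but both are exactly the kind of detail your more explicit route obliges you to supply.
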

\begin{proof}
Obviously, \(\smod \Lambda\) is a finite standard triangulated category. It is algebraic, because \(\Lambda\) is self-injective and basic, and hence Frobenius.
By the proof of theorem \ref{amiotMain} in \cite{amiot}, the equivalence follows.
\end{proof}

\section{Self-injective Representation-finite Algebras}

Our aim is to use Claire Amiot's theorem to show that many orbit categories of bounded derived categories of hereditary algebras are actually realizable as stable module categories of self-injective algebras. In order to apply the theorem on the stable module categories of self-injective algebras, we need to know that the categories are algebraic and standard. It is clear that they are algebraic, as any representation-finite self-injective algebra is Frobenius. 

Asashiba has in his paper \cite{asashiba2} defined an invariant under derived and stable equivalence,  called the \emph{type} of the indecomposable representation-finite self-injective algebra. He shows that any two standard (resp. non-standard) representation-finite self-injective algebras have the same type if and only if they are derived equivalent, and also if and only if they are stably equivalent. In the appendix to \cite{asashiba1} a list of algebras, in terms of quivers with relations, is given for each type defined in \cite{asashiba2}. In sections \ref{typeA}, \ref{typeD} and \ref{typeE}, we make use of the explicit representatives for each type, and give the details of equivalent orbit categories and stable module categories of self-injective indecomposable algebras.

We give a brief summary of the classification of Asashiba. 

\begin{definition}[\cite{asashiba2}]\label{defType}
Let \(\Delta\) be a Dynkin diagram type \(\A,\D,\E_6,\E_7\) or \(\E_8\).
We define the type of a representation-finite self-injective indecomposable algebra \(\Lambda\) to be a triple \((\Delta(\!\Lambda\!), f(\!\Lambda\!), t(\!\Lambda\!))\). The parameters are defined as follows:
\begin{description}[font=\bfseries]
\item[\(\Delta(\Lambda)\)] the tree type of \(\Lambda\) (for this definition, we write \(\Delta=\Delta(\Lambda)\)).
\end{description}
Let \(m_{\Delta}\) be the Loewy length  of the mesh category \(k\mathbb Z\Delta\). From \cite{RmVenner} we know that \(m_{\A_n}=n,\  m_{\D_n}=2n-3,\  m_{\E_6}=11,\  m_{\E_7}=17\) and \(m_{\E_8}=29\).
The AR-quiver of the stable module category of \(\Lambda\) is known \cite{riedtmannTysk} to be on the form \(\mathbb Z\Delta/\langle\phi\tau^{-r}\rangle\) for some automorphism \(\phi\) with a fixed vertex.
\begin{description}[font=\bfseries]
\item[\(f(\Lambda)\)] the frequency of \(\Lambda\) is given by \(f(\Lambda):=r/m_{\Delta}\).
\item[\(t(\Lambda)\)] the torsion order \(t(\Lambda)\) is the order of \(\phi\). 
\end{description}
\end{definition}
Using this notation, Asashiba gives a list of the types a standard representation-finite self-injective indecomposable algebra can have.

\begin{theorem}[\cite{asashiba2}]\label{standardRFS}
The set of types of standard representation-finite self-injective indecomposable algebras is the disjoint union of the following sets:
\begin{itemize}
\item \(\left\{(\A_n,\frac{s}{n},1)|n,s\in\mathbb N\right\}\)
\item \(\left\{(\A_{2p+1},s,2)|n,s\in\mathbb N\right\}\)
\item \(\left\{(\D_n,s,1)|n,s\in\mathbb N,n\geq 4\right\}\)
\item \(\left\{(\D_{3m},\frac{s}{3},1)|m,s\in\mathbb N, m\geq 2, 3\nmid s \right\}\)
\item \(\left\{(\D_n,s,2)|n,s\in\mathbb N, n\geq 4\right\}\)
\item \(\left\{(\D_4,s,3)|s\in\mathbb N\right\}\)
\item \(\left\{(\E_n,s,1)|n=6,7,8, s\in\mathbb N\right\}\)
\item \(\left\{(\E_6,s,2)|s\in\mathbb N\right\}\)
\end{itemize}
\end{theorem}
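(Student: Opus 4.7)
The plan is to derive the list from Riedtmann's structural theorem for representation-finite self-injective algebras together with a direct enumeration of admissible automorphism groups of the Dynkin translation quivers. Recall from \cite{riedtmannTysk} that for any such indecomposable algebra $\Lambda$ the stable AR-quiver has the form $(\mathbb Z\Delta, \theta)/G$ with $G$ an admissible infinite cyclic group of the form $\langle \phi\tau^{-r}\rangle$, where $\phi$ fixes a vertex and has finite order equal to $t(\Lambda)$, and $r = m_\Delta \cdot f(\Lambda)$. Once the admissible pairs $(\phi, r)$ have been classified for each Dynkin type, one reads off the possible triples $(\Delta(\Lambda), f(\Lambda), t(\Lambda))$ directly from the definition.

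First I would enumerate, for each Dynkin $\Delta$, the vertex-stabilising automorphisms of $(\mathbb Z\Delta, \theta)$ modulo $\langle \theta \rangle$. Up to powers of $\theta$ these reduce to the symmetry group of $\Delta$ as an unoriented graph: trivial for $\E_7$ and $\E_8$; order two for $\E_6$; order two for $\A_n$ with $n=2p+1$ odd (only then does the reflection fix a central vertex); order two for all $\D_n$ with $n \geq 4$ (the swap of the two tail vertices); and the exceptional $S_3$ for $\D_4$ coming from triality. Next I would impose admissibility on $\phi\tau^{-r}$, which bounds $r$ from below in terms of $m_\Delta/t$ and produces the denominators in the table. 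The constraint $3 \nmid s$ on the row $(\D_{3m}, s/3, 1)$ reflects a compatibility between the translation and a triality-like automorphism on $\D_{3m}$ that makes the naively admissible range shrink by a factor of three on one subfamily.

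Finally, for each triple on the list I would exhibit an explicit standard algebra realising it, which is the content of the appendix to \cite{asashiba1}: Nakayama algebras for type $\A$ with $t=1$, M\"obius algebras for type $\A$ with $t=2$, and the $D_{n,s,i}$ and $E_{n,s,i}$ families otherwise. Standardness is verified by producing a simply-connected Galois cover whose push-down functor induces the required stable equivalence. The main obstacle I anticipate is the converse: proving that no standard algebra realises, for instance, type $(\D_{3m}, s/3, 1)$ with $3 \mid s$. This requires classifying the algebras with a given stable AR-quiver up to Morita equivalence and identifying the exceptional non-standard algebras of Riedtmann–Bretscher–L\"aser in \cite{RmVenner} as the unique representatives in the forbidden parameter range, so that the remaining types comprise exactly the eight families listed.
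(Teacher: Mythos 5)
You should first note that the paper does not prove this statement at all: it is quoted verbatim from Asashiba \cite{asashiba2} and used as a black box, so there is no internal proof to compare against. Judged on its own merits, your outline follows the historically correct strategy (Riedtmann's covering theory, enumeration of automorphisms, explicit realizations from the appendix of \cite{asashiba1}), but it contains a genuine gap at its central step. You claim that imposing admissibility on \(\langle\phi\tau^{-r}\rangle\) ``bounds \(r\) from below in terms of \(m_\Delta/t\) and produces the denominators in the table.'' It does not. The group \(\langle\tau^{-r}\rangle\) is admissible on \(\mathbb Z\Delta\) for \emph{every} \(r\geq 1\) (a \(\tau\)-orbit stays in a fixed \(\Delta\)-position and so never meets \(x^+\) or \(x^-\)), yet for type \(\D_n\) only frequencies in \(\mathbb N\) (resp.\ \(\frac13\mathbb N\) when \(3\mid n\)) occur, and for \(\E_n\) only integer frequencies. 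The restrictions on \(f(\Lambda)\) are not combinatorial constraints on admissible groups; they come from the much harder question of which quotients \(\mathbb Z\Delta/\Pi\) can be completed by a \emph{configuration} of projective vertices to the full AR-quiver of a self-injective algebra. That is the actual content of Riedtmann and Bretscher--L\"aser--Riedtmann \cite{RmVenner}, and your argument as written would wrongly admit types such as \((\D_5,\tfrac17,1)\).

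Secondly, the ``main obstacle'' you identify — proving that no standard algebra realises \((\D_{3m},s/3,1)\) with \(3\mid s\) — is a misreading of the statement. When \(3\mid s\) the frequency \(s/3\) is an integer and that type already belongs to the third family \((\D_n,s',1)\) with \(s'=s/3\); the condition \(3\nmid s\) is there only to make the eight sets disjoint, as the paper itself remarks at the start of Section \ref{D3m}. The genuinely delicate converse point is different: the non-standard algebras (which exist only in characteristic \(2\)) have type \((\D_{3m},\tfrac13,1)\), i.e.\ they \emph{share} a type with standard algebras, so the type alone does not detect standardness and one must separately verify that every listed type is attained by a standard representative and that standard and non-standard algebras of the same type are distinguished by other means. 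Your outline does not address this.
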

Since this is an exhaustive list of all families of standard representation-finite self-injective algebras, this list also tells us what orbit categories to consider. If an orbit category has finitely many indecomposables, but is not of any of the types in the list, it cannot be equivalent to the stable module category of a self-injective algebra.

\section{Type \(\A\)}\label{typeA}
There are two standard types of representation-finite self-injective algebras that have AR-quivers of the form \(\mathbb Z\A_n/G\), up to stable equivalence. The representatives gives for these two standard types by \cite{asashiba1} and also by \cite{riedtmannA} are the Nakayama algebras, with AR-quivers of cylindrical shape, and the M\"obius algebras, which have AR-quivers shaped like a M\"obius band. 
The Nakayama algebra case was also considered in \cite[Thm.\ 3.3.8]{XiaoZhu}, but we will restate it for the sake of completeness.

For the Nakayama algebras, the stable module categories will be equivalent to orbit categories using functors that are some power of the AR-translation \(\tau\). For M\"obius algebras we need a "flip functor" to get the M\"obius shape of the quiver:
\begin{definition}\label{Aflip}
Let \(n=2l+1\) with \(l\in\mathbb N\). The flip functor \(\phi\) on \(\mathcal D^b(k\A_n)\) is given by \(\phi=\tau^{l+1} [1]\).
\end{definition}

\subsection{Self-injective Nakayama algebras}\label{Nakayama}

\begin{definition}
\begin{figure}[bht]
\centering
\begin{tikzpicture}[bend left=15,radius=2cm, auto]
\node (1) at (90:2){\(1\)};
\node (2) at (45:2){\(2\)};
\node (v) at (135:2){\(v\)};
\draw[->] (1) to node {\(\alpha_1\)} (2);
\draw[->] (v) to node {\(\alpha_v\)} (1);
\draw[->, dashed] (2) ..controls (0:2) and (180:2) .. (v);
\end{tikzpicture}\caption{Quiver of a self-injective Nakayama algebra  \(N_{v,r}\)}\label{NakayamaQ}
\end{figure}
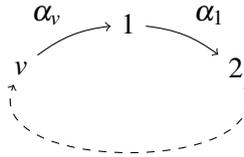
A self-injective Nakayama algebra is a path algebra \(N_{v,r}=Q_v/I_r\), for \(v\geq 1,r\geq 2\), where
\(Q_{v}\) is the quiver in figure \ref{NakayamaQ} and \({I_r}\) is the ideal generated by paths of length \(r\).
\end{definition}
These algebras are self-injective, and the stable module category \(\smod N_{v,r}\) is triangulated.
The AR-quiver of \(\smod N_{v,r}\) has been described by Riedtmann in \cite{riedtmannA}. As a translation quiver it is of the form \(\mathbb Z \A_{r-1}/(\theta^v)\). In the notation of Asashiba this is of type \((A_n,\frac{v}{r},1)\).

We denote the indecomposable modules over \(N_{v,r}\) by \(M_n^l\), where \(n\) is the socle of the module, and \(l\) is the (Loewy) length of the module. The AR-quiver of \(\smod N_{v,r}\) is shown in Figure \ref{NakayamaAR}.

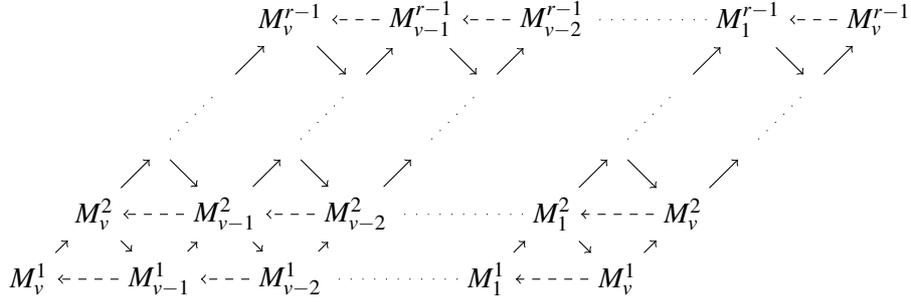
\begin{figure}[ht]
\centering
\begin{tikzpicture}[scale=.86]
\node (1v) at (0,0)   {\(M_{v}^1\)};
\node (1v-1) at (2,0) {\(M_{v-1}^1\)};
\node (1v-2) at (4,0) {\(M_{v-2}^1\)};
\node (12) at (7,0)   {\(M_{1}^1\)};
\node (11) at (9,0)   {\(M_{v}^1\)};

\node (2v) at (1,1)   {\(M_{v}^2\)};
\node (2v-1) at (3,1) {\(M_{v-1}^2\)};
\node (2v-2) at (5,1) {\(M_{v-2}^2\)};
\node (22) at (8,1)   {\(M_{1}^2\)};
\node (21) at (10,1)  {\(M_{v}^2\)};

\node (rv) at (4,4)   {\(M_{v}^{r-1}\)};
\node (rv-1) at (6,4) {\(M_{v-1}^{r-1}\)};
\node (rv-2) at (8,4) {\(M_{v-2}^{r-1}\)};
\node (r2) at (11,4)  {\(M_{1}^{r-1}\)};
\node (r1) at (13,4)  {\(M_{v}^{r-1}\)};
\node (3v) at (2,2)   {};
\node (3v-1) at (4,2) {};
\node (3v-2) at (6,2) {};
\node (32) at (9,2)   {};
\node (31) at (11,2)  {};

\node (r-1v) at (3,3)   {};
\node (r-1v-1) at (5,3) {};
\node (r-1v-2) at (7,3) {};
\node (r-12) at (10,3)  {};
\node (r-11) at (12,3)  {};
\draw[->] (1v)--(2v);
\draw[->] (1v-1)--(2v-1);
\draw[->] (1v-2)--(2v-2);
\draw[->] (12)--(22);
\draw[->] (11)--(21);

\draw[->] (2v)--(3v);
\draw[->] (2v-1)--(3v-1);
\draw[->] (2v-2)--(3v-2);
\draw[->] (22)--(32);
\draw[->] (21)--(31);

\draw[->] (r-1v)--(rv);
\draw[->] (r-1v-1)--(rv-1);
\draw[->] (r-1v-2)--(rv-2);
\draw[->] (r-12)--(r2);
\draw[->] (r-11)--(r1);

\draw[->] (2v)--(1v-1);
\draw[->] (2v-1)--(1v-2);
\draw[->] (22)--(11);

\draw[->] (3v)--(2v-1);
\draw[->] (3v-1)--(2v-2);
\draw[->] (32)--(21);

\draw[->] (rv)--(r-1v-1);
\draw[->] (rv-1)--(r-1v-2);
\draw[->] (r2)--(r-11);
\draw[loosely dotted] (3v)--(r-1v);
\draw[loosely dotted] (3v-1)--(r-1v-1);
\draw[loosely dotted] (3v-2)--(r-1v-2);
\draw[loosely dotted] (32)--(r-12);
\draw[loosely dotted] (31)--(r-11);
\draw[loosely dotted] (12) -- (1v-2);
\draw[loosely dotted] (22) -- (2v-2);
\draw[loosely dotted] (r2) -- (rv-2);
\draw[dashed, ->] (1v-1) -- (1v);
\draw[dashed, ->] (2v-1) -- (2v);
\draw[dashed, ->] (rv-1) -- (rv);

\draw[dashed, ->] (1v-2) -- (1v-1);
\draw[dashed, ->] (2v-2) -- (2v-1);
\draw[dashed, ->] (rv-2) -- (rv-1);

\draw[dashed, ->] (11) -- (12);
\draw[dashed, ->] (21) -- (22);
\draw[dashed, ->] (r1) -- (r2);
\end{tikzpicture}
\caption{AR-quiver of  \(\smod N_{v,r}\).
The leftmost and rightmost diagonal are identified.}
\label{NakayamaAR}
\end{figure}

\begin{proposition}[{\cite[Thm.\ 3.3.8]{XiaoZhu}}]
The categories \(\smod N_{v,r}\)  and \(\mathcal D^b(k\A_{r-1})/\tau^v\) are triangle equivalent for \(r\geq2\) and \(v\in\mathbb N\setminus\{0\}\).
\end{proposition}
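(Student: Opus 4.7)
The plan is to invoke Corollary~\ref{sameARquiver}, which reduces the required triangle equivalence to a comparison of Auslander--Reiten quivers once some standardness and triangulatedness conditions are settled. Concretely, I would verify (i) that $\smod N_{v,r}$ is a standard, representation-finite, self-injective, basic triangulated category; (ii) that $\Db(k\A_{r-1})/\tau^v$ is genuinely triangulated; and (iii) that the two AR-quivers agree as translation quivers.

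For (i), the algebra $N_{v,r}$ is visibly basic, self-injective, and representation-finite: it is a Nakayama algebra on a cyclic quiver with all relations of the same length. Its Asashiba type is $(\A_{r-1}, v/(r-1), 1)$, which appears in the list of standard types of Theorem~\ref{standardRFS}, so $\smod N_{v,r}$ is standard. For (ii), I would check the hypotheses of Theorem~\ref{KellerTriOrb} for $F = \tau^v$: the functor $\tau$ is standard on $\Db(k\A_{r-1})$; the heart $\modf k\A_{r-1}$ contains only finitely many indecomposables, so every $\tau^v$-orbit meets it in finitely many points; and the relation $\tau^{-r} \cong [2]$ (with $r$ the Coxeter number of $\A_{r-1}$) guarantees that every orbit contains an object $U[n]$ with $U$ an indecomposable module and $n$ in a bounded range.

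For (iii), the AR-quiver of $\Db(k\A_{r-1})$ is $(\Z\A_{r-1},\theta)$, and the autoequivalence $\tau$ acts on it as the translation $\theta$; by the discussion after Theorem~\ref{meshDeriv}, the AR-quiver of $\Db(k\A_{r-1})/\tau^v$ is therefore $(\Z\A_{r-1},\theta)/\langle\theta^v\rangle$. The subgroup $\langle\theta^v\rangle$ is admissible, since the orbit of a vertex $(p,q)$ is the horizontal line $\{(p-iv,q):i\in\Z\}$ and for $v\geq 1$ no two distinct points of this line are linked by an arrow in $\Z\A_{r-1}$. On the other hand, Riedtmann's description recalled above shows that the AR-quiver of $\smod N_{v,r}$ is also $\Z\A_{r-1}/\langle\theta^v\rangle$; the two translation quivers thus coincide, and Corollary~\ref{sameARquiver} delivers the required triangle equivalence.

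The main obstacle I anticipate is the verification of Keller's finiteness conditions in step (ii); everything else is either immediate from Riedtmann's and Asashiba's classifications or reduces to the translation-quiver formalism already set up in Sections~2 and~3.
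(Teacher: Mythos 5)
Your proposal is correct and follows essentially the same route as the paper: check Keller's conditions for $\tau^v$, invoke Theorem \ref{standardRFS} for standardness of $\smod N_{v,r}$, identify both AR-quivers with $(\Z\A_{r-1},\theta)/\langle\theta^v\rangle$, and conclude by Corollary \ref{sameARquiver}. The only difference is cosmetic: the paper writes down the explicit translation-quiver isomorphism $M_p^q\mapsto(v-p,q)$ where you cite Riedtmann's description directly, while you supply more detail than the paper on the verification of Keller's hypotheses.
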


\begin{proof}
For \(v\neq 0\), the functor \(\tau^v\) fulfills the conditions in theorem \ref{KellerTriOrb}, so \(\mathcal D^b\!(k\A_{r-1}\!)/\!\tau^v\) is triangulated. The algebra \(N_{v,r}\) is a representation-finite, self-injective, basic algebra, whose stable module category is standard by theorem \ref{standardRFS}. 

Consider the stable translation quiver \((\mathbb Z\A_{r-1}, \theta)/\langle \theta^v \rangle\). This is the quiver we get if we take the quiver \((\mathbb Z\A_{r-1}, \theta)\) from Figure \ref{translationq} and identify \((p,q)\) with \((p+v,q)\) for all \((q\in \mathbb Z)\). One morphism of translation quivers from the AR-quiver of the algebra \(N_{v,r}\) to \((\mathbb Z\A_{r-1}, \theta)/\langle \theta^v \rangle\) is given by \(M_p^q\mapsto (v-p, q)\) (the map on the arrows follow uniquely). Moreover, it is an isomorphism with inverse given by  \((p, q)\mapsto M_{v-p}^q\)

The conclusion follows from Corollary \ref{sameARquiver}.
\end{proof}
The explicit translation quiver isomorphisms can be found in a similar way in the other cases.

\subsection{M\"{o}bius algebras}\label{Mobius}
\begin{figure}[ht]
\centering
\begin{tikzpicture}[font=\footnotesize, scale=1.1]
\node (v1)  at (1,1) {\(\circ\)};
\node (v1a) at (2,1) {\(\circ\)};
\node (v1b) at (2,0) {\(\circ\)};
\node (v1c) at (3,1) {};
\node (v1d) at (3,0) {};
\node (v1e) at (4,1) {};
\node (v1f) at (4,0) {};
\node (v1g) at (5,1) {\(\circ\)};
\node (v1h) at (5,0) {\(\circ\)};

\node (v2)  at (6,1) {\(\circ\)};
\node (v2a) at (6,2) {\(\circ\)};
\node (v2b) at (7,2) {\(\circ\)};
\node (v2c) at (6,3) {};
\node (v2d) at (7,3) {};

\node (vxe) at (3,6) {};
\node (vxf) at (3,7) {};
\node (vxg) at (2,6) {\(\circ\)};
\node (vxh) at (2,7) {\(\circ\)};

\node (vv)  at (1,6) {\(\circ\)};
\node (vva) at (1,5) {\(\circ\)};
\node (vvb) at (0,5) {\(\circ\)};
\node (vvc) at (1,4) {};
\node (vvd) at (0,4) {};
\node (vve) at (1,3) {};
\node (vvf) at (0,3) {};
\node (vvg) at (1,2) {\(\circ\)};
\node (vvh) at (0,2) {\(\circ\)};

\draw[->] (v1)-- node[above]{\(\alpha_0^1\)}(v1a);
\draw[->] (v1)-- node[below]{\(\beta_0^1\)}(v1b);
\draw[->] (v1a)-- node[above]{\(\alpha_1^1\)}(v1c);
\draw[->] (v1b)-- node[below]{\(\beta_1^1\)}(v1d);
\draw[dotted] (v1c)-- node[above]{}(v1e);
\draw[dotted] (v1d)-- node[below]{}(v1f);
\draw[->] (v1e)-- node[above]{\(\alpha_{l-1}^1\)}(v1g);
\draw[->] (v1f)-- node[below]{\(\beta_{l-1}^1\)}(v1h);
\draw[->] (v1g)-- node[above]{\(\alpha_{l}^1\)}(v2);
\draw[->] (v1h)-- node[below]{\(\beta_{l}^1\)}(v2);

\draw[->] (v2)-- node[above right]{\(\alpha_0^2\)}(v2a);
\draw[->] (v2)-- node[right]{\(\beta_0^2\)}(v2b);
\draw[->] (v2a)-- node[right]{\(\alpha_1^2\)}(v2c);
\draw[->] (v2b)-- node[right]{\(\beta_1^2\)}(v2d);

\draw[dotted] (v2c) .. controls (6,5) and (5,6) .. (vxe);
\draw[dotted] (v2d) .. controls (7,6) and (6,7) .. (vxf);

\draw[->] (vxe)-- node[above]{\(\alpha_{l-1}^{v-1}\)}(vxg);
\draw[->] (vxf)-- node[above]{\(\beta_{l-1}^{v-1}\)}(vxh);
\draw[->] (vxg)-- node[above right]{\(\alpha_{l}^{v-1}\)}(vv);
\draw[->] (vxh)-- node[above]{\(\beta_{l}^{v-1}\)}(vv);

\draw[->] (vv)-- node[right]{\(\alpha_0^v\)}(vva);
\draw[->] (vv)-- node[left]{\(\beta_0^v\)}(vvb);
\draw[->] (vva)-- node[right]{\(\alpha_1^v\)}(vvc);
\draw[->] (vvb)-- node[left]{\(\beta_1^v\)}(vvd);
\draw[dotted] (vvc)-- node[right]{}(vve);
\draw[dotted] (vvd)-- node[left]{}(vvf);
\draw[->] (vve)-- node[right]{\(\alpha_{l-1}^v\)}(vvg);
\draw[->] (vvf)-- node[left]{\(\beta_{l-1}^v\)}(vvh);
\draw[->] (vvg)-- node[right]{\(\alpha_{l}^v\)}(v1);
\draw[->] (vvh)-- node[left]{\(\beta_{l}^v\)}(v1);
\end{tikzpicture}
\caption{Quiver of the M\"obius algebra \(M_{l,v}\)}
\label{MobiusQ}
\end{figure}
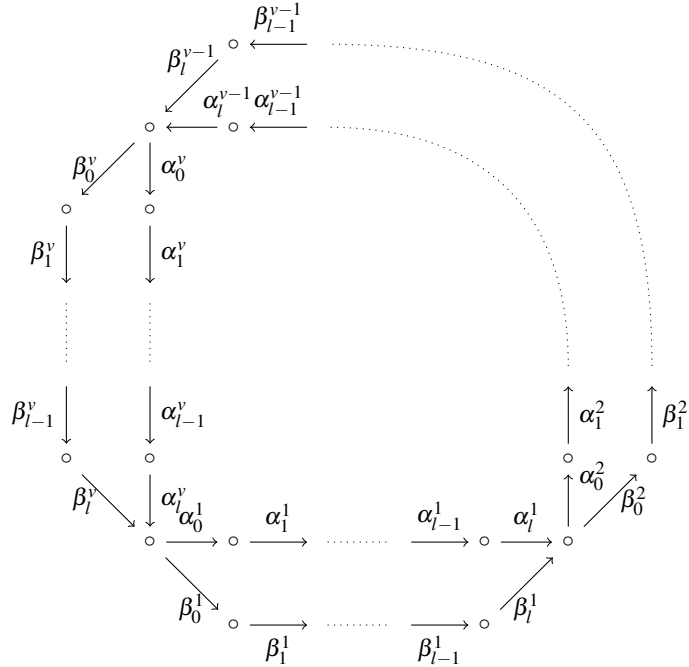
\begin{definition}
Let \(l,v\geq 1\).
The M\"obius algebra \(M_{l,v}\) is the path algebra \(kQ/I\), where \(Q\) is the quiver in figure \ref{MobiusQ}
and \(I\) is generated by the relations:
\begin{enumerate}
\item \(\alpha^i_l\cdots\alpha^i_0=\beta^i_l\cdots\beta^i_0\) for  \(i\in\{1,\ldots, v\}\)
\item \(\beta^{i+1}_0\alpha^i_l=0\) and \(\alpha^{i+1}_0\beta^i_l=0\) for \(i\in\left\{1,\ldots, v-1\right\}\)
\item \(\alpha^{1}_0\alpha^v_l=0\) and \(\beta^{1}_0\beta^v_l=0\)
\item paths of length \(l+2\) are equal to zero
\end{enumerate}
\end{definition}

\begin{example}
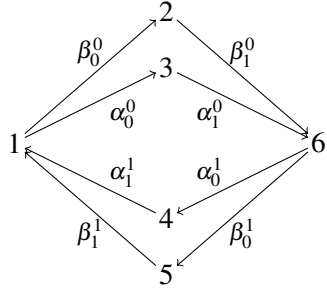
\begin{figure}[ht]
\centering
\begin{tikzpicture}
\node(1) at (0,0)[inner sep=1 pt]{\(1\)};
\node(2) at (2,1.75)[inner sep=1 pt]{\(2\)};
\node(3) at (2,1)[inner sep=1 pt]{\(3\)};
\node(4) at (2,-1)[inner sep=1 pt]{\(4\)};
\node(5) at (2,-1.75)[inner sep=1 pt]{\(5\)};
\node(6) at (4,0)[inner sep=1 pt]{\(6\)};

\draw[->](1)-- node[above]{\footnotesize \(\beta^0_0\)}(2);
\draw[->](2)-- node[above]{\footnotesize \(\beta^0_1\)}(6);
\draw[->](1)-- node[below, near end]{\footnotesize \(\alpha^0_0\)}(3);
\draw[->](3)-- node[below, near start]{\footnotesize \(\alpha^0_1\)}(6);
\draw[->](6)-- node[above, near end]{\footnotesize \(\alpha^1_0\)}(4);
\draw[->](4)-- node[above, near start]{\footnotesize \(\alpha^1_1\)}(1);
\draw[->](6)-- node[below]{\footnotesize \(\beta^1_0\)}(5);
\draw[->](5)-- node[below]{\footnotesize \(\beta^1_1\)}(1);

\end{tikzpicture}
\caption{The quiver of \(M_{1,2}\)}
\label{mobiusExQ}
\end{figure}

Let \(l=1\) and \(v=2\). The algebra \(M_{1,2}\) is given by the quiver in Figure \ref{mobiusExQ} with relations
\begin{align*}
\alpha_1^0\alpha_0^0&=\beta_1^0\beta_0^0 &\alpha_1^1\alpha_0^1&=\beta_1^1\beta_0^1\\
\beta_0^1\alpha_1^0&=0 & \alpha_0^1\beta_1^0&=0\\
\alpha_0^0\alpha_1^1&=0 & \beta_0^0\beta_1^1&=0.\\
\end{align*}
The AR-quiver of this algebra is shown in Figure \ref{ARquiverMoebius}. We see that \(\smod M_{1,2}\) is triangle equivalent to \(\Db(k \A_3)/\phi\tau^6\).

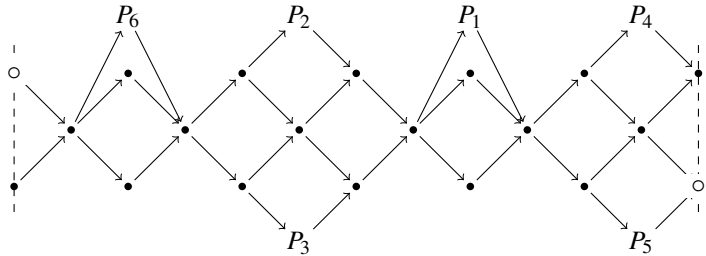
\begin{figure}[ht]
\centering
\begin{tikzpicture}[scale=.75,yscale=-1]
 \foreach \x in {0,...,6}
  \foreach \y in {1,3}
   \node (\y-\x) at (\x*2,\y) [vertex] {};
 \foreach \x in {0,...,5}
  \foreach \y in {2}
   \node (\y-\x) at (\x*2+1,\y) [vertex] {};
 \foreach \xa/\xb in {0/1,1/2,2/3,3/4,4/5,5/6}
  \foreach \ya/\yb in {1/2,3/2}
   {
    \draw [->] (\ya-\xa) -- (\yb-\xa);
    \draw [->] (\yb-\xa) -- (\ya-\xb);
   }  
 \draw [dashed] (0,.5) -- (0,3.5); 
 \draw [dashed] (12,.5) -- (12,3.5);
 
 \node (P6) at (2,0)[inner sep=1 pt]{\small \(P_6\)};
 \draw [->] (2-0)--(P6);
 \draw[->] (P6)--(2-1);
 \node(P2) at (5,0)[inner sep=1 pt]{\small \(P_2\)};
 \draw[->] (1-2) -- (P2);
 \draw[->] (P2) -- (1-3);
 \node(P3) at (5,4)[inner sep=1 pt]{\small \(P_3\)};
 \draw[->] (3-2)--(P3);
 \draw[->] (P3)--(3-3);
 \node(P1) at (8,0)[inner sep=1 pt]{\small \(P_1\)};
 \draw[->](2-3)--(P1);
 \draw[->](P1)--(2-4);
 \node(P4) at (11,0)[inner sep=1 pt]{\small \(P_4\)};
 \draw[->](1-5)--(P4);
 \draw[->](P4) --(1-6);
 \node(P5) at (11,4)[inner sep=1 pt]{\small \(P_5\)};
 \draw[->](3-5)--(P5);
 \draw[->](P5)--(3-6);
 \replacevertex {(1-0)}{[fill=white, inner sep=2pt]{\(\circ\)}}
 \replacevertex {(3-6)}{[fill=white, inner sep=2pt]{\(\circ\)}}
 
\end{tikzpicture} \caption{The AR-quiver of the algebra \(M_{1,2}\). The identical objects on either side are identified.}\label{ARquiverMoebius}
\end{figure}
\end{example}

Riedtmann\cite{riedtmannA} showed that in general the AR-quiver of the stable module category of a M\"obius algebra \(M_{l,v}\) is of the form \(\mathbb Z\A_{2l+1}/(\theta^{(2l+1)v}\phi)\), where \(\phi=\theta^\frac{2l+2}{2}S\) and \(S\) is as in table \ref{tableOfS}. It is the asymmetry of relations \((2)\) and \((3)\) in \(I\) that gives rise to the "M\"obius" twist.

In Asashiba's notation these algebras are of type \((A_{2l+1},v,2)\).

\begin{proposition}
Let \(l, v\!\geq 1\) and let \(n\!=\!2l\!+1\!\). The categories \(\smod M_{l,v}\) and \(D^b(k\!\A_n\!)/\tau^{nv}\!\phi\) are equivalent as triangulated categories.
\end{proposition}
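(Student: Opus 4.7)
The plan is to apply Corollary \ref{sameARquiver} in the same spirit as the Nakayama case. First I would verify the standing hypotheses on both sides. The M\"obius algebra \(M_{l,v}\) is self-injective, basic and representation-finite, and by the explicit description of its AR-quiver it has type \((\A_n, v, 2)\), which appears in Asashiba's list in Theorem \ref{standardRFS}; hence \(\smod M_{l,v}\) is a finite, algebraic, standard triangulated category. On the other side, the autoequivalence \(\tau^{nv}\phi = \tau^{nv+l+1}[1]\) satisfies the two conditions of Theorem \ref{KellerTriOrb} whenever \(nv \geq 1\), so \(\Db(k\A_n)/\tau^{nv}\phi\) is triangulated.

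It then suffices to exhibit an isomorphism between the two AR-quivers, viewed as translation quivers. By Riedtmann's description recalled in the paragraph before the proposition, the AR-quiver of \(\smod M_{l,v}\) is \((\mathbb Z\A_n, \theta)/\langle \theta^{nv}\psi\rangle\) where \(\psi = \theta^{(n+1)/2}S = \theta^{l+1}S\) and \(S\) is the automorphism of \((\mathbb Z\A_n, \theta)\) recorded in Table \ref{tableOfS}. The AR-quiver of \(\Db(k\A_n)/\tau^{nv}\phi\) is obtained from \((\mathbb Z\A_n, \theta)\) by quotienting by the cyclic group generated by the automorphism that \(\tau^{nv}\phi\) induces on it. Since \(\tau\) acts as \(\theta\) and \([1]\) acts as \(S\), the functor \(\tau^{nv}\phi = \tau^{nv+l+1}[1]\) induces \(\theta^{nv+l+1}S = \theta^{nv}\psi\). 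Hence both AR-quivers arise as quotients of \((\mathbb Z\A_n, \theta)\) by the same admissible cyclic subgroup and are canonically isomorphic as translation quivers.

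An application of Corollary \ref{sameARquiver} then gives the desired triangle equivalence. The only point that requires a little care is the identification of Riedtmann's ``M\"obius twist'' \(\theta^{l+1}S\) on \((\mathbb Z\A_n, \theta)\) with the action of the flip functor \(\phi = \tau^{l+1}[1]\) from Definition \ref{Aflip}; this is a direct check using the explicit formula \(S(p,q) = (p+q, n+1-q)\) from Table \ref{tableOfS}, together with a short verification that \(\theta^{nv+l+1}S\) generates an admissible group. I expect these to be the only non-formal ingredients.
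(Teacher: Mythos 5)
Your proposal is correct and follows essentially the same route as the paper: check that $\tau^{nv}\phi$ satisfies the hypotheses of Theorem \ref{KellerTriOrb}, note that $M_{l,v}$ is representation-finite, self-injective, basic and standard of type $(\A_{2l+1},v,2)$, match the two AR-quivers as quotients of $(\Z\A_n,\theta)$ by the automorphism induced by $\tau^{nv}\phi=\tau^{nv+l+1}[1]$ (which agrees with Riedtmann's $\theta^{(2l+1)v}\theta^{l+1}S$), and conclude by Corollary \ref{sameARquiver}. You spell out the translation-quiver identification slightly more explicitly than the paper does, but the argument is the same.
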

\begin{proof}
Since \(nv\geq 1\), we know that \(\tau^{nv}\phi\) fulfils the requirements on \(F\) in theorem \ref{KellerTriOrb}. Hence  \(D^b(k\A_{2l+1})/\tau^{nv}\phi\) is triangulated. The algebra \(M_{l,v}\) is a representation-finite, self-injective, basic algebra, whose stable module category is standard by theorem \ref{standardRFS}. The conclusion follows from Corollary \ref{sameARquiver}.
\end{proof}

\section{Type \(\D\)}\label{typeD}

We will now look in detail at the classes of self-injective algebras that have AR-quivers of the form \(\mathbb Z\D_n/G\). For this purpose we will make use of the detailed list of representatives of the standard types of representation-finite self-injective algebras provided as an appendix to \cite{asashiba1}. There are, as indicated by theorem \ref{standardRFS}, four cases to consider that are standard. Three of these share the same quiver but have different sets of relations, the last type has an entirely different quiver. 

\begin{figure}[bh]
\centering
\begin{tikzpicture}[font=\footnotesize, scale=1.1]
\node(g1) at (-1,1){\(\circ\)}; 
\node(g2) at (-1,0){\(\circ\)};
\node(g3) at (-1,-1){\(\circ\)};
\node(g4) at (0,-1){\(\circ\)};
\node(g5) at (1,-1){\(\circ\)}; 
\node(g6) at (1,0){};
\node(g7) at (1,0.7){};
\node(g8) at (0,1){};
\node(g9) at (0.7,1){};
\draw[->](g8)--node[above right]{\(\gamma^{s-1}_1\)}(g1);
\draw[->](g1)--node[right]{\(\gamma^0_0\)}(g2);
\draw[->](g2)--node[right]{\(\gamma^0_1\)}(g3);
\draw[->](g3)--node[above right]{\(\gamma^1_0\)}(g4);
\draw[->](g4)--node[above left]{\(\gamma^1_1\)}(g5);
\draw[->](g5)--node[left]{\(\gamma^2_0\)}(g6);
\draw[dotted](g9)--(g8);
\draw[dotted](g6)--(g7);
\node(b1) at (-2,0){\(\circ\)};
\node(b2) at (0,-2){\(\circ\)};
\node(b3) at (2,0){};
\node(b4) at (0,2){};
\node(b5) at (1.3,0.7){};
\node(b6) at (0.7,1.3){};
\draw[->](g1)--node[above, very near end]{\(\beta^0_0\)}(b1);
\draw[->](b1)--node[below, very near start]{\(\beta^0_1\)}(g3);
\draw[->](g3)--node[below, midway]{\(\beta^1_0\)}(b2);
\draw[->](b2)--node[below, midway]{\(\beta^1_1\)}(g5);
\draw[->](g5)--node[below,near end]{\(\beta^0_2\)}(b3);
\draw[->](b4)--node[above]{\(\beta^{s-1}_1\)}(g1);
\draw[dotted](b4)--(b6);
\draw[dotted] (b3)--(b5);
\node(a1) at (-3,1) {\(\circ\)};
\node(a2) at (-3,0.25){};
\node(a3) at (-3,-0.25){};
\node(a4) at (-3,-1){\(\circ\)};
\node(a5) at (-1,-3) {\(\circ\)};
\node(a6) at (-0.25,-3){};
\node(a7) at (0.25,-3){};
\node(a8) at (1,-3){\(\circ\)};
\node(a9) at (3,-1) {\(\circ\)};
\node(a10) at (3,-0.25){};
\node(a11) at (3,0.25){};
\node(a12) at (0.25,3){};
\node(a13) at (-0.25,3){};
\node(a14) at (-1,3){\(\circ\)};
\node(a15) at (1.25,2){};
\node(a16) at (2,1.25){};
\draw[->](g1)--node[above]{\(\alpha^0_{n-2}\)}(a1);
\draw[->](a1)--node[left]{\(\alpha^0_{n-3}\)}(a2);
\draw[dotted](a2)--(a3);
\draw[->](a3)--node[left]{\(\alpha^0_2\)}(a4);
\draw[->](a4)--node[below]{\(\alpha^0_1\)}(g3);
\draw[->](g3)--node[left]{\(\alpha^1_{n-2}\)}(a5);
\draw[->](a5)--node[below]{\(\alpha^1_{n-3}\)}(a6);
\draw[dotted](a6)--(a7);
\draw[->](a7)--node[below]{\(\alpha^1_{2}\)}(a8);
\draw[->](a8)--node[right]{\(\alpha^1_1\)}(g5);

\draw[->](g5)--node[below]{\(\alpha^2_{n-2}\)}(a9);
\draw[->](a9)--node[right]{\(\alpha^2_{n-3}\)}(a10);
\draw[dotted](a10)--(a11);
\draw[dotted](a12)--(a13);
\draw[->](a13)--node[above]{\(\alpha^{s-1}_{2}\)}(a14);
\draw[->](a14)--node[left]{\(\alpha^{s-1}_{1}\)}(g1);
\draw[dotted](a15)--(a16);
\end{tikzpicture}\caption{\((\D_n,s)\)}\label{koggerDhoved}
\end{figure}

We will now define some  automorphisms that induce functors that will be useful in later subsections. Recall that by Theorem \ref{Picard}, an automorphism on the translation quiver \((\Z \D_n, \theta)\) induces a functor on \(\Db(k\D_n\)). The definitions are given with respect to the translation quiver shown in Figure \ref{translationq}.


\begin{definition}\label{Dflip}
Let \(\xi\) be the automorphism on \((\Z \D_n,\theta)\), which exchanges the vertices \((x, n)\) and \((x, n-1)\).
 Let \(\phi\) be the autoequivalence on \(\Db(k\D_n)\) induced by \(\xi\).

%
\end{definition}

\begin{definition}\label{Dtwist}
Let \(\chi\) be the automorphism on \((\Z \D_4, \theta)\) which acts as follows:
\[
\begin{array}{c|cccc}
(x,r)		& (x,1) & (x,2) & (x,3) & (x,4)	\\\hline
\chi(x,r)	& (x-1,3) & (x,2) & (x,4) & (x+1,1)
\end{array}
\text{ for all \(x\in\Z\)}.
\]
 Let \(\rho\) be the autoequivalence on \(\Db(k\D_4)\) induced by \(\chi\).

\end{definition}

\subsection{Type \((\D_n,s,1)\)}\label{Drot1}

\begin{definition}
The representative of self-injective algebras of type \((\D_n,s,1)\) is given by the path algebra \(D_{n,s,1}:=kQ/I\) where \(Q\) is the quiver of figure \ref{koggerDhoved} and the ideal \(I\) is generated by the following set of relations:
\begin{enumerate}
\item \label{rel1}\(\alpha^i_1\alpha^i_2\cdots\alpha^i_{n-2}=\beta^i_1\beta^i_0=\gamma^i_1\gamma^i_0\) for all \(i\in\{0,\ldots,s-1\}\)
\item\label{rel2} For all \(i\in\{0,\ldots,s-1\}=\mathbb Z/\langle s\rangle,\)
\begin{align*}
&\beta^{i+1}_0\alpha^i_1=0,\;&\gamma^{i+1}_0\alpha^i_1=0,\\
& \alpha^{i+1}_{n-2}\beta^i_1=0,\;&\alpha^{i+1}_{n-2}\gamma^i_1=0,\\
&\gamma^{i+1}_0\beta^i_1=0,\;&\beta^{i+1}_0\gamma^i_1=0;
\end{align*}
\item \(\alpha^{i+1}_{j-n+2}\cdots\alpha^i_j=0\) for all \(i\in\{0,\cdots\!,s-1\}=\mathbb Z/\!\langle s\rangle\) and for all \(j\in\{1,\cdots\!,n-2\}=\mathbb Z/\langle n-2\rangle\).
\end{enumerate}
\end{definition}

\begin{example}

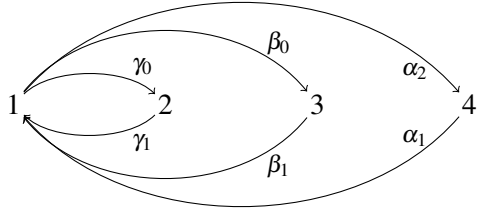
\begin{figure}[th]
\centering
\begin{tikzpicture}
\node(1) at (0,0)[inner sep=1 pt]{\(1\)};
\node(2) at (2,0)[inner sep=1 pt]{\(2\)};
\node(3) at (4,0)[inner sep=1 pt]{\(3\)};
\node(4) at (6,0)[inner sep=1 pt]{\(4\)};

\draw[->] (1) .. controls (0.5,0.5) and (1.5,.5)  .. node[above, very near end]{\footnotesize \(\gamma_0\)}(2);
\draw[->] (2) .. controls (1.5,-0.5) and (.5,-.5)  .. node[below,very  near start]{\footnotesize \(\gamma_1\)}(1);
\draw[->] (1) .. controls (1,1.25) and (3,1.25) .. node[above, very near end]{\footnotesize\(\beta_0\)}(3);
\draw[->] (3) .. controls (3,-1.25) and (1,-1.25) .. node[below, very near start]{\footnotesize\(\beta_1\)}(1);
\draw[->] (1) .. controls (1.5,1.75) and (4.5,1.75) .. node[below, very near end]{\footnotesize \(\alpha_2\)}(4);
\draw[->] (4) .. controls (4.5,-1.75) and (1.5,-1.75) .. node[above, very near start]{\footnotesize \(\alpha_1\)}(1);
\end{tikzpicture}\caption{The quiver of algebras \(D_{4,1,1},D_{4,1,2}\) and \(D_{4,1,3}\).}\label{D_{4,1,r}kogger}
\end{figure}

Let \(n=4\) and \(s=1\). The algebra \(D_{4,1,1}\) is given by the quiver in figure \ref{D_{4,1,r}kogger} with relations:
\[\alpha_1\alpha_2=\beta_1\beta_0=\gamma_1\gamma_0\]
\begin{align*}
&\alpha_2\beta_1=0\;,&&\beta_0\alpha_1=0\;,&& \gamma_0\alpha_1=0,\\
&\alpha_2\gamma_1=0\;,&&\beta_0\gamma_1=0\;,&&\gamma_0\beta_1=0,
\end{align*}
and all paths of length \(3\) are \(0\). Note that the relations in point \ref{rel2} makes it impossible to compose arrows from different loops, this leads to an AR-quiver which has cylinder shape. The AR-quiver of this algebra is shown in figure \ref{ARD_{4,1,1}}. In this case \(\smod D_{4,1,1}\) is triangle equivalent to \(\mathcal D^b(k\D_4)/\tau^5\).

\begin{figure}[ht]
\centering
 \begin{tikzpicture}[scale=0.75,yscale=-1]
 \foreach \x in {0,...,5}
  \foreach \y in {1,2,4,}
   \node (\y-\x) at (\x*2,\y) [vertex] {};
 \foreach \x in {0,...,4}
  \foreach \y in {3}
   \node (\y-\x) at (\x*2+1,\y) [vertex] {};
 \foreach \xa/\xb in {0/1,1/2,2/3,3/4,4/5}
  \foreach \ya/\yb in {1/3,2/3,4/3}
   {
    \draw [->] (\ya-\xa) -- (\yb-\xa);
    \draw [->] (\yb-\xa) -- (\ya-\xb);
   }
 \draw [dashed] (0,.5) -- (0,4.5); 
 \draw [dashed] (10,.5) -- (10,4.5);
\node(P1) at (2,0)[inner sep=1 pt]{\small \(P_1\)};
\draw[->](3-0)--(P1);
\draw[->](P1)--(3-1);
\node(P2) at (7,0)[inner sep=1 pt]{\small \(P_2\)};
\draw[->](1-3)--(P2);
\draw[->](P2)--(1-4);
\node(P3) at (7,1)[inner sep=1 pt]{\small \(P_3\)};
\draw[->](2-3)--(P3);
\draw[->](P3)--(2-4);
\node(P4) at (7,5)[inner sep=1 pt]{\small \(P_4\)};
\draw[->](4-3)--(P4);
\draw[->](P4)--(4-4);   
\end{tikzpicture}\caption{\(D_{4,1,1}\)}\label{ARD_{4,1,1}}
\end{figure}
\end{example}

The AR-quiver of the stable module category of algebras of type \((\D_n,s,1)\) is of the form \(\mathbb Z\D_n/\theta^{s(h-1)}\), where \(h\) is the Coxeter number for \(\D_n\). 

\begin{proposition}
Let \(n\geq 4\) and \(n,s\in\mathbb N\). The categories \(\smod D_{n,s,1}\) and \(\mathcal D^b(k\D_n)/\tau^{s(h-1)}\) are equivalent as triangulated categories.
\end{proposition}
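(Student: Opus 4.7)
The plan is to follow exactly the template established for the Nakayama and M\"obius algebras: first invoke Theorem~\ref{KellerTriOrb} to ensure that the orbit side is triangulated, then check the standing hypotheses of Corollary~\ref{sameARquiver}, and finally match the two AR-quivers as translation quivers so that the corollary applies.

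For the triangulated-category side, since $s(2n-3)\geq 1$ the functor $\tau^{s(2n-3)}$ satisfies the conditions on $F$ in Theorem~\ref{KellerTriOrb}, so $\Db(k\D_n)/\tau^{s(2n-3)}$ is triangulated. By Theorem~\ref{meshDeriv} and the discussion following it, its AR-quiver is isomorphic as a translation quiver to $(\Z\D_n,\theta)/\langle\theta^{s(2n-3)}\rangle$. For the module side, $D_{n,s,1}$ is by construction a representation-finite, self-injective, basic algebra, hence Frobenius, so $\smod D_{n,s,1}$ is a finite algebraic triangulated category, and it is standard by Theorem~\ref{standardRFS}.

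It then remains to compare AR-quivers. By the definition of the type of a representation-finite self-injective indecomposable algebra (Definition~\ref{defType}) together with Riedtmann's description, the AR-quiver of $\smod D_{n,s,1}$ has the form $\Z\D_n/\langle \phi\tau^{-r}\rangle$ with $r/m_{\D_n}=f(D_{n,s,1})=s$, so $r=s(2n-3)$, and with $\phi$ of order $t(D_{n,s,1})=1$, i.e.\ $\phi=\mathrm{id}$. Thus the AR-quiver of $\smod D_{n,s,1}$ is $\Z\D_n/\langle\theta^{s(2n-3)}\rangle$, matching the orbit side; an explicit isomorphism of translation quivers is constructed exactly as in the Nakayama case, by sending each indecomposable to the unique coordinate in a fundamental domain determined by its position in the mesh. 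Corollary~\ref{sameARquiver} then yields a triangle equivalence $\smod D_{n,s,1}\simeq \Db(k\D_n)/\tau^{s(2n-3)}$.

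The only subtle point — and the one that does real work — is the identification of the stable AR-quiver of $D_{n,s,1}$ with $\Z\D_n/\langle\theta^{s(2n-3)}\rangle$; but this identification is the whole content of Asashiba's type $(\D_n,s,1)$, and once the frequency $s$ and torsion order $1$ are read off from the defining relations the quotient translation quiver is determined uniquely up to isomorphism. The admissibility of the group $\langle\theta^{s(2n-3)}\rangle$ (needed for the quotient to be a well-defined stable translation quiver) is automatic since $s(2n-3)$ is larger than the Loewy length $m_{\D_n}=2n-3$ whenever $s\geq 1$ by a strict comparison on the fundamental domain, so no orbit can intersect $\{x\}\cup x^\pm$ in more than one point.
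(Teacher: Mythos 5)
Your proposal is correct and follows essentially the same route as the paper: verify Keller's conditions so that \(\Db(k\D_n)/\tau^{s(h-1)}\) is triangulated, note that \(D_{n,s,1}\) is representation-finite, self-injective and basic with standard stable category by Theorem~\ref{standardRFS}, identify the stable AR-quiver as \(\mathbb Z\D_n/\langle\theta^{s(h-1)}\rangle\) via Riedtmann/Asashiba, and conclude by Corollary~\ref{sameARquiver}. One tiny slip: your admissibility argument claims \(s(2n-3)>m_{\D_n}\) strictly, which fails for \(s=1\) where equality holds; admissibility of \(\langle\theta^{s(2n-3)}\rangle\) is nonetheless automatic, since no vertex of \(\Z\D_n\) has a successor or predecessor in its own \(\theta\)-orbit.
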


\begin{proof}
Since \(s(h-1)>0\) the functor \(\tau^{s(h-1)}\) satisfies the conditions of theorem \ref{KellerTriOrb}, so the category \(\mathcal D^b(k\D_n)/\tau^{s(h-1)}\) is triangulated. 
The algebra \(D_{n,s,1}\) is a representation-finite, self-injective, basic algebra, whose stable module category is standard by theorem \ref{standardRFS}. The conclusion follows from Corollary \ref{sameARquiver}.
\end{proof}

\subsection{Type \((\D_n,s,2)\)}\label{Drot2}

\begin{definition}
The representative of self-injective algebras of type \((\D_n,s,2)\) is given by the path algebra \(D_{n,s,2}:=kQ/I\) where \(Q\) is the quiver of figure \ref{koggerDhoved} and the ideal \(I\) is generated by the following set of relations:
\begin{enumerate}
\item \(\alpha^i_1\alpha^i_2\cdots\alpha^i_{n-2}=\beta^i_1\beta^i_0=\gamma^i_1\gamma^i_0\) for all \(i\in\{0,\ldots,s-1\}\)
\item\label{rel22} for all \(i\in\{0,\ldots,s-1\}=\mathbb Z/\langle s\rangle\),
\begin{align*}
\beta^{i+1}_0\alpha^i_1&=0\; &\gamma^{i+1}_0\alpha^i_1=0,\\
\alpha^{i+1}_{n-2}\beta^i_1&=0\; &\alpha^{i+1}_{n-2}\gamma^i_1=0,
\end{align*}
and for all \(i\in\{0,\ldots,s-2\}\),
\begin{align*}
\gamma^{i+1}_0\beta^i_1&=0\; &\beta^{i+1}_0\gamma^i_1=0,\\
\beta^0_0\beta^{s-1}_1&=0,\; &\gamma^0_0\gamma^{s-1}_1=0;
\end{align*}
\item \(\alpha\)-paths of length \(n-1\) are zero, and for all \(i\in\{0,\ldots,s-2\}\),
\begin{align*}
\beta^{i+1}_0\beta^i_1\beta^i_0&=0,\; &\gamma^{i+1}_0\gamma^i_1\gamma^i_0&=0,\\
\beta^{i+1}_1\beta^{i+1}_0\beta^i_1&=0,\; & \gamma^{i+1}_1\gamma^{i+1}_0\gamma^i_1&=0, \text{ and}\\
\gamma^0_0\beta^{s-1}_1\beta^{s-1}_0&=0,\; &\beta^0_0\gamma^{s-1}_1\gamma^{s-1}_0&=0,\\
\gamma^0_1\gamma^0_0\beta^{s-1}_1&=0,\; &\beta^0_1\beta^0_0\gamma^{s-1}_1&=0.
\end{align*}
\end{enumerate}
\end{definition}

\begin{example}

Let \(n=4\) and \(s=1\). The algebra \(D_{4,1,2}\) is given by the quiver in figure \ref{D_{4,1,r}kogger} with relations:
\[\alpha_1\alpha_2=\beta_1\beta_0=\gamma_1\gamma_0\]
\begin{align*}
&\alpha_2\beta_1=0\;,&&\beta_0\alpha_1=0\;,&& \gamma_0\alpha_1=0,\\
&\alpha_2\gamma_1=0\;,&&\beta_0\beta_1=0\;,&& \gamma_0\gamma_1=0,
\end{align*}
and all paths of length \(3\) are \(0\). The AR-quiver of this algebra is shown in figure \ref{ARD_{4,1,2}}. This time the zero relations in point \ref{rel22} glues together two of the \(\tau\)-orbits of \(\mathbb Z\D_4\). In this case \(\smod D_{4,1,2}\) is triangle equivalent to \(\mathcal D^b(k\D_4)/\tau^5\phi\).

\begin{figure}[ht]
\centering
 \begin{tikzpicture}[scale=0.75,yscale=-1]
 \foreach \x in {0,...,5}
  \foreach \y in {1,2,4,}
   \node (\y-\x) at (\x*2,\y) [vertex] {};
 \foreach \x in {0,...,4}
  \foreach \y in {3}
   \node (\y-\x) at (\x*2+1,\y) [vertex] {};
 \foreach \xa/\xb in {0/1,1/2,2/3,3/4,4/5}
  \foreach \ya/\yb in {1/3,2/3,4/3}
   {
    \draw [->] (\ya-\xa) -- (\yb-\xa);
    \draw [->] (\yb-\xa) -- (\ya-\xb);
   }
 \draw [dashed] (0,.5) -- (0,4.5); 
 \draw [dashed] (10,.5) -- (10,4.5);
\node(P1) at (2,0)[inner sep=1 pt]{\small \(P_1\)};
\draw[->](3-0)--(P1);
\draw[->](P1)--(3-1);
\node(P2) at (7,0)[inner sep=1 pt]{\small \(P_2\)};
\draw[->](1-3)--(P2);
\draw[->](P2)--(1-4);
\node(P3) at (7,1)[inner sep=1 pt]{\small \(P_3\)};
\draw[->](2-3)--(P3);
\draw[->](P3)--(2-4);
\node(P4) at (7,5)[inner sep=1 pt]{\small \(P_4\)};
\draw[->](4-3)--(P4);
\draw[->](P4)--(4-4); 
\replacevertex {(1-0)}{[fill=white, inner sep=2pt]{\(\star\)}}
\replacevertex {(2-5)}{[fill=white, inner sep=2pt]{\(\star\)}}
\replacevertex{(2-0)} {[fill=white, inner sep=2pt]{\(\circ\)}}
\replacevertex{(1-5)} {[fill=white, inner sep=2pt]{\(\circ\)}}
  
\end{tikzpicture}\caption{\(\modf D_{4,1,2}\). The quiver is glued together by identifying the matching symbols on either side.}\label{ARD_{4,1,2}}
\end{figure}

\end{example}

The AR-quiver of the stable module category of algebras of type \((\D_n,s,2)\) is of the form \(\mathbb Z\D_n/\theta^{s(h-1)}\xi\), where \(h\) is the Coxeter number for \(\D_n\), and \(\xi\) is the automorphism described in Definition \ref{Dflip}.

\begin{proposition}
Let \(n\leq4\) and \(s,n\!\in\mathbb N\). The categories \(\smod D_{n,s,2}\) and \linebreak\(\mathcal D^b(k\D_n)/\tau^{s(h-1)}\phi\) are equivalent as triangulated categories.
\end{proposition}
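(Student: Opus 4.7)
The plan is to mimic the proof of the preceding proposition for type \((\D_n,s,1)\), since both reduce to an application of Corollary \ref{sameARquiver} once the three ingredients—triangulated orbit category, standard algebraic stable category, and matching AR-quivers—are in place.

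First, I would check that the autoequivalence \(F=\tau^{s(h-1)}\phi\) satisfies the hypotheses of Theorem \ref{KellerTriOrb}, so that \(\Db(k\D_n)/\tau^{s(h-1)}\phi\) is genuinely triangulated. Since \(s(h-1)\geq 2n-3>0\) and \(\phi\) is induced by the finite-order quiver automorphism \(\xi\) of Definition \ref{Dflip}, the orbit of any indecomposable of \(\Db(k\D_n)\) under \(F\) meets \(\modf k\D_n\) (viewed inside the derived category) in only finitely many shifts, and every indecomposable has some \(F\)-iterate of the form \(U[n]\) for \(0\leq n\leq 1\) and \(U\) indecomposable in \(\modf k\D_n\).

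Next, I would record that \(\smod D_{n,s,2}\) is finite, algebraic, and standard: finiteness is the representation-finite hypothesis; algebraicity is automatic because \(D_{n,s,2}\) is self-injective, hence its module category is Frobenius, and the stable category of any Frobenius category is algebraic; and standardness is precisely the content of Theorem \ref{standardRFS} applied to the type \((\D_n,s,2)\).

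The key step is to identify the AR-quivers as translation quivers. The paragraph preceding the statement already asserts that the AR-quiver of \(\smod D_{n,s,2}\) is isomorphic to \(\Z\D_n/\langle \theta^{s(h-1)}\xi\rangle\); this follows from Riedtmann's classification together with the observation that the asymmetric zero-relations in point (2) of the definition of \(D_{n,s,2}\) glue two \(\tau\)-orbits together in precisely the pattern implemented by \(\xi\) (one can check this in small cases, as illustrated by the example \(D_{4,1,2}\)). On the derived side, since \(\phi\) is by definition the autoequivalence induced by \(\xi\), the AR-quiver of \(\Db(k\D_n)/\tau^{s(h-1)}\phi\) is \(\Z\D_n/\langle \theta^{s(h-1)}\xi\rangle\) as well. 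An explicit translation-quiver isomorphism can be written in the same style as the one exhibited in the Nakayama case.

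With the two AR-quivers identified and the algebra verified to be standard, Corollary \ref{sameARquiver} yields the desired triangle equivalence. The only nontrivial point is the identification of the automorphism governing the stable AR-quiver of \(D_{n,s,2}\) as exactly \(\theta^{s(h-1)}\xi\); this is the step where the precise form of the relations in (2) and (3) is used, and all other parts are formal consequences of the machinery assembled in Sections 2–5.
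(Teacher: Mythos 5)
Your proof follows the same route as the paper's: verify Keller's conditions so the orbit category is triangulated, note that \(D_{n,s,2}\) is representation-finite, self-injective and basic with standard stable category by Theorem \ref{standardRFS}, match the AR-quivers as \(\Z\D_n/\langle\theta^{s(h-1)}\xi\rangle\), and conclude via Corollary \ref{sameARquiver}. The paper's proof is in fact terser than yours — it leaves the AR-quiver identification to the preceding paragraph and Riedtmann's classification, exactly as you flag — so your proposal is correct and, if anything, slightly more explicit about the one genuinely nontrivial step.
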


\begin{proof}
Since \(s(h-1)>0\) the functor \(\tau^{s(h-1)}\phi\) satisfies the conditions given in theorem \ref{KellerTriOrb}. Hence the category \(\mathcal D^b(k\D_n)/\tau^{s(h-1)}\phi\) is triangulated. 
The algebra \(D_{n,s,2}\) is a representation-finite, self-injective, basic algebra, whose stable module category is standard by theorem \ref{standardRFS}. The conclusion follows from Corollary \ref{sameARquiver}.
\end{proof}

\subsection{Type \((\D_4,s,3)\)}\label{Drot3}

\begin{definition}
The representative of self-injective algebras of type \((\D_4,s,3)\) is given by the path algebra \(D_{4,s,3}:=kQ/I\) where \(Q\) is the quiver of figure \ref{koggerDhoved} and the ideal \(I\) is generated by the following set of relations:
\begin{enumerate}
\item The same relations as for \((\D_4,s,1)\), part \ref{rel1}.
\item For all \(i\in\{0,\ldots,s-2\}\)
\begin{align*}
\beta^{i+1}_0\alpha^i_1&=0,\; & \gamma^{i+1}_0\alpha^i_1&=0,\\
\alpha^{i+1}_0\beta^i_1&=0,\; & \gamma^{i+1}_0\beta^i_1&=0,\\
\alpha^{i+1}_0\gamma^i_1&=0,\; & \beta^{i+1}_0\gamma^i_1&=0,\text{ and}\\
\alpha^0_0\alpha^{s-1}_1&=0,\; & \gamma^0_0\alpha^{s-1}_1&=0,\\
\alpha^0_0\beta^{s-1}_1&=0,\; & \beta^0_0\beta^{s-1}_1&=0,\\
\beta^0_0\gamma^{s-1}_1&=0,\; & \gamma^0_0\gamma^{s-1}_1&=0;
\end{align*}
\item all paths of length \(3\) are zero.
\end{enumerate}
\end{definition}

\begin{example}

Let \(n=4\) and \(s=1\). The algebra \(D_{4,1,3}\) is given by the quiver in figure \ref{D_{4,1,r}kogger} with relations:
\[\alpha_1\alpha_2=\beta_1\beta_0=\gamma_1\gamma_0\]
\begin{align*}
&\alpha_0\alpha_1=0\;,&&\alpha_0\beta_1=0\;,&& \beta_0\gamma_1=0,\\
&\gamma_0\alpha_1=0\;,&&\beta_0\beta_1=0\;,&& \gamma_0\gamma_1=0,
\end{align*}
and all paths of length \(3\) are \(0\). The AR-quiver of this algebra is shown in figure \ref{ARD_{4,1,3}}. As the figure shows, three of the \(\tau\)-orbits of \(\mathbb Z\D_4\) are glued together, this is due to the zero relations of length two. In this case \(\smod D_{4,1,3}\) is triangle equivalent to \(\mathcal D^b(k\D_4)/\tau^5\rho\).

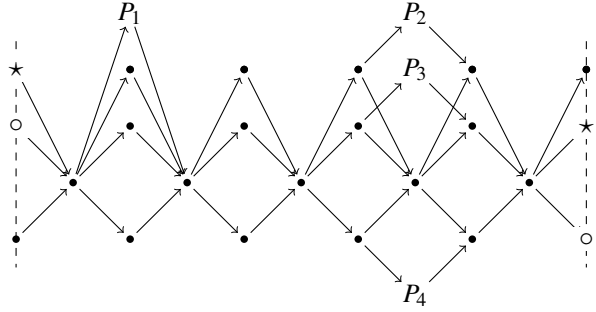
\begin{figure}[ht]
\centering
 \begin{tikzpicture}[scale=0.75,yscale=-1]
 \foreach \x in {0,...,5}
  \foreach \y in {1,2,4,}
   \node (\y-\x) at (\x*2,\y) [vertex] {};
 \foreach \x in {0,...,4}
  \foreach \y in {3}
   \node (\y-\x) at (\x*2+1,\y) [vertex] {};
 \foreach \xa/\xb in {0/1,1/2,2/3,3/4,4/5}
  \foreach \ya/\yb in {1/3,2/3,4/3}
   {
    \draw [->] (\ya-\xa) -- (\yb-\xa);
    \draw [->] (\yb-\xa) -- (\ya-\xb);
   }
 \draw [dashed] (0,.5) -- (0,4.5); 
 \draw [dashed] (10,.5) -- (10,4.5);
\node(P1) at (2,0)[inner sep=1 pt]{\small \(P_1\)};
\draw[->](3-0)--(P1);
\draw[->](P1)--(3-1);
\node(P2) at (7,0)[inner sep=1 pt]{\small \(P_2\)};
\draw[->](1-3)--(P2);
\draw[->](P2)--(1-4);
\node(P3) at (7,1)[inner sep=1 pt]{\small \(P_3\)};
\draw[->](2-3)--(P3);
\draw[->](P3)--(2-4);
\node(P4) at (7,5)[inner sep=1 pt]{\small \(P_4\)};
\draw[->](4-3)--(P4);
\draw[->](P4)--(4-4); 
\replacevertex {(1-0)}{[fill=white, inner sep=2pt]{\(\star\)}}
\replacevertex {(2-5)}{[fill=white, inner sep=2pt]{\(\star\)}}
\replacevertex {(2-0)}{[fill=white, inner sep=2pt]{\(\circ\)}}
\replacevertex {(4-5)}{[fill=white, inner sep=2pt]{\(\circ\)}}
  
\end{tikzpicture}\caption{\(\modf D_{4,1,3}\). The quiver is glued together by identifying the matching symbols on either side.}\label{ARD_{4,1,3}}
\end{figure}

\end{example}

In general the AR-quiver of the stable module category of algebras of type \((\!\D_4,s,3\!)\) is of the form \(\mathbb Z\D_n/\theta^{5s}\chi\), where \(\chi\) is the automorphism of order \(3\) described in Definition \ref{Dtwist}.

\begin{proposition}
Let \(n=4\) and \(s\in\mathbb N\). The categories \(\smod D_{4,s,3}\) and \(\mathcal D^b(k\D_4)/\tau^{5s}\rho\) are equivalent as triangulated categories.
\end{proposition}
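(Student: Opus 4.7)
The proof plan follows exactly the template established in the two preceding propositions of Section \ref{typeD}, so it will be short. The three ingredients I need are: (i) that the orbit category on the right is actually triangulated, (ii) that the stable module category on the left is standard, and (iii) that the two translation quivers coincide, so Corollary \ref{sameARquiver} applies.

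First I would verify that the autoequivalence \(\tau^{5s}\rho\) on \(\Db(k\D_4)\) satisfies the hypotheses of Theorem \ref{KellerTriOrb}. Since \(5s \geq 1\) and \(\rho\) is the autoequivalence induced by the order-\(3\) automorphism \(\chi\) of Definition \ref{Dtwist}, the functor \(\tau^{5s}\rho\) acts on the AR-quiver \((\Z\D_4,\theta)\) by a shift of \(5s\) units in the \(\Z\)-direction (followed by a finite-order twist), so each indecomposable of \(\modf k\D_4\) appears in only finitely many places along its orbit, and every orbit hits the standard heart shifted by bounded amounts. This gives the triangulated structure on \(\Db(k\D_4)/\tau^{5s}\rho\).

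Second, the algebra \(D_{4,s,3}\) is representation-finite, self-injective, and basic; by Theorem \ref{standardRFS} its type \((\D_4, s, 3)\) lies in the list of standard types, so \(\smod D_{4,s,3}\) is standard in the sense required by Corollary \ref{sameARquiver}, and it is algebraic since \(D_{4,s,3}\) is Frobenius.

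Third, I match the AR-quivers. By the discussion preceding the proposition, the AR-quiver of \(\smod D_{4,s,3}\) is the stable translation quiver \((\Z\D_4,\theta)/\langle \theta^{5s}\chi\rangle\). On the other side, by the general principle recorded after Theorem \ref{KellerTriOrb}, the AR-quiver of \(\Db(k\D_4)/\tau^{5s}\rho\) is \((\Z\D_4,\theta)/\langle \theta^{5s}\chi\rangle\) as well, because \(\tau\) acts as \(\theta\) and \(\rho\) was defined precisely to induce \(\chi\). Admissibility of \(\langle\theta^{5s}\chi\rangle\) is immediate from \(5s\geq 1\). Hence the two translation quivers are isomorphic, and Corollary \ref{sameARquiver} yields the desired triangle equivalence.

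The only place where something could potentially go wrong is the identification between the automorphism \(\chi\) of \((\Z\D_4,\theta)\) and the categorical autoequivalence \(\rho\) of \(\Db(k\D_4)\); this is the content of Theorem \ref{Picard}, which gives the correspondence between translation-quiver automorphisms and derived Picard group elements, so no genuine obstacle arises. Everything else is a routine instance of the pattern already used for types \((\D_n,s,1)\) and \((\D_n,s,2)\).
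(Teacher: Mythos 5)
Your proposal is correct and follows essentially the same route as the paper: verify the hypotheses of Theorem \ref{KellerTriOrb} for \(\tau^{5s}\rho\) using \(5s>0\), invoke Theorem \ref{standardRFS} to see that \(\smod D_{4,s,3}\) is standard (and algebraic since the algebra is Frobenius), identify both AR-quivers with \((\Z\D_4,\theta)/\langle\theta^{5s}\chi\rangle\), and conclude by Corollary \ref{sameARquiver}. The extra detail you supply on the \(\chi\)--\(\rho\) correspondence via Theorem \ref{Picard} is implicit in the paper's Definition \ref{Dtwist} and is a welcome clarification, not a deviation.
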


\begin{proof}
Since \(5s>0\), the functor \(\tau^{5s}\rho\) satisfies the conditions given in theorem \ref{KellerTriOrb}. Hence the category \(\mathcal D^b(k\!\D_4\!)/\tau^{5s}\rho\) is triangulated. 
The algebra \(D_{n,s,3}\) is a representation-finite, self-injective, basic algebra, whose stable module category is standard by theorem \ref{standardRFS}. The conclusion follows from Corollary \ref{sameARquiver}.
\end{proof}

\subsection{Type \((\D_{3m},\frac{s}{3},1)\) }\label{D3m} 
This is the only type of tree type \(\D\) where the frequency is not an integer. If \(3|s\), then the type is already described, in section \ref{Drot1}; hence we require that \(s\) is not divisible by 3.

\begin{definition}
Let \(m\geq 2\) and \(s\geq1\) with \(3\nmid s\). The representative of self-injective algebras of type \((\D_{3m},\frac{s}{3},1)\) is given by the path algebra \(D_{{3m},\frac{s}{3},1}:=kQ/I\) where \(Q\) is the quiver of figure \ref{koggerDhoved} and the ideal \(I\) is generated by the following set of relations:
\begin{enumerate}
\item \(\alpha^i_m\cdots\alpha^i_2\alpha^i_1=\beta_{i+1}\beta_i\) for all \(i\in\{1,\ldots,s\}=\mathbb Z/\langle s\rangle\);
\item \(\alpha^{i+2}_1\alpha^i_m=0\) for all \(i\in\{1,\ldots,s\}=\mathbb Z/\langle s \rangle;\)
\item \(\alpha^{i+3}_j\!\cdots\alpha^{i+3}_1\beta_{i+2}\alpha^i_m\!\cdots\alpha^i_j\!=\!0\) for all \(i\in\{\!1,\ldots, s\!\}=\mathbb Z/\!\langle s \rangle\) and for all \(j\in\{\!1,\ldots,m\!\}\)
\end{enumerate}
\end{definition}

\begin{figure}[ht]
\centering
\begin{tikzpicture}[font=\footnotesize, scale=1.1]
\node(b1) at (0,0){\(\circ\)};
\node[rotate around={45:(0,-3)}](b2) at (0,0){\(\circ\)};
\node[rotate around={90:(0,-3)}](b3) at (0,0){\(\circ\)};
\node[rotate around={135:(0,-3)}](b4) at (0,0){\(\circ\)};
\node[rotate around={180:(0,-3)}](b5) at (0,0){\(\circ\)};
\node[rotate around={225:(0,-3)}](b6) at (0,0){\(\circ\)};
\node[rotate around={255:(0,-3)}](b7) at (0,0){};
\node[rotate around={265:(0,-3)}](b8) at (0,0){};
\node[rotate around={275:(0,-3)}](b9) at (0,0){};
\node[rotate around={285:(0,-3)}](b10) at (0,0){};
\node[rotate around={315:(0,-3)}](b11) at (0,0){};
\draw[->](b1)--node[below]{\(\beta_1\)}(b2);
\draw[->](b2)--node[right]{\(\beta_2\)}(b3);
\draw[->](b3)--node[right]{\(\beta_3\)}(b4);
\draw[->](b4)--node[above]{\(\beta_4\)}(b5);
\draw[->](b5)--node[above]{\(\beta_5\)}(b6);
\draw[->](b6)--node[left]{\(\beta_6\)}(b7);
\draw[dotted](b7)--(b8);
\draw[dotted](b9)--(b10);
\draw[->](b10)--node[left]{\(\beta_{s-1}\)}(b11);
\draw[->](b11)--node[below]{\(\beta_{s}\)}(b1);
\node(a0) at (0,1){};
\node[rotate around={15:(0,-4)}](a1) at (0,1){\(\circ\)};
\node[rotate around={30:(0,-4)}](a2) at (0,1){\(\circ\)};
\node[rotate around={60:(0,-4)}](a3) at (0,1){\(\circ\)};
\node[rotate around={75:(0,-4)}](a4) at (0,1){\(\circ\)};
\node[rotate around={105:(0,-4)}](a5) at (0,1){\(\circ\)};
\node[rotate around={120:(0,-4)}](a6) at (0,1){\(\circ\)};
\node[rotate around={150:(0,-4)}](a7) at (0,1){\(\circ\)};
\node[rotate around={165:(0,-4)}](a8) at (0,1){\(\circ\)};
\node[rotate around={195:(0,-4)}](a9) at (0,1){\(\circ\)};
\node[rotate around={210:(0,-4)}](a10) at (0,1){\(\circ\)};
\node[rotate around={240:(0,-4)}](a11) at (0,1){};
\node[rotate around={300:(0,-4)}](a12) at (0,1){};
\node[rotate around={330:(0,-4)}](a13) at (0,1){\(\circ\)};
\node[rotate around={345:(0,-4)}](a14) at (0,1){\(\circ\)};
\node(A0) at (0,2){};
\node[rotate around={35:(0,-5)}](A1) at (0,2){};
\node[rotate around={55:(0,-5)}](A2) at (0,2){};
\node[rotate around={80:(0,-5)}](A3) at (0,2){};
\node[rotate around={100:(0,-5)}](A4) at (0,2){};
\node[rotate around={125:(0,-5)}](A5) at (0,2){};
\node[rotate around={145:(0,-5)}](A6) at (0,2){};
\node[rotate around={170:(0,-5)}](A7) at (0,2){};
\node[rotate around={190:(0,-5)}](A8) at (0,2){};
\node[rotate around={215:(0,-5)}](A9) at (0,2){};
\node[rotate around={235:(0,-5)}](A10) at (0,2){};
\node[rotate around={255:(0,-5)}](A11) at (0,2){};
\node[rotate around={265:(0,-5)}](A12) at (0,2){};
\node[rotate around={275:(0,-5)}](A17) at (0,2){};
\node[rotate around={285:(0,-5)}](A18) at (0,2){};
\node[rotate around={305:(0,-5)}](A13) at (0,2){};
\node[rotate around={330:(0,-5)}](A14) at (0,2){};
\node[rotate around={350:(0,-5)}](A15) at (0,2){};
\node[rotate around={15:(0,-5)}](A16) at (0,2){};
\draw[->](b1)--node[left, near start]{\(\alpha^{1}_{1}\)}(a1);
\draw[->](a2)--node[right]{\(\alpha^{s}_{m}\)}(b2);
\draw[->](b2)--node[below]{\(\alpha^{2}_{1}\)}(a3);
\draw[->](a4)--node[right, near start]{\(\alpha^{1}_{m}\)}(b3);
\draw[->](b3)--node[below, near start]{\(\alpha^{3}_{1}\)}(a5);
\draw[->](a6)--node[above]{\(\alpha^{2}_{m}\)}(b4); 
\draw[->](b4)--node[right]{\(\alpha^{4}_{1}\)}(a7);
\draw[->](a8)--node[above, near start]{\(\alpha^{3}_{m}\)}(b5);
\draw[->](b5)--node[above, near end]{\(\alpha^{5}_{1}\)}(a9);
\draw[->](a10)--node[left]{\(\alpha^{4}_{m}\)}(b6);
\draw[->](b6)--node[above]{\(\alpha^{6}_{1}\)}(a11);
\draw[->](a12)--node[below]{\(\alpha^{s-2}_{m}\)}(b11);
\draw[->](b11)--node[left]{\(\alpha^{s}_{1}\)}(a13);
\draw[->](a14)--node[right]{\(\alpha^{s-1}_{m}\)}(b1);

\draw[->](a1)--node[above]{\(\alpha^{1}_{2}\)}(A1);
\draw[dotted](A1)--(A2);
\draw[->](A2)--node[left, near start]{\(\alpha^{1}_{m-1}\)}(a4);
\draw[->](a3)--node[below, near end]{\(\alpha^{2}_{2}\)}(A3);
\draw[dotted](A3)--(A4);
\draw[->](A4)--node[below, near start]{\(\alpha^{2}_{m-1}\)}(a6);
\draw[->](a5)--node[right, very near end]{\(\alpha^{3}_{2}\)}(A5);
\draw[dotted](A5)--(A6);
\draw[->](A6)--node[above, very near start]{\(\alpha^{3}_{m-1}\)}(a8);
\draw[->](a7)--node[above, very near end]{\(\alpha^{4}_{2}\)}(A7);
\draw[dotted](A7)--(A8);
\draw[->](A8)--node[right, very near start]{\(\alpha^{4}_{m-1}\)}(a10);
\draw[->](a9)--node[above, very near end]{\(\alpha^{5}_{2}\)}(A9);
\draw[dotted](A9)--(A10);
\draw[->](a11)--node[above]{\(\alpha^{6}_{2}\)}(A11);
\draw[dotted](A11)--(A12);
\draw[->](A16)--node[right,very near start]{\(\alpha^{s}_{m-1}\)}(a2);
\draw[dotted](A15)--(A16);
\draw[->](a13)--node[left, near end]{\(\alpha^{s}_{2}\)}(A15);
\draw[dotted](A13)--(A14);
\draw[->](A14)--node[below, very near start]{\(\alpha^{s-1}_{m-1}\)}(a14);
\draw[dotted](A17)--(A18);
\draw[->](A18)--node[below]{\(\alpha^{s-2}_{m-1}\)}(a12);

\end{tikzpicture}\caption{\((\D_{3m},\frac{s}{3})\)}\label{koggerDspesial}
\end{figure}

\begin{example}

\begin{figure}[ht]
\centering
\begin{tikzpicture}
\node(1) at (0,0)[inner sep=1 pt]{\(1\)};
\node(2) at (2,0)[inner sep=1 pt]{\(2\)};

\draw[->] (2) .. controls (1.5,0.5) and (0.5,.5)  .. node[above]{\footnotesize \(\alpha_2\)}(1);
\draw[->] (1) .. controls (.5,-0.5) and (1.5,-.5)  .. node[below]{\footnotesize \(\alpha_1\)}(2);
\draw[->] (1) .. controls (-1,-1.25) and (-1,1.25) .. node[left]{\footnotesize \(\beta\)}(1);
\end{tikzpicture}\caption{Quiver of the path algebra \(D_{6,\frac{1}{3},1}\)}\label{D_{6,1/3,1}kogger}
\end{figure}
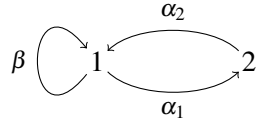

Let \(m=2\) and \(s=1\). The algebra \(D_{6,\frac{1}{3},1}\) is given by the quiver in figure \ref{D_{6,1/3,1}kogger} with relations:
\begin{align*}
\beta^2&=\alpha_2\alpha_1 & \alpha_1\alpha_2&=0\\
\alpha_1\beta\alpha_2\alpha_1&=0 & \alpha_2\alpha_1\beta\alpha_2&=0.
\end{align*}
The AR-quiver of this algebra is shown in figure \ref{ARD_{6,1/3,1}}.  In this case \(\smod D_{6,\frac{1}{3},1}\) is triangle equivalent to \(\mathcal D^b(k\D_6)/\tau^3\).

\begin{figure}[ht]
\centering
 \begin{tikzpicture}[scale=.75,yscale=-1]
 \foreach \x in {0,...,3}
  \foreach \y in {1,2,4,6}
   \node (\y-\x) at (\x*2,\y) [vertex] {};
 \foreach \x in {0,...,2}
  \foreach \y in {3,5}
   \node (\y-\x) at (\x*2+1,\y) [vertex] {};
 \foreach \xa/\xb in {0/1,1/2,2/3}
  \foreach \ya/\yb in {1/3,2/3,4/3,4/5,6/5}
   {
    \draw [->] (\ya-\xa) -- (\yb-\xa);
    \draw [->] (\yb-\xa) -- (\ya-\xb);
   }
 \draw [dashed] (0,.5) -- (0,6.5); 
 \draw [dashed] (6,.5) -- (6,6.5);  
\node (P1) at (3,1)[inner sep=1 pt]{\small \(P_1\)};
\draw[->] (2-1) -- (P1);
\draw[->] (P1) --(2-2);
\node (P2) at (3,7)[inner sep=1 pt]{\small \(P_2\)};
\draw[->] (6-1)--(P2);
\draw[->] (P2)--(6-2);
  
\end{tikzpicture}\caption{\(D_{6,\frac{1}{3},1}\)}\label{ARD_{6,1/3,1}}
\end{figure}
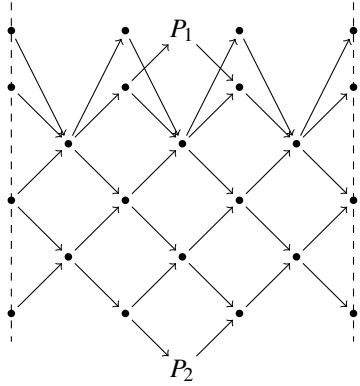
\end{example}

The AR-quiver of the stable module category of algebras of type \((\D_{3m},\frac{s}{3},1)\) is of the form \(\mathbb Z\D_{3m}/\theta^{s(h-1)/3}\), where \(h\) is the Coxeter number for \(\D_{3m}\). (Note that since \(h-1=2n-3=6m-3\) we have that \(s(h-1)/3\) is a natural number).

\begin{proposition}
Let \(m\geq 2\) and \(s\geq1\) with \(3\!\nmid\! s\). Then the categories \(\smod D_{{3m},\frac{s}{3},1}\) and \(\mathcal D^b(k\D_{3m})/\tau^{s(h-1)/3}\) are equivalent as triangulated categories.
\end{proposition}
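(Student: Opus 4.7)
The plan is to follow the same three-step template that the earlier propositions in this section have established. First, I will check that the autoequivalence $\tau^{s(h-1)/3}$ satisfies the hypotheses of Theorem \ref{KellerTriOrb}, so that the orbit category $\mathcal D^b(k\D_{3m})/\tau^{s(h-1)/3}$ carries a natural triangulated structure. Since $h = 2(3m)-2 = 6m-2$, we have $h-1 = 6m-3 = 3(2m-1)$, so $s(h-1)/3 = s(2m-1)$ is a positive integer (for $m \geq 2$ and $s \geq 1$); in particular the exponent is strictly positive, so $\tau^{s(h-1)/3}$ is a nonzero power of the Auslander--Reiten translation and Keller's conditions are automatic.

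Second, I will invoke Theorem \ref{standardRFS}: the algebra $D_{3m,\frac{s}{3},1}$ is of type $(\D_{3m},\frac{s}{3},1)$, which lies on Asashiba's list, so $\smod D_{3m,\frac{s}{3},1}$ is a standard, representation-finite, finite-dimensional triangulated category. Since $D_{3m,\frac{s}{3},1}$ is by construction self-injective and basic, it is Frobenius, so $\smod D_{3m,\frac{s}{3},1}$ is algebraic as well.

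Third, the point of Corollary \ref{sameARquiver} is that for standard algebraic finite triangulated categories, isomorphism of AR-quivers as translation quivers is enough to force triangle equivalence. The AR-quiver of $\smod D_{3m,\frac{s}{3},1}$ is $\mathbb Z\D_{3m}/\langle \theta^{s(h-1)/3}\rangle$, as recorded in the paragraph preceding the proposition (this is Riedtmann's description in \cite{riedtmannA}, with the divisibility remark ensuring the exponent is an integer). On the other hand, the discussion following Theorem \ref{KellerTriOrb} shows that the AR-quiver of $\mathcal D^b(k\D_{3m})/\tau^{s(h-1)/3}$ is $(\mathbb Z\D_{3m},\theta)/(\theta^{s(h-1)/3} S^0)$, which is the same translation quiver. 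An explicit isomorphism of translation quivers can be read off exactly as in Section \ref{Nakayama}, by matching indecomposables $M$ of $\smod D_{3m,\frac{s}{3},1}$ with their coordinates in the fundamental domain of $\mathbb Z \D_{3m}$. Applying Corollary \ref{sameARquiver} closes the argument.

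The main obstacle is essentially bookkeeping: verifying that $h-1$ is divisible by $3$ so that the exponent $s(h-1)/3$ is a genuine integer, and then checking that the translation-quiver isomorphism on AR-quivers really does hold (citing \cite{riedtmannA} for one side and the $S$-table from Section 2 for the other). Once that matching is in place, the proof is a routine application of the same Keller--Asashiba--Amiot chain used in the earlier subsections, and I do not anticipate any new technical issue beyond the extra divisibility check required by the non-integer frequency $\frac{s}{3}$.
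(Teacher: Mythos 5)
Your proposal is correct and follows essentially the same route as the paper's proof: verify that $\tau^{s(h-1)/3}$ satisfies the hypotheses of Theorem \ref{KellerTriOrb} (including the divisibility check $h-1=6m-3$), note that $D_{3m,\frac{s}{3},1}$ is representation-finite, self-injective and basic with standard stable module category by Theorem \ref{standardRFS}, and conclude via Corollary \ref{sameARquiver} from the isomorphism of AR-quivers. The only difference is that you spell out the translation-quiver matching slightly more explicitly than the paper does, which is harmless.
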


\begin{proof}
Since \(s(h-1)/3>0\), the functor \(\tau^{s(h-1)/3}\) satisfies the conditions of theorem \ref{KellerTriOrb}, hence the category \(\mathcal D^b(k\D_{3m})/\tau^{s(h-1)/3}\) is triangulated. 
The algebra \(D_{{3m},\frac{s}{3},1}\) is a representation-finite, self-injective, basic algebra, whose stable module category is standard by theorem \ref{standardRFS}. The conclusion follows from Corollary \ref{sameARquiver}.
\end{proof}

\section{Type \(\E\)}\label{typeE}
We now look at self-injective algebras with AR-quivers of the form \(\mathbb Z\E_n/G\). These algebras are all standard \cite{asashiba2}, and they are divided into two main groups; those with a cylindrical AR-quiver, and those with a M\"obius-shaped AR-quiver. In Asashiba's notation, the former are of type \((E_n, s, 1)\), while the latter are of type \((\E_6, s, 2\)), see \cite{asashiba2}.
For the first group, the stable module categories will be equivalent to orbit categories using functors that are some power of the AR-translation \(\tau\). For the latter, however, we need a "flip functor" to get the M\"obius shape of the quiver.

\begin{definition}\label{Eflip}
The flip functor \(\phi\) on \(\mathcal D^b(k\E_6)\) is given by \(\phi=\tau^6 [1]\).
\end{definition}

We follow the classification due to Asashiba for the rest of the section. Note that the representative algebras all share the quiver given in figure \ref{koggerE}; however the relations are different.
\begin{figure}[ht]
\centering
\begin{tikzpicture}[font=\footnotesize, scale=1.1]
\node(g1) at (-1.5,1.5){\(\circ\)}; 
\node(g2) at (-1.5,0){\(\circ\)};
\node(g3) at (-1.5,-1.5){\(\circ\)}; 
\node(g4) at (0,-1.5){\(\circ\)};
\node(g5) at (1.5,-1.5){\(\circ\)}; 
\node(g6) at (1.5,0){};
\node(g7) at (1.5,1){};
\node(g8) at (1,1.5){};
\node(g9) at (0,1.5){};
\draw[->](g1)--node[right]{\(\gamma^{0}_{2}\)}(g2);
\draw[->](g2)--node[right]{\(\gamma^{0}_{1}\)}(g3);
\draw[->](g3)--node[above]{\(\gamma^{1}_{2}\)}(g4);
\draw[->](g4)--node[above]{\(\gamma^{1}_{1}\)}(g5);
\draw[->](g5)--node[left]{\(\gamma^{2}_{2}\)}(g6);
\draw[dotted](g6)--(g7);
\draw[dotted](g8)--(g9);
\draw[->](g9)--node[below]{\(\gamma^{s-1}_{1}\)}(g1);
\node(b1) at (-2.5,0.75){\(\circ\)};
\node(b2) at (-2.5,-0.75){\(\circ\)};
\node(b3) at (-0.75,-2.5){\(\circ\)};
\node(b4) at (0.75,-2.5){\(\circ\)};
\node(b5) at (2.5,-0.75){};
\node(b6) at (2.5, 0.25){};
\node(b7) at (2.5,1.25){};
\node(b8) at (1.25,2.5){};
\node(b9) at (0.25,2.5){};
\node(b10) at (-0.75,2.5){\(\circ\)};
\draw[->](g1)--node[below]{\(\beta^{0}_{3}\)}(b1);
\draw[->](b1)--node[left]{\(\beta^{0}_{2}\)}(b2);
\draw[->](b2)--node[above]{\(\beta^{0}_{1}\)}(g3);
\draw[->](g3)--node[right]{\(\beta^{1}_{3}\)}(b3);
\draw[->](b3)--node[below]{\(\beta^{1}_{2}\)}(b4);
\draw[->](b4)--node[left]{\(\beta^{1}_{1}\)}(g5);
\draw[->](g5)--node[right, near start]{\(\beta^{2}_{3}\)}(b5);
\draw[dotted](b5)--(b6);
\draw[dotted](b7)--(b8);
\draw[dotted](b9)--(b10);
\draw[->](b10)--node[right]{\(\beta^{s-1}_{1}\)}(g1);
\node(a1) at (-3.5,1.5){\(\circ\)};
\node(a2) at (-3.5,0.75){};
\node(a3) at (-3.5,-0.75){};
\node(a4) at (-3.5,-1.5){\(\circ\)};
\node(a5) at (-1.5,-3.5){\(\circ\)};
\node(a6) at (-0.75,-3.5){};
\node(a7) at (0.75,-3.5){};
\node(a8) at (1.5,-3.5){\(\circ\)};
\node(a9) at (3.5, -1.5){\(\circ\)};
\node(a10) at (3.5,-0.75){};
\node(a11) at (3.5, 0.25){};
\node(a12) at (0.25, 3.5){};
\node(a13) at (-0.75,3.5){};
\node(a14) at (-1.5,3.5){\(\circ\)};
\draw[->](g1)--node[above]{\(\alpha^{0}_{n-3}\)}(a1);
\draw[->](a1)--node[left]{\(\alpha^{0}_{n-4}\)}(a2);
\draw[dotted](a2)--(a3);
\draw[->](a3)--node[left]{\(\alpha^{0}_{2}\)}(a4);
\draw[->](a4)--node[below]{\(\alpha^{0}_{1}\)}(g3);
\draw[->](g3)--node[left]{\(\alpha^{1}_{n-3}\)}(a5);
\draw[->](a5)--node[below]{\(\alpha^{1}_{n-4}\)}(a6);
\draw[dotted](a6)--(a7);
\draw[->](a7)--node[below]{\(\alpha^{1}_{2}\)}(a8);
\draw[->](a8)--node[right]{\(\alpha^{1}_{1}\)}(g5);
\draw[->](g5)--node[below]{\(\alpha^{2}_{n-3}\)}(a9);
\draw[->](a9)--node[right]{\(\alpha^{2}_{n-4}\)}(a10);
\draw[dotted](a10)--(a11);
\draw[dotted](a12)--(a13);
\draw[->](a13)--node[above]{\(\alpha^{s-1}_{2}\)}(a14);
\draw[->](a14)--node[left]{\(\alpha^{s-1}_{1}\)}(g1);
\end{tikzpicture}\caption{Type \((\E_n,s)\)}\label{koggerE}
\end{figure}

\subsection{Type \((\E_n,s,1)\)}\label{E678}

\begin{definition}\label{E6s1}
The representative of self-injective algebras of type \((\E_n,s,1)\) is given by the path algebra \(E_{n,s,1}:=kQ/I\) where \(Q\) is the quiver of figure \ref{koggerE} and the ideal \(I\) is generated by the following set of relations:
\begin{enumerate}
\item \(\alpha^i_1\alpha^i_2\cdots\alpha^i_{n-3}=\beta^i_1\beta^i_2\beta^i_3=\gamma^i_1\gamma^i_2\) for all \(i\in\{0,\ldots,s-1\}\);
\item For all \(i\in\{0,\ldots,s-1\}=\mathbb Z/\langle s \rangle\),
\begin{align*}
\beta^{i+1}_3\alpha^i_1&=0,\; &\gamma^{i+1}_2\alpha^i_1&=0,\\
\alpha^{i+1}_{n-3}\beta^i_1&=0,\; &\gamma_2^{i+1}\beta^i_1&=0,\\
\alpha^{i+1}_{n-3}\gamma^i_1&=0,\; &\beta^{i+1}_3\gamma^i_1&=0, \text{ and}
\end{align*}
\item \(\alpha\)-paths of length \(n-2\) are equal to 0, \(\beta\)-paths of length \(4\) are equal to 0 and \(\gamma\)-paths of length \(3\) are equal to 0.
\end{enumerate}
\end{definition}

\begin{example}
\begin{figure}[ht]
\centering
\begin{tikzpicture}
\node(1) at (0,0)[inner sep=1 pt]{\(1\)};
\node(2) at (5,1.5)[inner sep=1 pt]{\(2\)};
\node(3) at (5,-1.5)[inner sep=1 pt]{\(3\)};
\node(4) at (4,.8)[inner sep=1 pt]{\(4\)};
\node(5) at (4,-.8)[inner sep=1 pt]{\(5\)};
\node(6) at (2,0)[inner sep=1 pt]{\(6\)};

\draw[->] (1) .. controls (1.5,1.75) and (4.5,1.75) .. node[above]{\footnotesize \(\alpha_3\)}(2);
\draw[->] (2) .. controls (6.25,0.75) and (6.25,-0.75) ..node[right ]{\footnotesize \(\alpha_2\)}(3);
\draw[->] (3) .. controls (4.5,-1.75) and (1.5,-1.75) .. node[below]{\footnotesize \(\alpha_1\)}(1);

\draw[->] (1) .. controls (1,1.25) and (3,1.25) .. node[below, near end]{\footnotesize\(\beta_3\)}(4);
\draw[->] (4) .. controls (4.8, .4)  and (4.8, -.4) .. node[left]{\footnotesize\(\beta_2\)} (5);
\draw[->] (5) .. controls (3,-1.25) and (1,-1.25) .. node[above, near start]{\footnotesize\(\beta_1\)}(1);

\draw[->] (1) .. controls (0.5,0.5) and (1.5,.5)  .. node[above,very near end]{\footnotesize \(\gamma_2\)}(6);
\draw[->] (6) .. controls (1.5,-0.5) and (.5,-.5)  .. node[below,very  near start]{\footnotesize \(\gamma_1\)}(1);
\end{tikzpicture}\caption{Quiver of \(E_{6,1,n}\) for \(n=1,2\)}\label{ARE_6,1}
\end{figure}
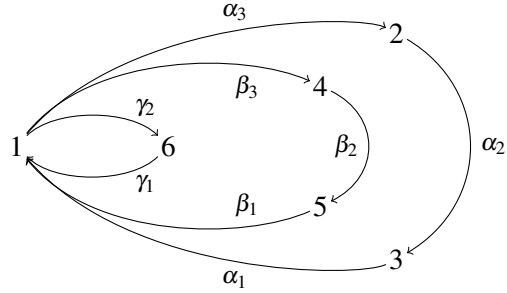
Let \(n=6\) and \(s=1\). The algebra \(E_{6,1,1}\) is given by the quiver in figure \ref{ARE_6,1}, together with the relations 
\[\alpha_1\alpha_2\alpha_3=\beta_1\beta_2\beta_3=\gamma_1\gamma_2\]
\begin{align*}
\alpha_3\beta_1=&0 	&\alpha_3\gamma_1&=0 	&\beta_3\alpha_1&=0\\
\beta_3\gamma_1=&0 	&\gamma_2\alpha_1&=0 	&\gamma_2\beta_1&=0\\
\alpha_2\alpha_3\alpha_1\alpha_2=&0 & \beta_2\beta_3\beta_1\beta_2&=0.
\end{align*}
The AR-quiver of the module category over this algebra is given in figure \ref{ARE611}. It turns out that \(\smod E_{6,1,1}\) is triangulated equivalent to \(\mathcal D^b(k\E_6)/\tau^{11}\).
\begin{figure}[ht]
\centering
\begin{tikzpicture}[xscale=.35,yscale=-.75]
 \foreach \x in {0,...,11}
  \foreach \y in {1,3,5}
   \node (\y-\x) at (\x*3,\y) [vertex] {};
 \foreach \x in {0,...,10}
  \foreach \y in {2,4}
   \node (\y-\x) at (\x*3+1.5,\y) [vertex] {};
 \foreach \x in {0,...,10}
  \foreach \y in {3}
   \node (a-\x) at (\x*3+1.5,\y)[vertex] {};
  
 \foreach \xa/\xb in {0/1,1/2,2/3,3/4,4/5,5/6,6/7,7/8,8/9,9/10,10/11}
  \foreach \ya/\yb in {1/2,3/2,3/4,5/4}
   {
    \draw [->] (\ya-\xa) -- (\yb-\xa);
    \draw [->] (\yb-\xa) -- (\ya-\xb);
   }
 \foreach \xa/\xb in {0/1,1/2,2/3,3/4,4/5,5/6,6/7,7/8,8/9,9/10,10/11}
  \foreach \ya/\yb in {3/3}
  {
   \draw [->] (\ya-\xa) -- (a-\xa);
   \draw [->]  (a-\xa) -- (\yb-\xb);
  }

\node (P1) at (31.5, 1)[inner sep=1 pt, font=\small] {\(P_1\)};
\draw[->](3-10) -- (P1);
\draw[->](P1)  -- (3-11);
\node (P2) at (16.5, 0)[inner sep=1 pt, font=\small] {\(P_2\)};
\draw[->](1-5) -- (P2);
\draw[->](P2)  -- (1-6);
\node (P3) at (13.5, 0)[inner sep=1 pt, font=\small] {\(P_3\)};
\draw[->](1-4) -- (P3);
\draw[->](P3)  -- (1-5);
\node (P4) at (16.5, 6)[inner sep=1 pt, font=\small] {\(P_4\)};
\draw[->](5-5) -- (P4);
\draw[->](P4)  -- (5-6);
\node (P5) at (13.5, 6)[inner sep=1 pt, font=\small] {\(P_5\)};
\draw[->](5-4) -- (P5);
\draw[->](P5)  -- (5-5);
\node (P6) at (15, 2)[inner sep=1 pt, font=\small] {\(P_6\)};
\draw[->](a-4) -- (P6);
\draw[->](P6)  -- (a-5);

 \draw [dashed] (0,0.4) -- (0,5.6); 
 \draw [dashed] (33,0.4) -- (33,5.6);
  \replacevertex{(1-0)}  {[fill=white, inner sep=2pt]{\(\star\)}}
  \replacevertex{(1-11)} {[fill=white, inner sep=2pt]{\(\star\)}}
  \replacevertex{(5-0)}  {[fill=white, inner sep=2pt]{\(\circ\)}}
  \replacevertex{(5-11)} {[fill=white, inner sep=2pt]{\(\circ\)}}
\end{tikzpicture}
\caption{AR-quiver of \(\modf E_{6,1,1}\). The quiver is glued together by identifying the matching symbols on either side.}\label{ARE611}
\end{figure}
\end{example}

In general, the AR-quiver of the stable module categories of self-injective algebras of type \((\E_n,s,1)\) is isomorphic to \(\mathbb Z\E_n/\theta^{t_ns}\), where \(t_6=11\), \(t_7=17\) and \(t_8=29\).

\begin{proposition}
Let \(n=6,7,8\) and \(s\geq 1\). The categories \(\smod E_{n,s,1}\) and \(\mathcal D^b(k\E_n)/\tau^{t_ns}\) are triangle equivalent.
\end{proposition}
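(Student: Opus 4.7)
The plan is to follow the same template used for the earlier propositions in Sections \ref{typeA} and \ref{typeD}: verify Keller's hypotheses so the orbit category is triangulated, verify that \(E_{n,s,1}\) meets the hypotheses of Corollary \ref{sameARquiver}, and then invoke that corollary once the two AR-quivers are identified as translation quivers.

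First I would check that \(\mathcal D^b(k\E_n)/\tau^{t_n s}\) is triangulated. Since \(t_n s \geq 1\) for \(n=6,7,8\) and \(s\geq 1\), the autoequivalence \(\tau^{t_n s}\) on \(\mathcal D^b(k\E_n)\) is a positive power of \(\tau\), so it satisfies the two bullet points of Theorem \ref{KellerTriOrb} (the \(\tau\)-orbit of any indecomposable of \(\mathcal H = \modf k\E_n\) stays in \(\mathcal H\) for only finitely many \(i\), and every indecomposable of \(\mathcal D^b(k\E_n)\) is of the form \(\tau^a U[n]\) for \(U\in\mathcal H\) with bounded \(n\)). Hence the orbit category is triangulated and standard.

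Second, \(E_{n,s,1}\) is a basic self-injective representation-finite algebra of type \((\E_n, s, 1)\), so by Theorem \ref{standardRFS} it is standard and in particular Frobenius, which makes \(\smod E_{n,s,1}\) a finite, algebraic, standard triangulated category in the sense of Corollary \ref{sameARquiver}.

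The main (and essentially only non-bookkeeping) step is the identification of AR-quivers as translation quivers. The AR-quiver of \(\smod E_{n,s,1}\) was recalled in the paragraph before the statement to be \(\mathbb Z\E_n/\langle\theta^{t_n s}\rangle\). On the other hand, by the discussion in Section 3, the action of \(\tau\) on the AR-quiver of \(\mathcal D^b(k\E_n)\) is given by \(\theta\) on \((\mathbb Z\E_n,\theta)\), so the AR-quiver of \(\mathcal D^b(k\E_n)/\tau^{t_n s}\) is \((\mathbb Z\E_n,\theta)/\langle\theta^{t_n s}\rangle\). One just has to check that \(\langle\theta^{t_n s}\rangle\) is admissible, which is immediate since \(\theta^{t_n s}\) has no fixed vertex in \(\mathbb Z\E_n\) and its orbits avoid \(\{x\}\cup x^\pm\) (the mesh relation is local while \(t_n s\) shifts by more than the diameter of a mesh). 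Thus the two translation quivers are canonically isomorphic, and Corollary \ref{sameARquiver} yields the triangle equivalence \(\smod E_{n,s,1} \simeq \mathcal D^b(k\E_n)/\tau^{t_n s}\). There is no real obstacle here, since the non-trivial input (Amiot's theorem and the computation of the AR-quiver of the algebra due to Riedtmann/Asashiba) has already been absorbed into the earlier sections.
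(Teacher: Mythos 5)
Your proposal is correct and follows essentially the same route as the paper: verify Keller's hypotheses for \(\tau^{t_n s}\), note that \(E_{n,s,1}\) is representation-finite, self-injective, basic and standard by Theorem \ref{standardRFS}, and conclude via Corollary \ref{sameARquiver} after matching the AR-quivers, both of which are \(\mathbb Z\E_n/\langle\theta^{t_n s}\rangle\). The only difference is that you spell out the translation-quiver identification and the admissibility of \(\langle\theta^{t_n s}\rangle\), which the paper leaves implicit in the preceding paragraph and in Section 3.
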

\begin{proof}
Since \(t_ns>0\), the functor \(\tau^{t_ns}\) satisfies the conditions of theorem \ref{KellerTriOrb}, hence \(\mathcal D^b(k\E_n)/\tau^{t_ns}\) is a triangulated category.
The algebra \(E_{n,s,1}\) is a representation-finite, self-injective, basic algebra, whose stable module category is standard by theorem \ref{standardRFS}. The conclusion follows from Corollary \ref{sameARquiver}.
\end{proof}

\subsection{Type \((\E_6,s,2)\)}\label{E6twist}
\begin{definition}
The representative of self-injective algebras of type \((\E_6,s,2)\) is given by the path algebra \(E_{6,s,2}:=kQ/I\) where \(Q\) is the quiver of figure \ref{koggerE} and the ideal \(I\) is generated by the following set of relations:
\begin{enumerate}

\item \(\alpha^i_1\alpha^i_2\cdots\alpha^i_{n-3}=\beta^i_1\beta^i_2\beta^i_3=\gamma^i_1\gamma^i_2\) for all \(i\in\{0,\ldots,s-1\}\);
\item For all \(i\in\{0,\ldots,s-1\}=\mathbb Z/\langle s\rangle,\)
\begin{align*}
\alpha^{i+1}_3\gamma^i_1&=0,\; & \beta^{i+1}_3\gamma^i_1&=0,\\
\gamma^{i+1}_2\alpha^i_1&=0, \; & \gamma^{i+1}_2\beta^i_1&=0,
\end{align*}
and for all \(i\in\{0,\ldots,s-2\}\),
\begin{align*}
\beta^{i+1}_3\alpha^i_1=&0,\; &\alpha^{i+1}_3\beta^i_1&=0,\\
\alpha^0_3\alpha^{s-1}_1=&0,\; & \beta^0_3\beta^{s-1}_1&=0, \text{ and}
\end{align*}
\item\(\gamma\)-paths of length 3 are equal to 0, and for all \(i\in\{0,\ldots,s-2\}\) and for all \(j\in\{1,2,3\}=\mathbb Z/\langle 3\rangle\),
\begin{align*}
 \alpha^{i+1}_{j-3}\cdots\alpha^i_j&=0,\; &\beta^{i+1}_{j-3}\cdots\beta^i_j&=0,\\
 \beta^0_{j-3}\cdots\beta^0_3\alpha^{s-1}_1\cdots\alpha^{s-1}_j&=0,\; &\alpha^0_{j-3}\cdots\alpha^0_3\beta^{s-1}_1\cdots\beta^{s-1}_j&=0.
\end{align*}
\end{enumerate}
\end{definition}

\begin{example}
Let \(n=6\) and \(s=1\). The algebra \(E_{6,1,2}\) is given by the quiver in figure \ref{ARE_6,1}, together with the relations 
\[\alpha_1\alpha_2\alpha_3=\beta_1\beta_2\beta_3=\gamma_1\gamma_2\]
\begin{align*}
\alpha_3\alpha_1=&0 	&\alpha_3\gamma_1&=0 	&\beta_3\beta_1&=0\\
\beta_3\gamma_1=&0 	&\gamma_2\alpha_1&=0 	&\gamma_2\beta_1&=0\\
\alpha_2\alpha_3\beta_1\beta_2=&0 & \beta_2\beta_3\alpha_1\alpha_2&=0 
\end{align*}
The AR-quiver of the module category over this algebra is given in figure \ref{ARE612}. It turns out that \(\smod E_{6,1,2}\) is triangulated equivalent to \(\mathcal D^b(k\E_6)/\tau^{11}\phi\).
\begin{figure}[ht]
\centering
\begin{tikzpicture}[xscale=.35,yscale=-.75]
 \foreach \x in {0,...,11}
  \foreach \y in {1,3,5}
   \node (\y-\x) at (\x*3,\y) [vertex] {};
 \foreach \x in {0,...,10}
  \foreach \y in {2,4}
   \node (\y-\x) at (\x*3+1.5,\y) [vertex] {};
 \foreach \x in {0,...,10}
  \foreach \y in {3}
   \node (a-\x) at (\x*3+1.5,\y)[vertex] {};
  
 \foreach \xa/\xb in {0/1,1/2,2/3,3/4,4/5,5/6,6/7,7/8,8/9,9/10,10/11}
  \foreach \ya/\yb in {1/2,3/2,3/4,5/4}
   {
    \draw [->] (\ya-\xa) -- (\yb-\xa);
    \draw [->] (\yb-\xa) -- (\ya-\xb);
   }
 \foreach \xa/\xb in {0/1,1/2,2/3,3/4,4/5,5/6,6/7,7/8,8/9,9/10,10/11}
  \foreach \ya/\yb in {3/3}
  {
   \draw [->] (\ya-\xa) -- (a-\xa);
   \draw [->]  (a-\xa) -- (\yb-\xb);
  }

\node (P1) at (31.5, 1)[inner sep=1 pt, font=\small] {\(P_1\)};
\draw[->](3-10) -- (P1);
\draw[->](P1)  -- (3-11);
\node (P2) at (16.5, 0)[inner sep=1 pt, font=\small] {\(P_2\)};
\draw[->](1-5) -- (P2);
\draw[->](P2)  -- (1-6);
\node (P3) at (13.5, 0)[inner sep=1 pt, font=\small] {\(P_3\)};
\draw[->](1-4) -- (P3);
\draw[->](P3)  -- (1-5);
\node (P4) at (16.5, 6)[inner sep=1 pt, font=\small] {\(P_4\)};
\draw[->](5-5) -- (P4);
\draw[->](P4)  -- (5-6);
\node (P5) at (13.5, 6)[inner sep=1 pt, font=\small] {\(P_5\)};
\draw[->](5-4) -- (P5);
\draw[->](P5)  -- (5-5);
\node (P6) at (15, 2)[inner sep=1 pt, font=\small] {\(P_6\)};
\draw[->](a-4) -- (P6);
\draw[->](P6)  -- (a-5);

 \draw [dashed] (0,0.4) -- (0,5.6); 
 \draw [dashed] (33,0.4) -- (33,5.6);
  \replacevertex{(1-0)}{[fill=white, inner sep=2pt]{\(\star\)}}
  \replacevertex{(5-11)}{[fill=white, inner sep=2pt]{\(\star\)}}
  \replacevertex{(5-0)}{[fill=white, inner sep=2pt]{\(\circ\)}}
  \replacevertex{(1-11)}{[fill=white, inner sep=2pt]{\(\circ\)}}
\end{tikzpicture}
\caption{AR-quiver of \(\modf E_{6,1,2}\). The quiver is glued together by identifying the matching symbols on either side.}
\label{ARE612}
\end{figure}
\end{example}

In general, the AR-quiver of the stable module categories of self-injective algebras of type \((\E_6,s,2)\) is isomorphic to \(\mathbb Z\E_6/\theta^{11s}\phi\), where \(\phi\) is described in Table \ref{tableOfS}.

\begin{proposition}
Let \(s\geq 1\). The categories \(\smod E_{6,s,2}\) and \(\mathcal D^b(k\E_6)/\tau^{11s}\phi\) are triangle equivalent.
\end{proposition}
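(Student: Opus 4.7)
The proof will follow exactly the template established in the preceding propositions of Sections \ref{typeA}--\ref{typeE}, so I will lay out the three standard ingredients and indicate which one carries the genuine content.

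First I would check that the autoequivalence \(F = \tau^{11s}\phi\) on \(\Db(k\E_6)\) satisfies the hypotheses of Theorem \ref{KellerTriOrb}. Since \(\phi = \tau^{6}[1]\) by Definition \ref{Eflip}, we have \(F = \tau^{11s+6}[1]\), which is a standard autoequivalence. Given that \(s \geq 1\), the orbit of any indecomposable of \(\modf k\E_6\) under \(F\) meets \(\modf k\E_6\) in only finitely many objects, and every \(F\)-orbit of an indecomposable of \(\Db(k\E_6)\) contains an object of the form \(U[n]\) with \(0 \leq n \leq 1\) and \(U\) indecomposable in \(\modf k\E_6\). Hence \(\Db(k\E_6)/\tau^{11s}\phi\) is naturally a triangulated category.

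Next, \(E_{6,s,2}\) is by construction a representation-finite, self-injective, basic algebra, and it has type \((\E_6, s, 2)\) in Asashiba's classification, so its stable module category is standard by Theorem \ref{standardRFS}. In particular \(\smod E_{6,s,2}\) is a finite, standard, algebraic triangulated category (algebraicity follows from self-injectivity, which makes the module category Frobenius).

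The main step is the verification that the AR-quiver of \(\smod E_{6,s,2}\) and the AR-quiver of \(\Db(k\E_6)/\tau^{11s}\phi\) are isomorphic as translation quivers. By the discussion preceding the proposition, the AR-quiver of \(\smod E_{6,s,2}\) is \(\Z\E_6/\langle\theta^{11s}\xi\rangle\) with \(\xi\) as in Table \ref{tableOfS}, and by the remarks in Section 3 the AR-quiver of \(\Db(k\E_6)/\tau^{11s}\phi\) is \((\Z\E_6,\theta)/\langle\theta^{11s}S\rangle\) where \(S\) is the automorphism induced by \([1]\); since \(\phi = \tau^6[1]\) acts as \(\theta^{-6}S = \xi\) on \((\Z\E_6,\theta)\) (using \(S = \xi\theta^{-6}\) from Table \ref{tableOfS}), the two cyclic groups of automorphisms coincide, giving the required isomorphism of stable translation quivers. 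The desired triangle equivalence is then immediate from Corollary \ref{sameARquiver}.

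The one point that requires genuine attention — and which I expect to be the main obstacle — is confirming that the automorphism of \((\Z\E_6,\theta)\) generated by the action of \(\tau^{11s}\phi\) really coincides with the one generated by \(\theta^{11s}\xi\), i.e. that the twist in the relations of \(E_{6,s,2}\) (the asymmetric zero-relations linking the \(\alpha\)- and \(\beta\)-orbits around the cycle index \(i = s-1\)) produces exactly the \(\xi\)-flip in the AR-quiver, matching the example computed above for \((\E_6,1,2)\). This is essentially the content of Asashiba's classification combined with Riedtmann's description of the AR-quiver, and once it is acknowledged, the conclusion of the proposition follows formally by Corollary \ref{sameARquiver}.
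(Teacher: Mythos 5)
Your proof follows the paper's argument exactly: check Keller's hypotheses for \(\tau^{11s}\phi\), note that \(E_{6,s,2}\) is a representation-finite, self-injective, basic algebra whose stable category is standard by Theorem \ref{standardRFS}, and conclude via Corollary \ref{sameARquiver}; your explicit verification that \(\tau^{11s}\phi\) induces \(\theta^{11s}\xi\) on \((\Z\E_6,\theta)\) is precisely the content the paper leaves implicit. (One small slip: \(\tau^6[1]\) acts as \(\theta^{6}S=\theta^{6}\xi\theta^{-6}=\xi\), not \(\theta^{-6}S\); the conclusion is unaffected.)
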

\begin{proof}
Since \(11s>0\), the functor \(\tau^{11s}\phi\) satisfies the conditions of theorem \ref{KellerTriOrb}, hence \(\mathcal D^b(k\E_6)/\tau^{11s}\phi\) is a triangulated category.
The algebra \(E_{6,s,2}\) is a representation-finite, self-injective, basic algebra, whose stable module category is standard by theorem \ref{standardRFS}. The conclusion follows from Corollary \ref{sameARquiver}.
\end{proof}

\section{Summary}

From the propositions of Sections 6, 7 and 8 it is clear that all self-injective standard algebras of finite representation type are stably triangle equivalent to orbit categories of the form \(\mathcal D^b(k\Delta_r)/\tau^w\phi^i\) where \(i\in\{0,1\}\) and \(\phi\) is the functor described in definition \ref{Aflip} for type \(\A\), definition \ref{Dflip} and \ref{Dtwist} for \(\D\) and definition \ref{Eflip} for type \(\E_6\). However not all triangulated orbit categories of the form \(\mathcal D^b(k\Delta_r)/F \) are equivalent to a stable module category of a representation finite self-injective algebra. We therefore sum up our findings in a table below, aiming at a way to easily look up if a certain orbit category is in fact equivalent or not to a stable module category of a self-injective algebra. 

Recall that given a functor of the form \(F=\tau^m[n]\) on \(\mathcal D^b(k\Delta_r)\), it can be expressed on the form \(F=\tau^w\phi^i\) using the Coxeter relation for \(\Delta_r\), and the above-mentioned definitions of \(\phi\). Thus the same autoequivalence, and hence orbit category, can be expressed in many different ways.

The following theorem sums up our results.
\begin{theorem}
Let \(\Delta\) be a Dynkin diagram and let \(\Phi\) be an autoequivalence such that \(\Db(k\Delta)/\Phi\) is triangulated. Let \(\Lambda\) a self-injective algebra.
The orbit category \(\mathcal C=\Db(k\Delta)/\Phi\) is triangle equivalent to \(\smod \Lambda\) exactly in the cases described in table  \ref{SummaryTable}.
\begin{table}[ht]
\begin{center}
\caption{The cases, up to triangulated equivalence, where \(\mathcal C=\Db(k\Delta)/\Phi\) is triangle equivalent to \(\smod \Lambda\).}
\label{SummaryTable}
\begin{tabular}{|>{$}l<{$}>{$}l<{$}|l|l|}
\hline
\multicolumn{2}{|l|}{\(\mathcal{C}\)} 		& \(\Lambda\) & Sec.\!\\\hline
\Db(k\A_r)/\tau^w			&r\geq 1, w\geq 1 												& Nakayama alg.\ \(N_{w,r+1}\)\! & \ref{Nakayama}\\\hline
\Db(k\A_r)/\tau^w\phi	 	&\begin{aligned}r&=2l+1,\, l\geq 1 \\ w&=rv,\, r\geq 1\end{aligned} 	& M\"obius alg.\ \(M_{l,v}\) 	&\ref{Mobius} \\\hline
\Db(k\D_r)/\tau^w	 		&r\geq 4, w=s(2r-3), s\geq 1 				& \(D_{n,s,1}\) 		& \ref{Drot1} \\\hline
\Db(k\D_r)/\tau^w\phi	 	&r\geq 4, w=s(2r-3), s\geq 1				& \(D_{n,s,2}\)		& \ref{Drot2} \\\hline
\Db(k\D_4)/\tau^{5w}\rho 	&w\geq 1									& \(D_{4,s,3}\)		& \ref{Drot3} \\\hline
\Db(k\D_r)/\tau^w	 		&\begin{aligned}r&=3m, m\geq 2\\ w&=s(2r-3)/3, s\geq 1, 3\nmid s\end{aligned}  \!	& \(D_{3m,\frac{s}{3},1}\) & \ref{D3m}\\\hline
\Db(k\E_r)/\tau^w	 		&\begin{aligned}	
						r=6 \text{ and }& w=11s \\ 
						r=7 \text{ and }& w=17s  \hspace{5pt}s\geq 1 \\
						r=8 \text{ and }& w=29s \\ \end{aligned} 	 & \(E_{r, s, 1}\) & \ref{E678}\\\hline
\Db(k\E_6)/\tau^w\phi 		& w=11s, s\geq 1 & \(E_{6,s,2}\) 	& \ref{E6twist} \\\hline
\end{tabular}
\end{center}
\end{table}
\end{theorem}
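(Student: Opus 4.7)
The proof is essentially an assembly of what has been established in Sections \ref{typeA}--\ref{typeE}, together with Asashiba's classification and Amiot's theorem. For the ``if'' direction I would simply cite the relevant proposition from each of Sections \ref{Nakayama}, \ref{Mobius}, \ref{Drot1}, \ref{Drot2}, \ref{Drot3}, \ref{D3m}, \ref{E678}, \ref{E6twist}: each row of Table \ref{SummaryTable} records the content of one such proposition, which has already produced an explicit triangle equivalence \(\Db(k\Delta)/\Phi \simeq \smod \Lambda\) via Corollary \ref{sameARquiver}. So the ``if'' direction requires no new argument.

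For the ``only if'' direction, suppose \(\mathcal C = \Db(k\Delta)/\Phi\) is triangulated and triangle equivalent to \(\smod\Lambda\) for some self-injective algebra \(\Lambda\). Since \(\mathcal C\) is standard (as observed in the introduction, orbit categories are standard and a standard category cannot be equivalent to a non-standard one), \(\smod \Lambda\) is standard and thus its type \((\Delta(\Lambda), f(\Lambda), t(\Lambda))\) appears in the list of Theorem \ref{standardRFS}. For each of the eight families in that list the authors have in Sections \ref{typeA}--\ref{typeE} chosen an explicit representative algebra and described its AR-quiver as \(\Z\Delta/\langle \phi\tau^{-r}\rangle\) using the automorphisms of \((\Z\Delta,\theta)\) catalogued in Table \ref{tableOfS} and Definitions \ref{Aflip}, \ref{Dflip}, \ref{Dtwist}, \ref{Eflip}. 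Matching each family against Asashiba's list gives exactly the eight rows of Table \ref{SummaryTable}, so the table is exhaustive.

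The only remaining point is the canonical form of the autoequivalence \(\Phi\). By Theorem \ref{Picard} every autoequivalence of \(\Db(k\Delta)\) comes from an automorphism of \((\Z\Delta,\theta)\), and using the Coxeter relation \([2]\cong\tau^{-h}\) in \(\Db(k\Delta)\) any such autoequivalence of the form \(\tau^m[n]\) can be normalized to \(\tau^w\phi^i\) with \(i\in\{0,1\}\) and \(\phi\) one of the flip/triality functors defined in Sections \ref{typeA}--\ref{typeE}; these normal forms are the ones listed in Table \ref{SummaryTable}. The step I expect to require the most care is verifying that no row is missing and none is redundant: in particular, the \(D_{3m,s/3,1}\) row requires the divisibility condition \(3\nmid s\) to avoid overlap with the \(D_{n,s,1}\) row, and the \(\D_4\) triality row involves the exceptional automorphism \(\chi\) of \((\Z\D_4,\theta)\) that exists only because \(\D_4\) has an automorphism of order three. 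Once these combinatorial sanity checks are carried out, Corollary \ref{sameARquiver} upgrades the translation-quiver identification to a triangle equivalence and the theorem follows.
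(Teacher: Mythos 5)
Your proposal is correct and follows essentially the same route as the paper: the ``if'' direction is the collection of propositions from Sections \ref{Nakayama}--\ref{E6twist}, and the ``only if'' direction rests on standardness of the orbit category plus the exhaustiveness of Asashiba's list in Theorem \ref{standardRFS}, with the Coxeter relation used to normalize \(\Phi\) to the form \(\tau^w\phi^i\). The paper in fact states the theorem as a summary without a separate proof environment, so your reconstruction matches its intended argument.
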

\subsection*{Acknowledgements}
The authors would like to thank Steffen Oppermann and Aslak Bakke Buan for helpful comments during the initial writing, and the anonymous referee for very useful feedback on the paper. 

\bibliographystyle{plain}
\end{document}